\def\titlerunning#1{\gdef\titrun{#1}}
\def\author#1{\gdef\autrun{\def\and{\unskip, }#1}\gdef\@author{#1}}
\def\address#1{{\def\and{\\\hspace*{18pt}}\renewcommand{\thefootnote}{}%
		\footnote {#1}}%
	\markboth{\autrun}{\titrun}}
\def\email#1{e-mail: #1}
\def\keywords#1{\par\medskip
	\noindent\textbf{Keywords.} #1}
\@date \else {\vskip3ex \centering\footnotesize\@date\par\vskip1ex}\fi
\else \@footnotetext{\@setdate}\fi}
\newtheorem{thm}{Theorem}[section]
\newtheorem{cor}[thm]{Corollary}
\newtheorem{lemma}[thm]{Lemma}
\newcommand{\R}{{\mathbb{R}}}
\newcommand{\N}{{\mathbb{N}}}
\newcommand{\esssup}{{\mathrm{ess}\sup}}
\newcommand{\essinf}{{\mathrm{ess}\inf}}
\newcommand{\vp}{\varphi}
\newcommand{\osc}{\operatornamewithlimits{osc}}
\newcommand{\La}{\triangle}
\newcommand{\bs}{\backslash}
\begin{document}

\baselineskip=17pt
	
\titlerunning{}

\title{System of Porous Medium Equations }

\author{ Sunghoon Kim  
	\and Ki-Ahm Lee }

\date{}

\maketitle

\address{
	S. Kim (\Letter) :
	Department of Mathematics, School of Natural Sciences, The Catholic University of Korea,\\
	43 Jibong-ro, Wonmi-gu, Bucheon-si, Gyeonggi-do, 14662, Republic of Korea ;\\
	\email{math.s.kim@catholic.ac.kr}
	\and
	Ki-Ahm Lee :
	Department of Mathematical Sciences, Seoul National University, Gwanak-ro 1, Gwanak-Gu, Seoul, 08826, South Korea \&
	Korea Institute for Advanced Study, Seoul 02455, Korea;\\
	\email{ kiahm@snu.ac.kr }
}

\begin{abstract}
We investigate the evolution of population density vector, $\bold{u}=\left(u^1,\cdots,u^k\right)$, of $k$-species whose diffusion is controlled by its absolute value $\left|\bold{u}\right|$. More precisely we study the properties and asymptotic large time behaviour of solution $\bold{u}=\left(u^1,\cdots,u^k\right)$ of degenerate parabolic system
\begin{equation*}
\left(u^i\right)_t=\nabla\cdot\left(\left|\bold{u}\right|^{m-1}\nabla u^i\right) \qquad \mbox{for $m>1$ and $i=1,\cdots,k$}.
\end{equation*}
Under some regularity assumption, we prove that the function $u^i$ which describes the population density of $i$-th species with population $M_i$ converges to $\frac{M_i}{\left|\bold{M}\right|}\mathcal{B}_{\left|\bold{M}\right|}$ in space with two different approaches where $\mathcal{B}_{\left|\bold{M}\right|}$ is the Barenblatt solution of the porous medium equation with $L^1$-mass $\left|\bold{M}\right|=\sqrt{M_1^2+\cdots+M_k^2}$.\\
\indent As an application of the asymptotic behaviour, we establish a suitable Harnack type inequality which makes the spatial average of $u^i$ under control by the value of $u^i$ at one point.  We also find an 1-directional travelling wave type solutions and the properties of  solutions which has travelling wave behaviour at infinity.
\keywords{Degenerate Parabolic System, Asymptotic Behaviour, Harnack Type Inequality, Travelling Wave}
\end{abstract}

\setcounter{equation}{0}
\setcounter{section}{0}

\section{Introduction}\label{section-intro}
\setcounter{equation}{0}
\setcounter{thm}{0}

\indent In this paper we will investigate the properties and asymptotic behaviour of degenerate parabolic system which is formulated by population densities or distributions of different species in a closed system when they interact each other. More precisely, let $k\in\N$ be the number of different species and let $u^i\geq 0$, $\left(i=1,\cdots,k\right)$ represents the density of population of $i$-th species in that system. We consider the case that the evolution of $u^i$ is governed by some quantity depending on population densities of all species given as follow:
\begin{equation*}
\left|\bold{u}\right|=\sqrt{\sum_{i=1}^k\left(u^i\right)^2} \qquad \qquad \left(\,\bold{u}=\left(u^1,\cdots,u^k\right)\,\right)
\end{equation*}
\indent Now let us introduce the main system of this work which describes the evolution of the density function $u^i$:
\begin{equation}\label{eq-main-equation-of-system}\tag{SPME}
\begin{cases}
\begin{aligned}
\left(u^i\right)_t&=\nabla\left(m\left|\bold{u}\right|^{m-1}\nabla u^i\right) \qquad \mbox{in $\R^n\times\left(0,\infty\right)$}\\
u^i(x,0)&=u^i_0(x) \qquad \qquad \qquad \forall x\in\R^n
\end{aligned}
\end{cases}
\end{equation}
in the range of $m>1$, with initial data $u^i_0$ nonnegative, integrable and compactly supported.\\
\indent This  study of degenerate parabolic system has been motivated  by non-Newtonian fluids such as molten plastics, polymer and biological fluid, where the diffusion coefficient depends on the norm of the symmetric part of $\nabla u$. But they are expected to share similar properties including waiting time effects and methods for study as scalar Porous Medium Equations (shortly, PME) and parabolic p-Lpalace equation do. And a mixture of  isentropic gases through a porous
medium is expected to follow the system of PME.\\
\indent  There could be some possible candidates for the system of PME. At the first stage, we considered the evolution of population density of each species which is controlled by total population of all species in the same system, i.e., in \cite{KL1} we studied the local continuity and asymptotic behavior of solution $\bold{u}=\left(u^1,\cdots,u^k\right)$ of the following degenerated parabolic system
\begin{equation}\label{eq-main-eq-of-our-previous-work-govened-by-totall-populations}
\left(u^i\right)_t=\nabla\cdot\left(m\,U^{m-1}\nabla u^i\right) \qquad \mbox{for $m>1$ and $i=1,\cdot, k$}
\end{equation}
where the diffusion coefficients are given by total population density, $U=u^1+\cdots+u^k$. Recently, Kuusi, Monsaingeon and Videman \cite{KMV} showed the existence of a unique weak solution of \eqref{eq-main-eq-of-our-previous-work-govened-by-totall-populations} and derive regularity estimates under some assumptions.\\
\indent In this paper, we are going to investigate the properties of solution $\bold{u}=\left(u^1,\cdots,u^k\right)$ of \eqref{eq-main-equation-of-system}, whose diffusion coefficient depends on the norm of $u$  as that of non-Newtonian fluids depends on the norm of the symmetric part of $\nabla u$. \\
\indent As given above, the diffusion coefficients of the equation in \eqref{eq-main-equation-of-system} are composed of population densities of all species. Thus the solution of \eqref{eq-main-equation-of-system} could not diffuse in a way determined by solution itself, i.e., interaction between all species will affect each other and may make their population densities more and more similar as time goes. The interaction between species creates serious difficulties which are absent for the equation for a single species.\\  
\indent One of main difficulties in studying the properties and asymptotic behavior of \eqref{eq-main-equation-of-system} is the absence of comparison principle between corresponding components of two different solutions, not between components in one solution, i.e., for two solutions $\bold{u}=\left(u^1,\cdots,u^k\right)$ and $\bold{v}=\left(v^1,\cdots,v^k\right)$ of \eqref{eq-main-equation-of-system}, we could not guarantee
\begin{equation*}
u^i(x,t)\geq v^i(x,t) \qquad \mbox{for some $i\in\left\{1,\cdots,k\right\}$},
\end{equation*} 
even though all components of $\bold{u}$ are greater than corresponding components of $\bold{v}$ initially:
\begin{equation*}
u_0^i(x,t)\geq v_0^i(x,t) \qquad \forall 1\leq i\leq k.
\end{equation*} 
\indent In this paper, we are going to show the asymptotic large time behaviour of density function $u^i$, $\left(1\leq i\leq k\right)$, and we will also find Harnack type inequality for $u^i$, $\left(1\leq i\leq k\right)$, when all components of $\bold{u}=\left(u^1,\cdots,u^k\right)$ are equivalent to each other in the sense that
\begin{equation*}
c^{ij}u^j(x,t)\leq u^i(x,t) \leq C^{ij}u^j(x,t) \qquad \forall (x,t)\in\R^n\times[0,\infty),\,\,1\leq i,\,j\leq k
\end{equation*}
for some positive constants $C^{ij}>c^{ij}>0$.\\
\indent When $k=1$, the equation in \eqref{eq-main-equation-of-system} is called the porous medium equations or slow diffusion equation for $m>1$, heat equation for $m=1$ and fast diffusion equation for $m<1$. Large number of literatures on asymptotic behaviours and regularity theories of solutions of the fast diffusion and porous medium equations can be found. We refer the readers to the paper \cite{BBDGV, HK1, HK2, HKs, Va2} for the asymptotic behaviour, to the paper \cite{CF1, CF2, CF3, CVW, CW, DH, Di, HU, KL2} for regularity theory of the fast and slow diffusion equations.\\
\indent There are also many studies on the degenerate parabolic system with an equation similar to the one in \eqref{eq-main-equation-of-system}. In \cite{Yi}, they derived the degenerate parabolic system
\begin{equation}\label{eq-another-degenerate-parabolic-system-Laplace-type}
\bold{u}_t=\La\left(\left|\bold{u}\right|^{m-1}\bold{u}\right), \qquad m>1
\end{equation}
from a time-dependent $p$-curl system which describes Bean's critical-state model in the superconductivity theory and studied the existence and uniqueness of the global continuous solution. In \cite{ST}, authors investigated the boundedness and finite speed of propagation of solution of \eqref{eq-another-degenerate-parabolic-system-Laplace-type} i.e., they gave sharp estimates of $\left\|\,\left|\bold{u}\right|\,\right\|_{L^{\infty}}$ and the size of support of $\bold{u}$ in \eqref{eq-another-degenerate-parabolic-system-Laplace-type}.
\\
\indent The first part of this paper is started by assuming that all species are located separately from each other at first, i.e., suppose that
\begin{equation*}
\textbf{supp}\,u^i_0\,\cap \textbf{supp}\,u^j_0\,=\emptyset \qquad \forall 1\leq i<j\leq k.
\end{equation*}
Then the evolution of each species is not interfered by others on its support. Hence
\begin{equation*}
\left(u^i\right)_t=\nabla\left(m\,\left|\bold{u}\right|^{m-1}\nabla u^i\right)\qquad \Rightarrow \qquad \left(u^i\right)_t=\La \left(u^i\right)^m\qquad \mbox{on $\textbf{supp}\,u^i_0$}
\end{equation*}
for a short time, i.e., the function $u^i$ behaves like a solution of the standard porous medium equation in a short time. One of the interesting phenomenons caused by the degeneracy of porous medium equation is the finite speed of propagation. Thus there exists a waiting time $T>0$ until collision between species. The first observation of the first part is stated as follows.
\begin{thm}\label{thm-existence-of-waiting-time-if-initially-separated}
Let $\bold{u}=\left(u^1,\cdots,u^k\right)$ be a weak solution of \eqref{eq-main-equation-of-system} with initial data $u^i_0\in L^1\left(\R^n\right)\cap L^{m+1}\left(\R^n\right)$, $(1\leq i\leq k)$, nonnegative and compactly supported. Suppose that there exists a constant $d>0$ such that 
\begin{equation}\label{eq-infimum-distance-between-supports-of-u-i-s}
\textbf{dist}\left(\textbf{supp}\, u^i_0,\, \textbf{supp} \,u^j_0\right)\geq d>0 \qquad \forall 1\leq i,<j\,\leq k.
\end{equation}
Then there exists a constant $0<T\leq\infty$ such that
\begin{equation}\label{eq-waiting-time-of-mixing-of-isolated-supports}
\textbf{supp}\, u^i\left(\cdot,t\right)\cap \textbf{supp} \,u^j\left(\cdot,t\right)=\emptyset \qquad \forall 0\leq t\leq T,\,\,1\leq i<j\,\leq k.
\end{equation}
\end{thm}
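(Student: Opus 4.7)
My approach is a continuity/bootstrap argument built on the observation that, as long as the supports $\textbf{supp}\,u^i(\cdot,t)$ remain pairwise disjoint, each component $u^i$ is a solution of the scalar porous medium equation, and hence inherits its classical finite-speed-of-propagation estimate. Let $\Omega^i := \{x\in\R^n : \operatorname{dist}(x,\textbf{supp}\,u^i_0) < d/3\}$; by \eqref{eq-infimum-distance-between-supports-of-u-i-s} these $k$ open sets are pairwise disjoint. Set
\begin{equation*}
T^* := \sup\bigl\{\,s>0 : \textbf{supp}\,u^i(\cdot,t) \subset \Omega^i \text{ for every } 1\le i\le k \text{ and every } 0\le t\le s\,\bigr\}.
\end{equation*}
Using the continuity of $\bold{u}$ granted by the standing regularity assumption, $T^* > 0$; the goal is then to bound $T^*$ from below by an explicit constant depending only on $n$, $m$, $d$, and the initial data.

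The key step is to verify that on $\R^n\times[0,T^*)$ each component $u^i$ is actually a weak solution of the scalar PME $(u^i)_t = \Delta(u^i)^m$. Indeed, for $x\in\Omega^i$ the pairwise disjointness forces $u^j(x,t)=0$ for $j\neq i$, so $|\bold{u}(x,t)| = u^i(x,t)$ and the coefficient $m|\bold{u}|^{m-1}$ coincides pointwise with $m(u^i)^{m-1}$; for $x\notin\Omega^i$, $u^i$ vanishes on an open neighborhood of $x$ (since $\textbf{supp}\,u^i(\cdot,t)\subset\Omega^i$), whence both $(u^i)_t$ and the flux $m|\bold{u}|^{m-1}\nabla u^i$ are identically zero there in the weak sense. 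Thus \eqref{eq-main-equation-of-system} collapses to the scalar PME, and the standard finite-speed-of-propagation result (for instance, Barenblatt-barrier comparison as in V\'azquez's monograph) furnishes a continuous, nondecreasing function $f_i(t)$ with $f_i(0)=0$, depending only on $n$, $m$, and the $L^1\cap L^\infty$ norm of $u^i_0$, such that
\begin{equation*}
\textbf{supp}\,u^i(\cdot,t) \subset \bigl\{x : \operatorname{dist}(x,\textbf{supp}\,u^i_0) < f_i(t)\bigr\} \qquad \forall\,t\in[0,T^*).
\end{equation*}
Picking $T_0>0$ with $f_i(T_0)<d/3$ for every $i$, I conclude $T^*\ge T_0$ by contradiction: if $T^* < T_0$, the inclusion above together with continuity of $\bold{u}$ in $t$ yields $\textbf{supp}\,u^i(\cdot,T^*) \subsetneq \Omega^i$ for every $i$, and continuity then preserves the strict inclusion on a slightly larger interval $[0,T^*+\delta]$, violating the maximality of $T^*$.

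The principal obstacle is the reduction in the middle paragraph: I must justify, at the level of the weak formulation, that the system equation genuinely collapses to the scalar PME across \emph{all} of $\R^n$, not merely on $\Omega^i$. This hinges on enough regularity of $\bold{u}$ (H\"older continuity of components, or at least continuity of the support in the Hausdorff metric) for $u^i$ to vanish on an honest open neighborhood of $\partial\Omega^i$, so that the flux $m|\bold{u}|^{m-1}\nabla u^i$ is zero there in the weak sense. Because direct comparison between corresponding components of two different solutions of \eqref{eq-main-equation-of-system} is unavailable, as the introduction emphasizes, it is essential to work with a single solution whose disjointness decouples the problem into $k$ independent scalar PMEs, rather than attempting to bound $u^i$ by a supersolution extracted from a different vector-valued solution.
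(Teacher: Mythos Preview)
Your argument is correct but follows a different route from the paper. The paper bypasses the bootstrap entirely by exploiting a global structural fact established in the preliminaries: the scalar function $|\bold{u}|$ is a \emph{subsolution} of the standard porous medium equation, $|\bold{u}|_t \le \Delta |\bold{u}|^m$, independently of any disjointness hypothesis (this is inequality \eqref{eq-equation-for-bold-v-sub-equal-standard-PME}). That inequality yields finite speed of propagation for $|\bold{u}|$ directly via Lemma~\ref{eq-lem-boundedness-of-support-of-solution}. The proof then simply covers each compact set $\overline{\textbf{supp}\,u^i_0}$ by finitely many balls of radius $d/8$, notes that the corresponding $d/4$-fattenings remain pairwise at distance $\ge d/2$, and takes $T=\min_i T_i$ where $T_i$ is the time during which $\textbf{supp}\,u^i(\cdot,t)\subset\textbf{supp}\,|\bold{u}|(\cdot,t)$ stays inside the $i$-th fattened cover.

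Your approach---decouple the system into $k$ independent scalar PMEs on the interval of disjointness and close a continuity bootstrap---is self-contained and does not invoke the subsolution identity for $|\bold{u}|$, but it pays for this in two places: you need the seed $T^*>0$ (the paper only establishes interior continuity, for $t>0$, so continuity up to $t=0$ is an extra ingredient), and you must justify the weak-formulation reduction across $\partial\Omega^i$, which you rightly flag as the delicate step. The paper's route trades both issues for the single a~priori inequality on $|\bold{u}|$; conceptually, it controls all components simultaneously through the norm, while you control each $u^i$ separately and then reassemble. Either path works, but the subsolution observation makes the paper's version noticeably shorter.
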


By the first observation, touching between supports of $u^i$, $\left(1\leq i\leq k\right)$, will occur in a finite time if some conditions are imposed on the initial values such as non-degeneracy at the boundary. Question is how does the interaction between all species affect the diffusion of each species. More precisely, we'd like to know how fast two different species diffuse in each other's territory when they begin to mix. The second observation of the first part is stated as follows.

\begin{thm}\label{eq-lem-equailty-of-support-ofu-i-and-u-j-when-initially-joined}
Let $m>1$ and $t_0\geq 0$. Let $\bold{u}=\left(u^1,\cdots,u^k\right)$ be a continuous solution of
\begin{equation}\label{eq-system-of-PME-on-t_0-to-infty}
\left(u^i\right)_t=\nabla\cdot\left(m\left|\bold{u}\right|^{m-1}\nabla u^i\right) \qquad \forall (x,t)\in\R^n\times(t_0,\infty),\,\,1\leq i\leq k
\end{equation} 
with $u^i\geq 0$ for all $1\leq i\leq k$. Suppose that 
\begin{equation*}
\left|\textbf{supp}\,u^j(\cdot,t_0)\cap \textbf{supp}\,u^l(\cdot,t_0)\right|\neq 0
\end{equation*}
for some $j$, $l\in\left\{1,\cdots,k\right\}$. Then
\begin{equation}\label{eq-statements-of-equality-of-supprot-of-u-i-s-forall-i}
\textbf{supp}\,u^j(t)=\textbf{supp}\,u^{l}(t)\qquad \forall t>t_0.
\end{equation}
\end{thm}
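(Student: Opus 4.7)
My plan is to apply the parabolic strong maximum principle to $u^j$ and $u^l$, viewed as non-negative solutions of the same linear equation $w_t=\D\cdot(a\,\D w)$ with common coefficient $a(x,t):=m\left|\bold{u}(x,t)\right|^{m-1}$, on the open spacetime set $D:=\{(x,t)\in\R^n\times[t_0,\infty):\left|\bold{u}(x,t)\right|>0\}$. Because $\bold{u}$ is continuous, $D$ is relatively open in $\R^n\times[t_0,\infty)$; because $a$ is continuous and strictly positive on $D$, the equation is uniformly parabolic on every compact subset of $D$, which is precisely the setting where SMP applies. By the symmetry between $j$ and $l$, and the fact that $\textbf{supp}\,u^i(\cdot,t)$ equals the closure of $\{x:u^i(x,t)>0\}$, it suffices to prove $\{u^j(\cdot,t)>0\}\subseteq\{u^l(\cdot,t)>0\}$ for every $t>t_0$.

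First, the hypothesis $\left|\textbf{supp}\,u^j(\cdot,t_0)\cap\textbf{supp}\,u^l(\cdot,t_0)\right|\neq 0$, together with continuity of $\bold{u}$ and the definition of support as the closure of the positivity set, yields a point $x_0$, a radius $r>0$, and a constant $\varepsilon>0$ such that $u^j(x,t_0)\geq\varepsilon$ and $u^l(x,t_0)\geq\varepsilon$ on $B_r(x_0)$. Now suppose, toward a contradiction, that there exist $t_1>t_0$ and $x_1\in\R^n$ with $u^j(x_1,t_1)>0$ and $u^l(x_1,t_1)=0$. Then $(x_1,t_1)\in D$, and $u^l\geq 0$ attains its minimum value $0$ at this interior point. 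The parabolic strong maximum principle applied to $u^l$ on $D$ forces $u^l\equiv 0$ on the backward-in-time connected component of $(x_1,t_1)$ inside $D$.

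To close the argument, I need to show that this backward component reaches $B_r(x_0)\times\{t_0\}$, where the bound $u^l\geq\varepsilon$ then contradicts $u^l\equiv 0$. My approach is first to establish monotonicity of the positivity set $P(t):=\{x:\left|\bold{u}(x,t)\right|>0\}$ in time, in the form $B_r(x_0)\subseteq P(t)$ for every $t\in[t_0,t_1]$. Locally at each $x\in B_r(x_0)$ some component $u^i(x,t_0)\geq\varepsilon$, and a continuity-plus-SMP argument (applied to that component on $D$) forbids $u^i$ from later vanishing at $x$ without violating the past positivity. Once the tube $B_r(x_0)\times[t_0,t_1]\subseteq D$ is in place, one constructs a continuous time-monotone path from $(x_0,t_0)$ to $(x_1,t_1)$ lying in the closure of the backward component of $(x_1,t_1)$ in $D$ -- ascending inside the tube and then traversing horizontally inside $P(t_1)$ -- yielding the contradiction.

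The main obstacle is the connectivity ingredient: one must ensure that $x_1$ sits in the same connected component of $P(t_1)$ as the forward evolution of the tube originating from $B_r(x_0)$, so that the horizontal segment of the path stays inside $D$. This relies on finite speed of propagation together with the impossibility of new connected components of $P$ emerging from the zero region; any component of $P(t_1)$ must be the forward evolution of a component of $P(t_0)$. Modulo this topological bookkeeping, the strong maximum principle cleanly closes the proof.
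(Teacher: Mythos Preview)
Your strategy via the strong maximum principle on $D=\{|\bold{u}|>0\}$ is sound in spirit, but the connectivity obstacle you flag is genuine and is precisely what the paper's argument circumvents.  The paper does \emph{not} try to propagate information from $(x_0,t_0)$ forward to $(x_1,t_1)$.  Instead it works entirely locally at the later time: if $\textbf{supp}\,u^j(t_1)\neq\textbf{supp}\,u^l(t_1)$, one picks $x_1\in\partial\,\textbf{supp}\,u^{j}(t_1)$ with $|\bold{u}|(x_1,t_1)>0$.  By continuity, $|\bold{u}|$ is bounded away from zero on a small cylinder $B_\epsilon(x_1)\times[t_1-\epsilon,t_1]$, so the equation is uniformly parabolic there; and since $x_1$ lies on the \emph{boundary} of the support of $u^j(\cdot,t_1)$, the function $u^j$ is automatically not identically zero on this cylinder.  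One then solves the local Dirichlet problem for the same linear equation on $B_\epsilon(x_1)$ with zero lateral data and initial data $u^j(\cdot,t_1-\tfrac{\epsilon}{2})\chi_{B_\epsilon(x_1)}$; the resulting solution $v$ is strictly positive in the interior at the final time, and the comparison argument of Lemma~\ref{lem-uniquness-of-weak-solution-2} gives $u^j(x_1,t_1)\geq v(x_1,\tfrac{\epsilon}{2})>0$, contradicting $x_1\in\partial\,\textbf{supp}\,u^j(t_1)$.

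The key difference is where the ``seed'' of positivity for $u^j$ (or $u^l$) comes from.  In your argument the seed sits at $(x_0,t_0)$, so you must build a time-monotone path in $D$ all the way up to $(x_1,t_1)$; this is exactly the topological bookkeeping you identify as the main obstacle, and it is not obviously true in general (if $\textbf{supp}\,u^j(\cdot,t_0)$ has several components, only some of which meet $\textbf{supp}\,u^l(\cdot,t_0)$, the component of $P(t_1)$ containing $x_1$ need not be traceable back to $x_0$).  In the paper's argument the seed is found \emph{right next to} $(x_1,t_1)$, simply because $x_1$ sits on the boundary of $\textbf{supp}\,u^j(t_1)$, so there are nearby points where $u^j>0$.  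No path back to $t_0$ is ever needed; the whole contradiction lives in one small parabolic cylinder.  This local comparison is both shorter and removes the connectivity hypothesis you were unable to verify.
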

This extension plays an important role to explain the interaction between different species after touching of supports. \\
\indent The main result of the first part is the asymptotic large time behaviour of function $u^i$, $\left(1\leq i\leq k\right)$. In most cases on Cauchy problem, the solution of the problem loses the informations given by its initial data after a long time and evolves only by the equation of that problem. Thus, it can be expected that all of functions $u^i$, $\left(1,\cdots,k\right)$ in \eqref{eq-main-equation-of-system} behave similarly after a large time since they all are controlled by the same diffusion coefficients. More precisely, if functions $u^1$, $\cdots$, $u^k$ maintain their $L^1$-mass, we can expect that there exists a function $f$ such that
\begin{equation}\label{eq-expected-similarity-of-u-i-by-c-i-and-some-function-f}
u^i(x,t) \approx c^if(x,t) \qquad \mbox{after a large time}
\end{equation}
for some constant $c^1$, $\cdots$, $c^k$ determined by $L^1$ norm of $u^1$, $\cdots$, $u^k$, respectively. Moreover, by \eqref{eq-main-equation-of-system} and \eqref{eq-expected-similarity-of-u-i-by-c-i-and-some-function-f}, the function $f$ will satisfy
\begin{equation}\label{eq-PME-satisfied-by-f-that-is-similar-with-u-i-after-a=long=time}
f_t=\left(\sum_{i=1}^k\left(c^i\right)^2\right)^{\frac{m-1}{2}}\La f^m \qquad \mbox{after a large time}.
\end{equation}
For a constant $M>0$, let $\mathcal{B}_{M}$ be the self-similar Barenblatt solution of the porous medium equation with $L^1$-mass $M$. Then it is given explicitly by
\begin{equation}\label{eq-barenblatt-solution-of-PME}
\mathcal{B}_M(x,t)=t^{-a_1}\left(\mathcal{C}_M-\frac{a_3|x|^2}{t^{2a_2}}\right)_+^{\frac{1}{m-1}}
\end{equation} 
where the constant $\mathcal{C}_M>0$ is related to the $L^1$-mass $M$ of barenblatt solution and
\begin{equation}\label{eq-constant-alpha-1-beta-1-k}
a_1=\frac{n}{(m-1)n+2}, \qquad a_2=\frac{a_1}{n}\qquad \mbox{and} \qquad a_3=\frac{a_1(m-1)}{2mn}. 
\end{equation}
It is well known by \cite{LV, Va1} that the solution $f$ of \eqref{eq-PME-satisfied-by-f-that-is-similar-with-u-i-after-a=long=time} converges to some Barenblatt  solution $\mathcal{B}_{M}$ of the porous medium equation as $t\to\infty$. Thus we have no choice but to think that 
\begin{equation*}
u^i\approx c^if\to c^i\mathcal{B}_{M} \qquad \mbox{as $t\to\infty$}.
\end{equation*}
Under this expectation, the main result of the first part is as follows.
\begin{thm}\label{thm-asymptotic-behaviour-of-system-PME-L-1=and=L=infty}
Let $m>1$ and let $\bold{u}=\left(u^1,\cdots,u^k\right)$ be a non-negative solution of \eqref{eq-main-equation-of-system} with initial data $u^i_0$, $(1\leq i\leq k)$, integrable and compactly supported. Then
\begin{equation}\label{eq-L-1-convergence-between-u-i-and-M-i-barrenblatt-profiles}
\lim_{t\to\infty}\left\|u^i(\cdot,t)-\frac{M_i}{\left|\bold{M}\right|}\mathcal{B}_{\left|\bold{M}\right|}\left(\cdot,t\right)\right\|_{L^1}=0
\end{equation}
and
\begin{equation}\label{eq-L-infty-convergence-between-u-i-and-M-i-barrenblatt-profiles}
\lim_{t\to\infty}t^{a_1}\left|u^i(\cdot,t)-\frac{M_i}{\left|\bold{M}\right|}\mathcal{B}_{\left|\bold{M}\right|}\left(\cdot,t\right)\right|=0\qquad \mbox{uniformly on every compact subset of $\R^n$}
\end{equation}
for some constant $a_1>0$ where $\left\|u_0^i\right\|_{L^1\left(\R^n\right)}=M_i$ for $i=1,\cdots, k$ and $\bold{M}=\left(M_1,\cdots,M_k\right)$.
\end{thm}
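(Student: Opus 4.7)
The plan is the classical scaling-compactness-identification scheme of Friedman--Kamin, carried out in similarity variables so that the system's entropy structure is visible.

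First, introduce the self-similar zoom $(u_\lambda)^i(x,t):=\lambda^{a_1}u^i(\lambda^{a_2}x,\lambda t)$. Using $a_1=na_2$ and $(m-1)a_1+2a_2=1$ (both built into \eqref{eq-constant-alpha-1-beta-1-k}), $(u_\lambda)$ again solves \eqref{eq-main-equation-of-system} with rescaled initial data $\lambda^{a_1}u^i_0(\lambda^{a_2}\cdot)$ of the same mass $M_i$, concentrating weakly to $M_i\delta_0$ as $\lambda\to\infty$. Hence the theorem is equivalent to $(u_\lambda)^i(\cdot,1)\to \frac{M_i}{|\bold{M}|}\mathcal{B}_{|\bold{M}|}(\cdot,1)$ in $L^1(\R^n)$ and locally uniformly. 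I would then collect uniform estimates on $\{(u_\lambda)^i\}$: $L^1$-conservation of mass; an $L^\infty$ smoothing $\|(u_\lambda)^i(\cdot,t)\|_{L^\infty}\le C t^{-a_1}$ by Moser iteration (the diffusion coefficient $|\bold{u}|^{m-1}$ is common to every component, which permits a scalar-style argument once $L^1$ control is in hand); a uniform support bound via comparison of $|\bold{u}|$ with a Barenblatt supersolution of the scalar PME; and a uniform local H\"older modulus from the continuity theory that the theorem presupposes. Arzel\`a--Ascoli then produces $\lambda_k\to\infty$ with $(u_{\lambda_k})^i\to\bar u^i$ locally uniformly on $\R^n\times(0,\infty)$, each $\bar u^i$ being a weak solution of \eqref{eq-main-equation-of-system} of mass $M_i$ whose initial trace is $M_i\delta_0$.

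Second, to identify the limit I switch to similarity variables $(y,s)=(x/t^{a_2},\log t)$ and set $\theta^i(y,s):=t^{a_1}u^i(x,t)$; the system becomes
\begin{equation*}
\theta^i_s=\nabla_y\cdot\bigl(m|\theta|^{m-1}\nabla_y\theta^i+a_2 y\,\theta^i\bigr),\qquad |\theta|=\sqrt{\textstyle\sum_j(\theta^j)^2},
\end{equation*}
with candidate stationary solution $\theta^i_\star:=(M_i/|\bold{M}|)(\mathcal{C}_{|\bold{M}|}-a_3|y|^2)_+^{1/(m-1)}$. The central computation is that the natural Lyapunov functional
\begin{equation*}
\mathcal{F}[\theta]:=\int_{\R^n}\Bigl[\tfrac{1}{m-1}|\theta|^m+\tfrac{a_2}{2}|y|^2|\theta|\Bigr]\,dy
\end{equation*}
dissipates as
\begin{equation*}
\tfrac{d}{ds}\mathcal{F}[\theta]=-\int|\theta|\,|\nabla_y p|^2\,dy-m\int p\,|\theta|^{m}\sum_{i=1}^k\bigl|\nabla_y(\theta^i/|\theta|)\bigr|^2\,dy,
\end{equation*}
with $p:=\tfrac{m}{m-1}|\theta|^{m-1}+\tfrac{a_2}{2}|y|^2$: the first term is the standard scalar-PME dissipation for $|\theta|$, while the algebraic identity $\sum_i\theta^i\nabla_y(\theta^i/|\theta|)=0$ isolates the second, non-negative obstruction measuring how far the components are from being proportional. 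Both terms vanish simultaneously only when $|\theta|$ is Barenblatt and every ratio $\theta^i/|\theta|$ is constant; together with the mass constraints $\int\theta^i\,dy=M_i$ this pins the equilibrium uniquely to $\theta_\star$. A standard $\omega$-limit argument, using the compactness of the trajectory, then closes the identification $\bar u^i(\cdot,1)=\theta^i_\star$; convergence of the full family along every $\lambda\to\infty$ yields \eqref{eq-L-infty-convergence-between-u-i-and-M-i-barrenblatt-profiles}, and the uniform support bound upgrades it to \eqref{eq-L-1-convergence-between-u-i-and-M-i-barrenblatt-profiles}.

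The hardest part will be making this entropy argument rigorous at the free boundary: $|\theta|$ vanishes on $\partial\supp\theta$, the coefficient $|\theta|^{m-1}$ degenerates, the pressure $p$ need not be non-negative (a constant shift repairs the sign inside the support), and the decomposition above is formal unless $|\theta|>0$. I would handle these difficulties via the regularization $|\theta|_{\3}:=(|\theta|^2+\3)^{1/2}$ and passage to the limit $\3\downarrow 0$, using the uniform H\"older continuity, the propagation estimates, and---where needed---the sharper structural information supplied by Theorem~\ref{eq-lem-equailty-of-support-ofu-i-and-u-j-when-initially-joined}, which ensures that after any collision time all components share a common support so that the ratios $\theta^i/|\theta|$ are well-defined on the interior of that support.
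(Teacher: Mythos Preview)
Your scaling--compactness setup matches the paper's, but your identification step diverges from the paper's proof of this particular theorem. After extracting a limit $\bar u^i$ with initial trace $M_i\delta_0$, the paper does \emph{not} invoke entropy; instead it uses a short algebraic reduction (Lemma~\ref{lem-first-step-for-uniqueness-u-i-expressed-by-v-and-its-volume}): for Dirac data $M_i\delta$, the difference $w:=M_j u^i-M_i u^j$ solves the same linear divergence-form equation with zero initial data, hence $w\equiv 0$, so $u^i=M_i v$ for a single function $v$ satisfying a \emph{scalar} PME with Dirac initial datum, and Pierre's uniqueness theorem then forces $v=|\bold{M}|^{-1}\mathcal{B}_{|\bold{M}|}$ (Theorem~\ref{thm-convergence-of-v-defined-above-to-some-mass-times-barrenblatt-function}). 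Your entropy identification is precisely the paper's \emph{second} proof, packaged separately as Theorem~\ref{thm-asymptotic-with-entrophy-methods}; your dissipation formula agrees with the paper's $I_1+I_2$ once one uses $\Theta^2\sum_i|\nabla(\theta^i/\Theta)|^2=\sum_i|\nabla\theta^i|^2-|\nabla\Theta|^2$. The uniqueness route is shorter and sidesteps the free-boundary regularization issues you flag; the entropy route is more informative mechanically (it explains \emph{why} components become proportional) and is the natural starting point if one later wants convergence rates. One small correction: your $p=\tfrac{m}{m-1}|\theta|^{m-1}+\tfrac{a_2}{2}|y|^2$ is manifestly non-negative, so no constant shift is needed.
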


In the end of the first part, we also studied the asymptotic large time behaviour of function $u^i$, $\left(1\leq i\leq k\right)$, through another approach called the entropy method.

\begin{thm}\label{thm-asymptotic-with-entrophy-methods}
Let $m>1$ and let $\bold{u}=\left(u^1,\cdots,u^k\right)$ be a solution of \eqref{eq-main-equation-of-system} with initial data $u^i_0$, $(1\leq i\leq k)$, integrable and compactly supported. For any $M>0$, let
\begin{equation*}
\widetilde{\mathcal{B}_{M}}\left(\eta\right)=t^{\,a_1}\mathcal{B}_{M}\left(t^{a_2}\eta,t\right)=\left(\mathcal{C}_{M}-\frac{a_1(m-1)}{2mn}\left|\eta\right|^2\right)^{\frac{1}{m-1}}_+.
\end{equation*} 
Then rescaled functions 
\begin{equation*}
\theta^{\,i}\left(\eta,\tau\right)=t^{\,a_1}u^i\left(x,t\right) \qquad\left(\,\eta=xt^{-a_2},\,\,\,\tau=\log t\,\right)
\end{equation*} 
converges uniformly on any compact subset of $\R^n$ and also in $L^1\left(\R^n\right)$ to the function $\frac{M_i}{\left|\bold{M}\right|}\widetilde{\mathcal{B}_{\left|\bold{M}\right|}}$ as $\tau\to\infty$  where $\left\|u_0^i\right\|_{L^1\left(\R^n\right)}=M_i$ for $i=1,\cdots, k$ and $\bold{M}=\left(M_1,\cdots,M_k\right)$.
\end{thm}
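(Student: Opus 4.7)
The plan is to recast the problem in self-similar coordinates and exploit the monotonicity of an associated entropy. Setting $\eta = x t^{-a_2}$, $\tau = \log t$ and $\theta^i(\eta,\tau) = t^{a_1} u^i(t^{a_2}\eta, t)$, the scaling identities $a_1 = n a_2$ and $a_1(m-1) + 2 a_2 = 1$ (read off from \eqref{eq-constant-alpha-1-beta-1-k}) turn \eqref{eq-main-equation-of-system} into the drift-diffusion system
\begin{equation*}
\theta^i_\tau = \nabla_\eta \cdot \left( m |\boldsymbol{\theta}|^{m-1} \nabla_\eta \theta^i + a_2 \eta \theta^i \right), \qquad i = 1, \ldots, k,
\end{equation*}
and each component still preserves its $L^1$ mass $M_i$. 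A direct check shows that the candidate profile $\theta^i_* := \frac{M_i}{|\bold{M}|} \widetilde{\mathcal{B}_{|\bold{M}|}}$ is stationary: because $\sum_i(M_i/|\bold{M}|)^2 = 1$, one has $|\boldsymbol{\theta}_*|^{m-1} = \widetilde{\mathcal{B}_{|\bold{M}|}}^{\,m-1}$, and the bracketed flux reduces to $M_i/|\bold{M}|$ times the vanishing scalar Barenblatt flux $m \widetilde{\mathcal{B}_{|\bold{M}|}}^{\,m-1} \nabla \widetilde{\mathcal{B}_{|\bold{M}|}} + a_2 \eta \widetilde{\mathcal{B}_{|\bold{M}|}}$.

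I would then introduce an entropy of the form
\begin{equation*}
\mathcal{E}[\boldsymbol{\theta}] = \sum_{i=1}^{k} \int_{\R^n} \left( \frac{\theta^i |\boldsymbol{\theta}|^{m-1}}{m-1} + \frac{a_2}{2} |\eta|^2 \theta^i \right) d\eta
\end{equation*}
(or its relative version anchored at $\boldsymbol{\theta}_*$) and differentiate along the rescaled flow. Integration by parts, together with the identity $\sum_i \theta^i \nabla \theta^i = |\boldsymbol{\theta}| \nabla |\boldsymbol{\theta}|$, should yield a dissipation identity
\begin{equation*}
\frac{d}{d\tau} \mathcal{E}[\boldsymbol{\theta}(\tau)] = - \mathcal{D}[\boldsymbol{\theta}(\tau)] \leq 0,
\end{equation*}
where $\mathcal{D}$ is a weighted sum of squared fluxes vanishing exactly at $\boldsymbol{\theta}_*$. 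Integrating in $\tau$ gives $\int_0^{\infty} \mathcal{D}[\boldsymbol{\theta}(\tau)]\, d\tau \leq \mathcal{E}[\boldsymbol{\theta}(0)] < \infty$, which forces a sequence $\tau_k \to \infty$ along which $\mathcal{D}[\boldsymbol{\theta}(\tau_k)] \to 0$.

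To convert this into convergence of $\boldsymbol{\theta}$ itself I would invoke compactness. The rescaled family $\{\boldsymbol{\theta}(\cdot, \tau)\}_{\tau \geq 1}$ is uniformly bounded in $L^1 \cap L^\infty$ (via standard sup-bounds in the rescaled frame) and equicontinuous on compact subsets of $\R^n$ (inheriting the interior Holder regularity of porous-medium-type diffusions, applied under the component-equivalence assumption discussed in the introduction). Arzela--Ascoli then yields a subsequential limit $\boldsymbol{\theta}^{\infty}$ in $C^{0}_{loc}(\R^n)$; lower semicontinuity of $\mathcal{D}$ forces $\mathcal{D}[\boldsymbol{\theta}^{\infty}] = 0$, and combined with the preserved masses $\int \theta^{\infty,i} = M_i$ the variational characterization of the Barenblatt profile identifies $\theta^{\infty,i} = \theta^i_*$. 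Since every sequence $\tau_k \to \infty$ produces the same limit, the full family converges; the accompanying $L^1$ statement can either be read off from \eqref{eq-L-1-convergence-between-u-i-and-M-i-barrenblatt-profiles} via the isometric change of variables $d\eta = t^{-a_1} dx$, or extracted directly from the entropy machinery by a Csiszar--Kullback-type inequality.

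The main obstacle is the design of an entropy whose dissipation is manifestly non-negative and genuinely couples all $k$ components. Applying the scalar PME entropy to $|\boldsymbol{\theta}|$ alone does not close, because the equation satisfied by $|\boldsymbol{\theta}|$ carries an extra defect term $m|\boldsymbol{\theta}|^{m-1}\bigl(\sum_i |\nabla \theta^i|^2 - |\nabla |\boldsymbol{\theta}||^2\bigr) \geq 0$ (Cauchy--Schwarz) pointing in the wrong direction. One must therefore retain the vector structure throughout, and the principal technical points are verifying the coercivity of $\mathcal{D}$ and the uniform-in-$\tau$ equicontinuity needed to upgrade entropy decay to the pointwise convergence claimed in the theorem.
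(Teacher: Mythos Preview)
Your outline has the right skeleton, but the central technical claim is inverted, and this misdiagnosis leads you away from the actual mechanism of the proof. You assert that the scalar PME entropy applied to $\Theta=|\boldsymbol{\theta}|$ ``does not close'' because the defect term points the wrong way. In fact the equation for $\Theta$ reads
\[
\Theta_\tau=\Delta\Theta^m+a_2\nabla\cdot(\eta\,\Theta)+m\Theta^{m-2}\Bigl(|\nabla\Theta|^2-\sum_{i=1}^{k}|\nabla\theta^i|^2\Bigr),
\]
and the bracket is $\le 0$ by Cauchy--Schwarz. Differentiating the scalar entropy $H_\Theta(\tau)=\int\bigl(\tfrac{1}{m-1}\Theta^m+\tfrac{a_2}{2}|\eta|^2\Theta\bigr)\,d\eta$ means multiplying $\Theta_\tau$ by the \emph{non-negative} potential $\tfrac{m}{m-1}\Theta^{m-1}+\tfrac{a_2}{2}|\eta|^2$ and integrating; the defect therefore contributes an additional \emph{non-positive} term, and one obtains $\dfrac{dH_\Theta}{d\tau}=-I_1-I_2$ with $I_1,I_2\ge 0$, where $I_1$ is the standard Fisher dissipation and $I_2$ is the integrated defect. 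This is exactly the paper's route: the scalar entropy closes with room to spare.

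More importantly, the defect is not an obstacle to be engineered around but the very device that identifies the individual components. At the $\omega$-limit (obtained via the compactness already established for Theorem~\ref{thm-asymptotic-behaviour-of-system-PME-L-1=and=L=infty}, not via the H\"older equicontinuity you invoke under an extra equivalence hypothesis), stationarity gives $dH_{\tilde\Theta}/d\tau=0$, hence $I_1=0$ and $I_2=0$ separately. From $I_1=0$ one reads off $\tilde\Theta=\widetilde{\mathcal B_{|\bold M|}}$ in the usual way. From $I_2=0$ one gets the \emph{equality} case in Cauchy--Schwarz, $|\nabla\tilde\Theta|^2=\sum_i|\nabla\tilde\theta^i|^2$, which forces all $\nabla\tilde\theta^i$ to be parallel and each $\tilde\theta^i$ to be a constant multiple of $\tilde\Theta$; mass conservation then fixes the constants as $M_i/|\bold M|$. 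Your proposed vector entropy $\sum_i\int\bigl(\tfrac{1}{m-1}\theta^i|\boldsymbol\theta|^{m-1}+\tfrac{a_2}{2}|\eta|^2\theta^i\bigr)d\eta$ is never shown to be monotone (the cross-terms from differentiating $\theta^i|\boldsymbol\theta|^{m-1}$ are not obviously signed), and even if it were, you give no argument for how vanishing of its dissipation would pin down the component ratios. The missing idea is precisely that the scalar entropy for $|\boldsymbol\theta|$ already works and that its ``extra'' dissipation $I_2$ carries the information about the components.
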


In the second part of the paper, as an application of asymptotic large time behaviour (\,Theorem \ref{thm-asymptotic-behaviour-of-system-PME-L-1=and=L=infty} and Theorem \ref{thm-asymptotic-with-entrophy-methods}\,), we establish a suitable Harnack type inequality for the function $u^i$ of \eqref{eq-main-equation-of-system} which makes the spatial average of $u^i$ under control by the value of $u^i$ at one point. The result is stated as follows.
\begin{thm}\label{lem-calculation-about-waiting-time-lower-bound}
Let $m>1$ and let $\bold{u}=\left(u^1,\cdots,u^k\right)$ be a continuous weak solution of 
\begin{equation}\label{eq-system-PME-for-waiting-time-1}
\left(u^i\right)_t=\nabla\left(m\left|\bold{u}\right|^{m-1}\nabla u^i\right) \qquad \mbox{in $\R^n\times\left[0,T\right]$}.
\end{equation}
with the initial data $u^i_0$, $(1\leq i\leq k)$, non-negative, integrable and compactly supported. Suppose that 
\begin{equation*}
u^i(x,t)\geq 0 \qquad \forall (x,t)\in\R^n\times\left[0,T\right],\,\,1\leq i\leq k
\end{equation*}
and there exists uniform constant $\mu_0>0$ such that
\begin{equation*}
\frac{\min_{1\leq l\leq k}\left\{\int_{\R^n}u^l(x,0)\,dx\right\}}{\max_{1\leq l\leq k}\left\{\int_{\R^n}u^l(x,0)\,dx\right\}}\geq\mu^0>0.
\end{equation*}
Then for $R>T^{\frac{1}{2}}>0$ there exists a constant $C^{\ast}=C^{\ast}\left(m,n\right)>0$ such that
\begin{equation}\label{eq-claim-L-1-norm=of-u-at-initial-is-bounded-by-H-sub-m}
\int_{|x|<R}u^i(x,0)\,dx\leq \frac{C^{\ast}}{\left(\mu^i\right)^{\frac{(m-1)n+2}{2}}}\left(\frac{R^{n+\frac{2}{m-1}}}{T^{\frac{1}{m-1}}}+T^{\frac{n}{2}}\left(u^i\right)^{\frac{(m-1)n+2}{2}}\left(0,T\right)\right) \qquad \forall 1\leq i\leq k
\end{equation}
where
\begin{equation*}
\mu^i=\frac{\int_{\R^n}u^i(x,0)\,dx}{\max_{1\leq l\leq k}\left\{\int_{\R^n}u^l(x,0)\,dx\right\}} \qquad \forall 1\leq i\leq k.
\end{equation*}
\end{thm}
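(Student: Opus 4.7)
My plan is to follow the Aronson--Caffarelli $L^1$--$L^\infty$ Harnack strategy for the scalar PME, using Theorem~\ref{thm-asymptotic-behaviour-of-system-PME-L-1=and=L=infty} together with the comparability $\mu^0>0$ to tame the cross-species coupling $|\mathbf{u}|^{m-1}$. Both sides of \eqref{eq-claim-L-1-norm=of-u-at-initial-is-bounded-by-H-sub-m} are invariant under the self-similar rescaling $u^i(x,t)\mapsto\lambda^{a_1}u^i(\lambda^{a_2}x,\lambda t)$, which preserves the system \eqref{eq-main-equation-of-system}, preserves every mass $M_i$ (and hence $\mu^i$ and $\mu^0$), and transforms $R\mapsto R/\lambda^{a_2}$, $T\mapsto T/\lambda$. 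Picking $\lambda=T$ reduces the problem to $T=1$, and since $a_2<1/2$ for $m>1$ the hypothesis $R>T^{1/2}$ becomes $\widetilde{R}>1$ in the new variables.

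Next I would derive a mass-flux identity. Let $\varphi_R$ be a smooth radial cutoff with $\varphi_R\equiv 1$ on $B_R$, $\supp\varphi_R\subset B_{2R}$, and $|\nabla\varphi_R|\le C/R$. Multiplying the $i$-th equation by $\varphi_R$ and integrating by parts over $\R^n\times[0,1]$ produces
\[
\int u^i(x,0)\,\varphi_R\,dx=\int u^i(x,1)\,\varphi_R\,dx+\int_0^1\!\!\int m\,|\mathbf{u}|^{m-1}\,\nabla u^i\cdot\nabla\varphi_R\,dx\,dt.
\]
On the annulus $\{R\le |x|\le 2R\}$ the dominated bound $|\mathbf{u}|^{m-1}\le C(\mu^i)^{-(m-1)}(u^i)^{m-1}$ (established in the next step) lets me rewrite the flux as a pure $\Delta\varphi_R$-term against $(u^i)^m$, bounded by $CR^{n-2}\sup_{B_{2R}\setminus B_R}(u^i)^m$; after invoking the standard $L^\infty$ decay for PME-type solutions this yields the tail contribution $R^{n+2/(m-1)}$. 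The bulk term $\int u^i(\cdot,1)\varphi_R$ is localised near the origin and dominated by $u^i(0,1)^{((m-1)n+2)/2}$ through the explicit Barenblatt profile \eqref{eq-barenblatt-solution-of-PME}, producing the peak contribution.

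The $(\mu^i)^{-((m-1)n+2)/2}$ prefactor then comes from Theorem~\ref{thm-asymptotic-behaviour-of-system-PME-L-1=and=L=infty}. Since $\mu^0>0$ forces all species masses to be comparable, the $L^\infty$ convergence $t^{a_1}|u^j(\cdot,t)-(M_j/|\mathbf{M}|)\mathcal{B}_{|\mathbf{M}|}(\cdot,t)|\to 0$ on compact sets keeps every ratio $u^j/u^i$ pinched between two positive multiples of $\mu^j/\mu^i$. This produces the pointwise dominance $|\mathbf{u}|^{m-1}\le C(\mu^i)^{-(m-1)}(u^i)^{m-1}$ on the region where the flux is supported, and substituting it into both the flux and peak bounds converts the scalar AC exponents into the stated $((m-1)n+2)/2$ power of $1/\mu^i$ after a short optimisation in the peak bound.

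The principal obstacle is the absence of a comparison principle between components of two different solutions of \eqref{eq-main-equation-of-system}, which blocks the Barenblatt-barrier argument used by Aronson and Caffarelli in the scalar case. Theorem~\ref{thm-asymptotic-behaviour-of-system-PME-L-1=and=L=infty} is the substitute: it provides a quantitative Barenblatt-type control of each $u^i$ in the long-time limit, and combined with the scale-invariance reduction of the first step it plays the role of the missing barrier and delivers the $\mu^i$-sharp Harnack estimate in finite time.
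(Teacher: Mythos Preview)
There is a genuine gap. The crux of your argument is the pointwise bound
\[
\left|\bold{u}\right|^{m-1}\ \le\ C\,(\mu^i)^{-(m-1)}\,(u^i)^{m-1}
\]
on the support of the flux, with $C=C(m,n)$, which you extract from Theorem~\ref{thm-asymptotic-behaviour-of-system-PME-L-1=and=L=infty}. But that theorem is a \emph{large-time} statement: it says $t^{a_1}\big(u^j-\tfrac{M_j}{|\bold{M}|}\mathcal{B}_{|\bold{M}|}\big)\to 0$ as $t\to\infty$. After your rescaling $\lambda=T$ the relevant time is $t=1$, fixed and finite, and the theorem gives you nothing there. Worse, your flux integral runs over all of $[0,1]$, and near $t=0$ the components are just the (arbitrary) initial data, which enjoy no pointwise ratio control whatsoever; the hypothesis $\mu^0>0$ constrains only the total masses, not the pointwise values. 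Finally, even in the asymptotic regime the convergence is of the \emph{difference} $u^j-\tfrac{M_j}{|\bold{M}|}\mathcal{B}_{|\bold{M}|}$ in $L^\infty_{loc}$, and since $\mathcal{B}_{|\bold{M}|}$ vanishes at the edge of its support this does not pin down the ratio $u^j/u^i$ where both are small. So the key inequality above is unjustified, and with it the whole Aronson--Caffarelli flux computation collapses.

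The paper circumvents this by a contradiction--compactness argument rather than a direct estimate. One supposes \eqref{eq-claim-L-1-norm=of-u-at-initial-is-bounded-by-H-sub-m} fails for every constant, obtaining a sequence $\bold{u}_j$ violating it with constant $j$. Rescaling each $\bold{u}_j$ by the maximum mass $I_j=\max_l\int u^l_j(\cdot,0)$ normalises all masses into $[\mu^0,1]$, and the assumed violation together with $R>T^{1/2}$ forces $I_j\to\infty$; hence the rescaled supports shrink to the origin and the initial data converge to $\mu^i\delta$. Now the compactness machinery behind Theorem~\ref{thm-asymptotic-behaviour-of-system-PME-L-1=and=L=infty}, together with the uniqueness result for Dirac initial data, identifies the limit as $\tfrac{\mu^i}{\boldsymbol{\mu}}\mathcal{B}_{\boldsymbol{\mu}}$, for which the scalar Harnack inequality \eqref{eq-harnack-type-estimates-for-barrenblatt} is already known; this contradicts the violation for large $j$. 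A further rescaling removes the assumption that the initial support lies in $B_1$. In short, the asymptotic theorem is used \emph{indirectly}, to identify the limit of a rescaled sequence at the fixed time $T$, not to produce a finite-time pointwise ratio bound on a single solution---which is precisely what your approach would need but cannot get.
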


Lastly, in the third part of this paper, we investigate the properties of 1-directional travelling wave type solution which is one of the simplest solutions whose all components have unbounded $L^1$-mass.  More precisely, let $0\leq l\leq k$ be a integer and let $\bold{u}=\left(u^1,\cdots,u^k\right)$ be a 1-directional travelling wave type solution with respect to the direction $e\in\R$ of 
\begin{equation*}\label{eq-main-system-eq-for-travelling-wave-type-with-direction-e}
\left(u^i\right)_t=\nabla\cdot\left(m\left|\bold{u}\right|^{m-1}\nabla u^i\right) \qquad \forall (x,t)\in\R^n\times\R,
\end{equation*}
i.e., the functions $u^i$, $\left(1\leq i\leq k\right)$, can be given in the form of
\begin{equation*}\label{eq-expression-of-soluiton-u-to-i-by-g-single-variable-function-travelling-wave-eq-with-direction-e}
u^i(x,t)=g^i\left(x\cdot e+c^it\right) \qquad \forall x\in\R,\,\,1\leq i\leq k
\end{equation*}
for some constants $c^1$, $\cdots$, $c^k$ where
\begin{equation*}\label{eq-assumption-for-constant-in-1-dimensional-solution-with-direction-e}
c^1,\cdots, c^l<0\qquad \mbox{and} \qquad c^{l+1},\cdots, c^k>0
\end{equation*}
and single variable functions $g^1$, $\cdots$, $g^k$ where
\begin{equation*}\label{eq-condition-of-g-i-1-first-l-th-functions-with-direction-e}
g^{i_1}(s)=0 \qquad  \mbox{for $s\geq 0$}, \qquad 
g^{i_1}(s)>0\qquad  \mbox{for $s<0$} \qquad \forall 1\leq i_1\leq l 
\end{equation*}
and
\begin{equation*}\label{eq-condition-of-g-i-1-last-k-l-th-functions-with-direction-e}
g^{i_2}(s)>0\qquad \mbox{for $s>0$}\qquad g^{i_2}(s)=0 \qquad \mbox{for $s\leq 0$} \qquad \forall l+1\leq i_2\leq k .
\end{equation*}
Then we first showed that there are only 1-directional travelling wave type solutions with components whose directions are the same. In addition, we find the explicit form of the 1-dimensional travelling wave type solution. The statement is as follows.
\begin{thm}\label{cor-expression-of-travelling-wave-equation-explicit-form}
Let $\bold{u}=\left(u^1,\cdots,u^k\right)$ be a 1-directional travelling wave solution of \eqref{eq-main-system-eq-for-travelling-wave-type} with respect to the direction $e\in\R$. Then
\begin{equation}\label{eq-form-of-1-directional-travelling-wave-type-sol-1}
u^i(x,t)=c_i\left(\widehat{c}\,t-x\cdot e\right)_+^{\frac{1}{m-1}} \qquad \forall (x,t)\in\R^n\times\R,\,\,1\leq i\leq k
\end{equation}
for some positive constants $c_1$, $\cdots$, $c_k$ and $\widehat{c}$ where
\begin{equation*}
\widehat{c}=\frac{1}{m-1}\left(\sum_{i=1}^kc_i^{\,2}\right)^{\frac{m-1}{2}}
\end{equation*}
or
\begin{equation}\label{eq-form-of-1-directional-travelling-wave-type-sol-2}
u^i(x,t)=\overline{c}_i\left(\widetilde{c}\,t+x\cdot e\right)_+^{\frac{1}{m-1}} \qquad \forall (x,t)\in\R^n\times\R,\,\,1\leq i\leq k
\end{equation}
for some positive constants $\overline{c}_1$, $\cdots$, $\overline{c}_k$ and $\widetilde{c}$ where
\begin{equation*}
\widetilde{c}=\frac{1}{m-1}\left(\sum_{i=1}^k\overline{c}_i^{\,2}\right)^{\frac{m-1}{2}}.
\end{equation*}
\end{thm}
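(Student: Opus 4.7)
The plan is to reduce the system to a one-dimensional ODE via the traveling-wave ansatz, use a free-boundary/maximum-principle argument to force all components to share a common front (hence a common speed with a common sign), and then integrate the resulting ODE to pin down the profile. Setting $\xi = x\cdot e$, the ansatz $u^i(x,t) = g^i(\xi + c^it)$ reduces \eqref{eq-main-system-eq-for-travelling-wave-type-with-direction-e} to the quasilinear one-dimensional equation $(u^i)_t = \partial_\xi\bigl(m|\mathbf u|^{m-1}\partial_\xi u^i\bigr)$. I first claim that every $u^j$ must vanish on the free boundary of every $u^i$. Suppose instead that $(\xi_0,t_0)$ lies on the free boundary of $u^i$ while $|\mathbf u|(\xi_0,t_0) > 0$; by continuity $m|\mathbf u|^{m-1}$ is bounded above and below by positive constants in some neighborhood of $(\xi_0,t_0)$, so the equation for $u^i$ becomes uniformly parabolic in divergence form there. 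The strong maximum principle then forces $u^i \equiv 0$ on the parabolic past from $(\xi_0,t_0)$, contradicting positivity of $u^i$ at nearby points on the support side. Under the ansatz, this common-front conclusion has strong consequences: the first-group front $\xi = |c^{i_1}|t$ (support to its left) and the second-group front $\xi = -c^{i_2}t$ (support to its right) cannot coincide for $t > 0$, so $l = 0$ or $l = k$; and within a single group, equating fronts forces $|c^i|$ to be independent of $i$. Thus every $c^i$ equals a common value, say $-\hat c$ in the first-group case or $\tilde c$ in the second-group case.

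\textbf{Recovery of the explicit profile.} Treating WLOG $c^i\equiv -\hat c$, I set $s = \xi - \hat ct$, so $u^i = g^i(s)$ with each $g^i$ supported in $(-\infty,0]$. The system reduces to the ODE system $-\hat c\,(g^i)'(s) = m\bigl(|\mathbf g(s)|^{m-1}(g^i)'(s)\bigr)'$ for $s < 0$, where $|\mathbf g|^2 = \sum_j (g^j)^2$. Integrating once, and using $g^i(0) = 0$ together with the vanishing of the flux $m|\mathbf g|^{m-1}(g^i)'$ at the common front (which is forced by the first step), yields $m|\mathbf g|^{m-1}(g^i)' = -\hat c\,g^i$. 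Thus $(g^i)'/g^i$ is the same function of $s$ for every $i$, and so $g^i = c_iG$ for a single profile $G > 0$ and positive constants $c_i$. Substituting $|\mathbf g| = |\mathbf c|G$ with $|\mathbf c|^2 = \sum c_i^2$ collapses the ODE to $m|\mathbf c|^{m-1}G^{m-2}G' = -\hat c$, which integrates from $G(0)=0$ to show that $G(s)^{m-1}$ is linear in $-s$. After absorbing the overall multiplicative constant into the $c_i$'s, this produces $u^i(x,t) = c_i(\hat ct - x\cdot e)_+^{1/(m-1)}$ and pins $\hat c$ to the claimed function of $|\mathbf c|$, giving \eqref{eq-form-of-1-directional-travelling-wave-type-sol-1}. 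The companion form \eqref{eq-form-of-1-directional-travelling-wave-type-sol-2} follows by the symmetric computation in the case $c^i\equiv\tilde c > 0$.

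\textbf{Main obstacle.} The delicate point is the maximum-principle step used to establish the common front. The coefficient $m|\mathbf u|^{m-1}$ in the reduced equation for $u^i$ is a priori only continuous (the ansatz guarantees only continuity of each $g^i$), so the strong maximum principle must be invoked in its form valid for divergence-form parabolic equations with continuous, uniformly elliptic coefficients; an equivalent route is to build an explicit local barrier that rules out a traveling wave in which one component's front lies strictly interior to another component's support. Once the common front is secured, the remainder of the proof is the routine ODE computation sketched above.
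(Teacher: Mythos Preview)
Your approach is essentially the same as the paper's: both reduce to one dimension via the travelling-wave ansatz, invoke the strong maximum principle in the region where $|\bold{u}|>0$ to force all components to share a common support (this is exactly the content of the paper's synchronization result, Theorem~\ref{eq-lem-equailty-of-support-ofu-i-and-u-j-when-initially-joined}, which is invoked through Lemma~\ref{thm-condition-of-l-for-existence-of-travelling-wave-type-eq}), and then integrate the resulting ODE $m|\bold{g}|^{m-1}(g^i)'=-\widehat{c}\,g^i$ to obtain the explicit profiles. The only cosmetic difference is that you rule out the mixed-direction case $0<l<k$ directly from the common-front conclusion, whereas the paper reaches the same contradiction through an ODE computation on the overlap region (equations \eqref{eq-equivalence-on-the-region-mixed-traveling-wave-2}--\eqref{eq-multi-of-g-i-1-and-g-i-2-equal-function-of-x-t-from-explicit-form}); both routes rest on the same maximum-principle input.
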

By scaling and an argument similar to the proof of Theorem \ref{cor-expression-of-travelling-wave-equation-explicit-form}, we can also find the properties of the 1-directional travelling wave-like solutions whose behaviour at infinity is like the travelling wave.\\
\indent We end up this section by stating the definition of solutions. We say that $\bold{u}=\left(u^1,\cdots,u^k\right)$ is a weak solution of \eqref{eq-main-equation-of-system} in $\R^n\times(0,\infty)$ if $u^i$, $\left(i=1,\cdots,k\right)$, is a locally integrable function satisfying, for any $T>0$
\begin{enumerate}
\item Function space of $u^i$, $\left(1,\cdots,k\right)$:
\begin{equation*}\label{eq-first-condition-of-weak-soluiton-u-with-U}
\left|\bold{u}\right|^{m-1}\left|\nabla u^i\right|\in L^2\left(0,T:L^2\left(\R^n\right)\right) 
\end{equation*}
\item Weak formula of $u^i$, $\left(1,\cdots,k\right)$: 
\begin{equation}\label{eq-identity--of-formula-for-weak-solution}
\int_{0}^{T}\int_{\R^n}\left\{m\left|\bold{u}\right|^{m-1}\nabla u^i\cdot\nabla\vp-u^i\vp_t\right\}\,dxdt=\int_{\R^n}u^i_0(x)\vp(x,0)\,dx
\end{equation}
holds for any test function $\vp\in H^1\left(0,T:L^2\left(\R^n\right)\right)\cap L^2\left(0,T:H^1_0\left(\R^2\right)\right)$ which vanishes for $t=T$.
\end{enumerate}

\indent A brief outline of the paper is as follows. In Section 2, we will give a series of preliminary such as the existence and local continuity of functions $u^i$, $\left(1\leq i\leq k\right)$. Section 3 is devoted to the proofs of Theorem \ref{thm-existence-of-waiting-time-if-initially-separated}, Theorem \ref{eq-lem-equailty-of-support-ofu-i-and-u-j-when-initially-joined}, Theorem \ref{thm-asymptotic-behaviour-of-system-PME-L-1=and=L=infty} and Theorem \ref{thm-asymptotic-with-entrophy-methods}. Section 4 deals with the regularity properties of continuous weak solution of \eqref{eq-main-equation-of-system}. We establish that continuous weak solution has the Harnack type inequality if there some relation between components of the solution $\bold{u}$, (\,Theorem \ref{lem-calculation-about-waiting-time-lower-bound}\,). The last Section 5 is devoted to the study of 1-directional travelling wave type solutions and solutions which has travelling wave behaviour at infinity.

\section{Preliminary Results}
\setcounter{equation}{0}
\setcounter{thm}{0}

We will study various preliminary results of solution $\bold{u}$ of \eqref{eq-main-equation-of-system} which will be used in the next sections of the paper.

\subsection{Existence of solution $\bold{u}=\left(u^1,\cdots,u^k\right)$}

As the first result of this paper, we are going to deal with the existence of solution of \eqref{eq-main-equation-of-system}. We start by collecting a few properties of solution $\bold{u}$.\\
\indent For a positive, integrable function $\mathcal{A}(x,t)$, we consider the following problem 
\begin{equation}\label{eq-equation-of-system-with-given-bold-u-absolute}
\begin{cases}
\begin{aligned}
u_t&=\nabla\left(\mathcal{A}\,\nabla u\right) \qquad \mbox{in $\R^n\times\left(0,\infty\right)$}\\
u(x,0)&=u_0(x) \qquad \qquad \forall x\in\R^n.
\end{aligned}
\end{cases}
\end{equation}
Then, we say that $u$ is a weak solution of \eqref{eq-equation-of-system-with-given-bold-u-absolute} in $\R^n\times(0,\infty)$ if $u$ is a locally integrable function satisfying, for any $T>0$
\begin{enumerate}
	\item Function space of $u$:
	\begin{equation*}
    \mathcal{A}\left|\nabla u\right|\in L^2\left(0,T:L^2\left(\R^n\right)\right) 
	\end{equation*}
	\item Weak formula of $u$: 
	\begin{equation*}
	\int_{0}^{T}\int_{\R^n}\left\{\mathcal{A}\nabla u\cdot\nabla\vp-u\vp_t\right\}\,dxdt=\int_{\R^n}u_0(x)\vp(x,0)\,dx
	\end{equation*}
	holds for any test function $\vp\in H^1\left(0,T:L^2\left(\R^n\right)\right)\cap L^2\left(0,T:H^1_0\left(\R^2\right)\right)$ which vanishes for $t=T$.
\end{enumerate}
By an argument similar to the proof of Lemma 2.3 of \cite{KL1}, we can get the following Uniqueness Lemma for the weak solution $u$. 
\begin{lemma}[Uniqueness of solutions]\label{lem-uniquness-of-weak-solution-2}
The Problem \eqref{eq-equation-of-system-with-given-bold-u-absolute} has at most one solution if $u_0\in L^2\left(\R^n\right)$.
\end{lemma}

Let $\bold{u}=\left(u^1,\cdots,u^k\right)$ be solutions of \eqref{eq-main-equation-of-system}  and suppose that $u^i$ has second differentiable in space variable $x$ and first differentiable in time variable $t$ for all $1\leq i\leq k$.	Then, by simple computation we have
\begin{equation}\label{eq-computation-for-equation-of-bold-u-square-1}
\left[\left(u^{i}\right)^2\right]_t=2u^iu^i_t=2u^i\nabla\left(m\left|\bold{u}\right|^{m-1}\nabla u^i\right) \qquad \forall 1\leq i\leq k
\end{equation}
and
\begin{equation}\label{eq-computation-for-equation-of-bold-u-square-2}
\nabla\left(m\left|\bold{u}\right|^{m-1}\nabla \left(u^{i}\right)^2\right)=2u^i\nabla\left(m\left|\bold{u}\right|^{m-1}\nabla u^i\right)+2m\left|\bold{u}\right|^{m-1}\left|\nabla u^i\right|^2 \qquad \forall  1\leq i\leq k.
\end{equation}
Denote by 
\begin{equation*}
w=\left|\bold{u}\right|^2=\sum_{i=1}^{k}\left(u^i\right)^2.
\end{equation*} 
Then by \eqref{eq-computation-for-equation-of-bold-u-square-1} and \eqref{eq-computation-for-equation-of-bold-u-square-2},
\begin{equation}\label{eq-equation-for-bold-w-equal-standard-norm}
w_t=\nabla\left(mw^{\frac{m-1}{2}}\nabla w\right)-F\qquad \forall (x,t)\in\R^n\times\left(0,\infty\right)
\end{equation}
where
\begin{equation*}
F(x,t)=2m\left|\bold{u}(x,t)\right|^{m-1}\sum_{i=1}^{k}\left|\nabla u^i(x,t)\right|^2 \qquad \forall (x,t)\in\R^n\times(0,\infty).
\end{equation*}
Let 
\begin{equation*}
\overline{w}(x,t)=w^{\frac{1}{2}}(x,t)=\left|\bold{u}\right|(x,t) \qquad \forall (x,t)\in\R^n\times[0,\infty).
\end{equation*}
By Cauchy-Schwarz inequality,
\begin{equation}\label{eq-inequalities-from-Cauchy-Schwarz-inequality-1}
\left|\nabla \overline{w}\right|^2=\frac{\left|\nabla w\right|^2}{4w}\leq \sum_{i=1}^k\left|\nabla u^i\right|^2.
\end{equation}
Thus, by \eqref{eq-equation-for-bold-w-equal-standard-norm} and \eqref{eq-inequalities-from-Cauchy-Schwarz-inequality-1} we can expect that $\overline{w}$ satisfies
\begin{align}
\overline{w}_{\,t}=\nabla\left(m\overline{w}^{\,m-1}\nabla\,\overline{w}\right)+m\overline{w}^{\,m-2}\left(\left|\nabla\,\overline{w}\right|^2-\sum_{i=1}^k\left|\nabla u^i\right|^2\right)\leq \La\overline{w}^{\,m} \qquad \forall (x,t)\in\R^n\times\left(0,\infty\right)\label{eq-equation-for-bold-v-sub-equal-standard-PME}.
\end{align}
To check whether  \eqref{eq-equation-for-bold-v-sub-equal-standard-PME} is also true for the weak solution, we take the function $\frac{u^i}{\overline{w}}\vp$ as a test function in \eqref{eq-identity--of-formula-for-weak-solution} and sum it over $i=1,\cdots,k$. Then the following weak formulation can be easily derived.
\begin{equation}\label{eq-equation-for-bold-v-sub-equal-in-the-weak-sense-standard-PME}
\int_{0}^{T}\int_{\R^n}\left\{m\overline{w}^{m-1}\nabla \overline{w}\cdot\nabla\vp-m\overline{w}^{\,m-2}\left(\left|\nabla\,\overline{w}\right|^2-\sum_{i=1}^k\left|\nabla u^i\right|^2\right)\vp-\overline{w}\vp_t\right\}\,dxdt=\int_{\R^n}\overline{w}(x,0)\vp(x,0)\,dx.
\end{equation}
By \eqref{eq-equation-for-bold-v-sub-equal-in-the-weak-sense-standard-PME}, $\overline{w}=\left|\bold{u}\right|$ is a subsolution of the standard porous medium equation, i.e.,
\begin{equation*}
\overline{w}_t\leq \La\overline{w}^m \qquad \mbox{in the weak sense}.
\end{equation*}
 Thus, by an argument similar to the proofs in Chapter 9 of \cite{Va1} we have the following lemma for solution $u^i$, $\left(1\leq i\leq k\right)$.

\begin{lemma}[\textbf{$L^{\infty}$ Bound of $\left|\bold{u}\right|$}]\label{eq-lemma-L-infty-bound-of-bold-u-absolute-value}
Let $m>1$. Let $\bold{u}=\left(u^1,\cdots,u^k\right)$ be a weak solution of \eqref{eq-main-equation-of-system} with initial data $u_0^i\in L^1\left(\R^n\right)$ compactly supported in a bounded set of $\R^n$. Let 
\begin{equation*}
v(x,t)=\left|\bold{u}\right|(x,t) \qquad \mbox{and} \qquad v_0(x)=\sqrt{\sum_{i=1}^k\left(u^{i}_0(x)\right)^2}. 
\end{equation*}
Then
\begin{equation*}
v(x,t)\leq C\left\|v_0\right\|^{a_2}_{L^1}t^{-a_1}
\end{equation*}
where $a_1=\frac{n}{n(m-1)+2}$, $a_2=\frac{2}{n(m-1)+2}$ and $C>0$ depending only on $m$ and $n$.
\end{lemma}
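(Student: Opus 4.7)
The essential ingredient is already in the excerpt: the pointwise inequality
\begin{equation*}
\overline{w}_{\,t}\;\leq\;\Delta\,\overline{w}^{\,m},\qquad \overline{w}=|\bold{u}|,
\end{equation*}
derived from the identity for $w=|\bold{u}|^2$ together with the Cauchy--Schwarz bound $|\nabla \overline{w}|^{2}\leq\sum_i|\nabla u^i|^{2}$. In other words, $\overline{w}$ is a non-negative distributional subsolution of the standard porous medium equation with initial datum $v_0$. My plan is to reduce the estimate for $|\bold{u}|$ to the classical $L^1$--$L^\infty$ smoothing effect for the scalar PME by a comparison argument.

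\emph{Step 1: comparison.} Construct the unique weak solution $V\geq 0$ of the Cauchy problem $V_t=\Delta V^m$ with $V(\cdot,0)=v_0\in L^1(\R^n)$ of compact support. Its existence, uniqueness, and the comparison principle for sub/supersolutions are all part of the classical theory (Chapter~9 of \cite{Va1}). Apply that comparison principle to the subsolution $\overline{w}$ and the solution $V$, whose traces at $t=0$ coincide, obtaining
\begin{equation*}
\overline{w}(x,t)\;\leq\;V(x,t)\qquad \forall (x,t)\in\R^n\times(0,\infty).
\end{equation*}
The verification that $\overline{w}$ is an \emph{admissible} subsolution in the weak formulation needs a short check: testing \eqref{eq-equation-for-bold-v-sub-equal-standard-PME} against non-negative $\vp\in C_c^{1}$ and using the integrability of $|\bold{u}|^{m-1}|\nabla u^i|$ from the definition of weak solution, together with Cauchy--Schwarz, yields the required distributional inequality $\iint (\overline{w}\,\vp_t-\nabla \overline{w}^m\cdot\nabla\vp)\,dxdt \geq -\int v_0\,\vp(\cdot,0)\,dx$.

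\emph{Step 2: smoothing effect for the scalar PME.} Apply the Aronson--B\'enilan/B\'enilan $L^1$--$L^\infty$ regularization for the PME to $V$, which gives
\begin{equation*}
\|V(\cdot,t)\|_{L^\infty(\R^n)}\;\leq\;C(m,n)\,\|v_0\|_{L^1}^{\,a_2}\,t^{-a_1},\qquad a_1=\tfrac{n}{n(m-1)+2},\ \ a_2=\tfrac{2}{n(m-1)+2}.
\end{equation*}
This can be proved either by a Moser-type iteration on $V^{p}$-energies, or by a Barenblatt-envelope argument (compare $V$ with a suitably shifted and dilated $\mathcal{B}_M$ and optimize parameters). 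Combining Steps~1 and~2 yields the stated bound $v(x,t)\leq C\|v_0\|_{L^1}^{a_2}t^{-a_1}$.

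\emph{Main obstacle.} The only non-mechanical point is Step~1: one has to show that the sign inequality \eqref{eq-equation-for-bold-v-sub-equal-standard-PME}, which is computed formally, holds in the genuine weak sense of sub/supersolutions so as to be consumable by the scalar PME comparison principle. This requires enough regularity/integrability on each $u^i$ to justify the differential identities for $w=|\bold{u}|^2$ and then to take square roots; if necessary, one works with a regularized quantity $(\varepsilon+w)^{1/2}$, derives the subsolution inequality there, and passes $\varepsilon\to 0$ using dominated convergence and the integrability of $|\bold{u}|^{m-1}|\nabla u^i|$ guaranteed by the weak-solution definition. Once this technicality is handled, the remainder of the proof is an application of well-known scalar PME estimates, in line with the author's reference to \cite{Va1}.
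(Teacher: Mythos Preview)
Your proposal is correct and follows essentially the same approach as the paper: the paper does not give an explicit proof but simply observes that $\overline{w}=|\bold{u}|$ is a subsolution of the scalar porous medium equation (via \eqref{eq-equation-for-bold-v-sub-equal-standard-PME}) and then invokes the theory in Chapter~9 of \cite{Va1}. Your two-step plan (comparison with the scalar PME solution $V$ having the same initial data, followed by the classical $L^1$--$L^\infty$ smoothing effect for $V$) is precisely how one unpacks that reference, and your attention to the weak-sense justification of the subsolution inequality is the appropriate technical care that the paper leaves implicit.
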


One of the interesting phenomenons caused by the degeneracy of porous medium equation is the finite speed of propagation. Thus we can have the following lemma.
\begin{lemma}\label{eq-lem-boundedness-of-support-of-solution}
Let $m>1$. Let $\bold{u}=\left(u^1,\cdots,u^k\right)$ be a weak solution of \eqref{eq-main-equation-of-system} with initial data $u_0^i\in L^1\left(\R^n\right)$ compactly supported in a bounded set of $\R^n$. Then for every $t>0$, there exists a constant $R=R(t)>0$ such that
\begin{equation*}
\textbf{supp}\, u^i\left(\cdot,t\right)\subset \textbf{supp}\, \left|\bold{u}\right|\left(\cdot,t\right)\subset B_{R} \qquad \forall 1\leq i\leq k,\,\,t>0.
\end{equation*}
Moreover, $\left|\bold{u}\right|\left(\cdot,t\right)$ has the finite speed of propagation for all $t\geq 0$.
\end{lemma}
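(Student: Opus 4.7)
The strategy is to reduce the problem to the scalar porous medium equation, exploiting the subsolution inequality $\overline{w}_{\,t}\leq\La\,\overline{w}^{\,m}$ for $\overline{w}=\left|\bold{u}\right|$ established in \eqref{eq-equation-for-bold-v-sub-equal-standard-PME}, and then to invoke the classical finite-propagation property for the scalar PME. The first inclusion is immediate: if $\left|\bold{u}\right|(x,t)=0$ then every component $u^{i}(x,t)$ vanishes, so $\textbf{supp}\,u^{i}(\cdot,t)\subset\textbf{supp}\,\left|\bold{u}\right|(\cdot,t)$ for each $1\leq i\leq k$.

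To control $\textbf{supp}\,\left|\bold{u}\right|(\cdot,t)$, I would compare $\overline{w}$ from above with a shifted Barenblatt profile of the scalar PME. Since every $u^{i}_{0}$ is compactly supported, so is $v_{0}$; fix $R_{0}>0$ with $\textbf{supp}\,v_{0}\subset B_{R_{0}}$, and recall the $L^{\infty}$ control of $\overline{w}$ from Lemma \ref{eq-lemma-L-infty-bound-of-bold-u-absolute-value}. Using the explicit form \eqref{eq-barenblatt-solution-of-PME}, choose a mass parameter $M'>0$ and a time shift $\tau>0$ large enough so that the Barenblatt profile $\mathcal{B}_{M'}(x,\tau)$ dominates $v_{0}(x)$ on all of $\R^{n}$; this is possible because $\mathcal{B}_{M'}(\cdot,\tau)$ is continuous, radially decreasing, with support containing $B_{R_{0}}$ and centre value that can be made to exceed $\|v_{0}\|_{L^{\infty}}$ by taking $M'$ large and $\tau$ appropriately.

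Applying the standard comparison principle for the scalar PME (see Chapter 9 of \cite{Va1}) between the weak subsolution $\overline{w}$ and the exact solution $(x,t)\mapsto\mathcal{B}_{M'}(x,t+\tau)$ then yields
\begin{equation*}
\left|\bold{u}\right|(x,t)\leq\mathcal{B}_{M'}(x,t+\tau)\qquad\forall (x,t)\in\R^{n}\times[0,\infty).
\end{equation*}
Since $\mathcal{B}_{M'}(\cdot,t+\tau)$ is supported in a ball of radius $R(t)=\sqrt{\mathcal{C}_{M'}/a_{3}}\,(t+\tau)^{a_{2}}$ by \eqref{eq-barenblatt-solution-of-PME}, we obtain $\textbf{supp}\,\left|\bold{u}\right|(\cdot,t)\subset B_{R(t)}$ and, via the explicit polynomial dependence of $R(t)$ on $t$, the finite speed of propagation.

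The main obstacle is justifying that \eqref{eq-equation-for-bold-v-sub-equal-standard-PME} holds in a sufficiently strong distributional sense to permit the scalar PME comparison against a Barenblatt barrier. This amounts to deducing a Kato-type inequality for $\overline{w}=\left|\bold{u}\right|$ from the vectorial weak formulation \eqref{eq-identity--of-formula-for-weak-solution}, typically via a smooth approximation that regularises $\left|\bold{u}\right|^{m-1}$ away from $\{\left|\bold{u}\right|=0\}$, derives \eqref{eq-equation-for-bold-w-equal-standard-norm}--\eqref{eq-equation-for-bold-v-sub-equal-standard-PME} classically, and passes to the limit using the non-positivity of the correction term $m\overline{w}^{\,m-2}\bigl(\left|\nabla\overline{w}\right|^{2}-\sum_{i}\left|\nabla u^{i}\right|^{2}\bigr)$. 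Once the weak subsolution property for $\overline{w}$ is firmly established, the Barenblatt barrier comparison is a standard scalar argument.
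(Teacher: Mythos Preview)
Your proposal is correct and follows essentially the same route as the paper: the paper does not spell out a proof of this lemma but simply invokes the subsolution inequality \eqref{eq-equation-for-bold-v-sub-equal-standard-PME} for $\overline{w}=\left|\bold{u}\right|$ together with the classical finite-propagation property of the scalar PME (implicitly via Chapter~9 of \cite{Va1}, as already cited for Lemma~\ref{eq-lemma-L-infty-bound-of-bold-u-absolute-value}). Your Barenblatt barrier comparison is precisely the standard way to make that invocation explicit, and your remark about justifying \eqref{eq-equation-for-bold-v-sub-equal-standard-PME} distributionally is a fair caveat that the paper leaves unaddressed.
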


Let $u_0^i\in L^1\left(\R^n\right)\cap L^{m+1}\left(\R^n\right)$ be compactly supported in a bounded set of $\R^n$.  Then by Lemma \ref{eq-lem-boundedness-of-support-of-solution}, the function $u^i$ is also supported in a bounded set of $\R^n$ for all $t>0$. Multiplying \eqref{eq-equation-for-bold-w-equal-standard-norm} by $w^{\frac{m-1}{2}}$ and integrating over $\R^n\times(0,t)$, we have
\begin{equation}\label{eq-energy-type-inequality-of-bold-w-equal-standard-norm-of-bold-u}
\begin{aligned}
\frac{2}{m+1}\int_{\R^n}w^{\frac{m+1}{2}}(x,t)\,dx&+\frac{2(m-1)}{m}\int_{0}^{t}\int_{\R^n}\left|\nabla w^{\frac{m}{2}}\right|^2\,dxd\tau\\
&+\int_{0}^{t}\int_{\R^n}w^{\frac{m-1}{2}}F\,dxd\tau\leq \frac{2}{m+1}\int_{\R^n}w^{\frac{m+1}{2}}(x,0)\,dx, \qquad \forall t>0.
\end{aligned}
\end{equation}

By \eqref{eq-energy-type-inequality-of-bold-w-equal-standard-norm-of-bold-u}, we now are ready to obtain the function space where the functions $u^i$, $\left(1\leq i\leq k\right)$, belong.

\begin{lemma}[\textbf{Function Space}]\label{lem-space-where-weak-solution-belongs-to-1}
Let $m>1$. Let $\bold{u}=\left(u^1,\cdots,u^k\right)$ be a weak solution of \eqref{eq-main-equation-of-system} with initial data $u_0^i\in L^1\left(\R^n\right)\cap L^{m+1}\left(\R^n\right)$ compactly supported in a bounded set of $\R^n$. Then
\begin{equation*}
\left|\bold{u}\right|^{m-1}\left|\nabla u^i\right|\in L^2\left(0,\infty:L^2\left(\R^n\right)\right) \qquad \forall 1\leq i\leq k.
\end{equation*}
\end{lemma}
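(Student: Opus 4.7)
The plan is to obtain the claim as an immediate consequence of the energy identity (2.9) already recorded in the excerpt. The structure is: (i) recognize that the third integral on the left of (2.9) is precisely the quantity we want to bound, (ii) bound the boundary integral on the right by the $L^{m+1}$-norms of the initial data, (iii) justify (2.9) itself.

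First I would identify the dissipation term. Since $w=\left|\bold{u}\right|^2$ gives $w^{\frac{m-1}{2}}=\left|\bold{u}\right|^{m-1}$, and $F=2m\left|\bold{u}\right|^{m-1}\sum_{i=1}^k\left|\nabla u^i\right|^2$, the third integral in (2.9) equals
\begin{equation*}
\int_{0}^{\infty}\!\!\int_{\R^n} w^{\frac{m-1}{2}}F\,dxdt
= 2m\sum_{i=1}^k \int_{0}^{\infty}\!\!\int_{\R^n}\left|\bold{u}\right|^{2(m-1)}\left|\nabla u^i\right|^2\,dxdt
= 2m\sum_{i=1}^k \left\|\left|\bold{u}\right|^{m-1}\left|\nabla u^i\right|\right\|_{L^2(0,\infty;L^2(\R^n))}^2.
\end{equation*}
As every term on the left of (2.9) is nonnegative, each of the $k$ norms on the right is dominated by $\frac{1}{m(m+1)}\int_{\R^n}w^{(m+1)/2}(x,0)\,dx$.

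Next I would show this initial integral is finite. From $w(x,0)=\sum_i (u_0^i(x))^2\le k\max_i (u_0^i(x))^2$ one has the elementary pointwise inequality
\begin{equation*}
w^{\frac{m+1}{2}}(x,0)\le k^{\frac{m+1}{2}}\sum_{i=1}^k\left(u_0^i(x)\right)^{m+1},
\end{equation*}
so the right-hand side of (2.9) is bounded by a constant times $\sum_i\|u_0^i\|_{L^{m+1}(\R^n)}^{m+1}<\infty$ by hypothesis. Combining the two steps gives $\||\bold{u}|^{m-1}|\nabla u^i|\|_{L^2(0,\infty;L^2(\R^n))}<\infty$ for each $i$, which is exactly the conclusion.

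The only substantive obstacle I anticipate is a rigorous derivation of (2.9), which in the excerpt is obtained formally by multiplying $w_t=\nabla\cdot(mw^{(m-1)/2}\nabla w)-F$ by $w^{(m-1)/2}$ and integrating by parts. This manipulation presupposes enough regularity of $\bold{u}$ to apply the chain rule and to discard spatial boundary terms; the latter is secured by the finite speed of propagation of $|\bold{u}|$ from Lemma 2.3, which places $\text{supp}\,w(\cdot,t)$ inside a bounded set for each $t$. The standard remedy for the former is a regularization argument: smooth the initial data and replace $|\bold{u}|^{m-1}$ by $(|\bold{u}|^2+\delta)^{(m-1)/2}$ to obtain classical solutions for which (2.9) holds by direct computation, and then pass to the limit $\delta\downarrow 0$ using Fatou's lemma on the dissipation integrals and dominated convergence on the initial datum.
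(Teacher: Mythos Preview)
Your proposal is correct and follows essentially the same approach as the paper: both extract the desired $L^2$ bound directly from the third (dissipation) term on the left of the energy inequality (2.9), and bound the right-hand side by noting that $w(\cdot,0)\in L^{(m+1)/2}(\R^n)$ because each $u_0^i\in L^{m+1}(\R^n)$. The paper simply quotes (2.9) without further justification, whereas you add a regularization argument to make (2.9) rigorous; this is a welcome refinement but not a different route.
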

\begin{proof}
Let $w=\left|\bold{u}\right|^2$. Since $u_0^i\in L^{m+1}\left(\R^n\right)$ for all $1\leq i\leq k$, we have
\begin{equation}\label{eq-initial-estimates-for-bold-w-1}
w(x,0)\in L^{\frac{m+1}{2}}\left(\R^n\right).
\end{equation}
By \eqref{eq-energy-type-inequality-of-bold-w-equal-standard-norm-of-bold-u} and \eqref{eq-initial-estimates-for-bold-w-1},
\begin{equation*}
\int_{0}^{\infty}\int_{\R^n}\left(w^{\frac{m-1}{2}}\left|\nabla u^i\right|\right)^2\,dxdt \leq\sum_{i=1}^{k}\int_{0}^{\infty}\int_{\R^n}\left(w^{\frac{m-1}{2}}\left|\nabla u^i\right|\right)^2\,dxdt \leq \frac{2}{m\left(m+1\right)}\left\|w(\cdot,0)\right\|^{\frac{2}{m+1}}_{L^{\frac{m+1}{2}}\left(\R^n\right)}<\infty \qquad \forall 1\leq i\leq k
\end{equation*}
and the lemma follows.
\end{proof}

As a consequence of Lemma \ref{lem-space-where-weak-solution-belongs-to-1},  we can have the $L^1$ mass conservation. The proof is almost same as one of Lemma 2.6 of \cite{KL1}.
\begin{lemma}[\textbf{Mass Conservation}]\label{lem-conservation-law-of-L-1-mass}
Let $m>1$ and let $\bold{u}=\left(u^1,\cdots,u^k\right)$ be a weak solution of \eqref{eq-main-equation-of-system}  with initial data $u_0^i\in L^1\left(\R^n\right)\cap L^{m+1}\left(\R^n\right)$. Then for every $t>0$, we have
\begin{equation*}
\int_{\R^n}u^i(x,t)\,dx=\int_{\R^n}u_0^i\,dx \qquad \forall t>0,\,\,1\leq i\leq k.
\end{equation*}
\end{lemma}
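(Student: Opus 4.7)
The plan is to test the weak formulation \eqref{eq-identity--of-formula-for-weak-solution} against a product of a spatial cutoff and a sharp-in-time cutoff, and exploit finite speed of propagation from Lemma \ref{eq-lem-boundedness-of-support-of-solution} to kill the diffusion term.

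Fix $t_0\in(0,T)$ and fix $i\in\{1,\dots,k\}$. By Lemma \ref{eq-lem-boundedness-of-support-of-solution}, there exists $R_0=R_0(t_0)>0$ such that $\mathrm{supp}\,|\bold{u}|(\cdot,s)\subset B_{R_0}$ for every $s\in[0,t_0]$, and in particular $\mathrm{supp}\,u^i(\cdot,s)\subset B_{R_0}$ uniformly on that time interval. Choose a spatial cutoff $\psi\in C^{\infty}_c(\R^n)$ with $\psi\equiv 1$ on $B_{R_0+1}$, $\psi\equiv 0$ outside $B_{R_0+2}$, and $0\le\psi\le 1$. For $\varepsilon\in(0,\min\{t_0/2,(T-t_0)/2\})$ let $\eta_\varepsilon\in C^{1}([0,T])$ be a monotone approximation of $\chi_{[0,t_0]}$, i.e.\ $\eta_\varepsilon(0)=1$, $\eta_\varepsilon(T)=0$, $\eta_\varepsilon\equiv 1$ on $[0,t_0-\varepsilon]$ and $\eta_\varepsilon\equiv 0$ on $[t_0+\varepsilon,T]$, with $\eta_\varepsilon'\le 0$. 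The product $\varphi_\varepsilon(x,s)=\psi(x)\eta_\varepsilon(s)$ is an admissible test function in \eqref{eq-identity--of-formula-for-weak-solution}.

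Substituting $\varphi_\varepsilon$ yields
\begin{equation*}
\int_0^T\!\!\int_{\R^n} m|\bold{u}|^{m-1}\nabla u^i\cdot\nabla\psi\,\eta_\varepsilon\,dxds\;-\;\int_0^T\!\!\int_{\R^n} u^i\,\psi\,\eta_\varepsilon'\,dxds\;=\;\int_{\R^n} u_0^i(x)\psi(x)\,dx.
\end{equation*}
The first integral vanishes identically for every $\varepsilon$, because $\nabla\psi\equiv 0$ on $B_{R_0+1}$ while $|\bold{u}|(\cdot,s)\equiv 0$ outside $B_{R_0}\subset B_{R_0+1}$ for $s\in[0,t_0]$, and $\eta_\varepsilon\equiv 0$ for $s\ge t_0+\varepsilon$ (in the transition strip $[t_0,t_0+\varepsilon]$ the $L^2$-integrability $|\bold{u}|^{m-1}|\nabla u^i|\in L^2$ from Lemma \ref{lem-space-where-weak-solution-belongs-to-1} keeps the contribution absolutely convergent, and letting $\varepsilon\to 0$ kills it in any case). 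For the second integral, $-\eta_\varepsilon'$ is a nonnegative approximation of the Dirac mass $\delta_{t_0}$ of total mass one; using continuity in time of $\int_{\R^n} u^i(x,s)\psi(x)\,dx$ (which follows from the weak formulation together with Lemma \ref{eq-lemma-L-infty-bound-of-bold-u-absolute-value}, or simply from dominated convergence against a Lebesgue point of the $L^1$-valued map $s\mapsto u^i(\cdot,s)$), we pass to the limit $\varepsilon\downarrow 0$ and obtain
\begin{equation*}
\int_{\R^n} u^i(x,t_0)\,\psi(x)\,dx\;=\;\int_{\R^n} u^i_0(x)\,\psi(x)\,dx.
\end{equation*}
Since $\psi\equiv 1$ on $B_{R_0+1}$, which contains both $\mathrm{supp}\,u^i(\cdot,t_0)$ and $\mathrm{supp}\,u^i_0$, the cutoff $\psi$ can be removed from both sides, giving the claimed mass conservation for $t=t_0$.

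The only real obstacle is the justification that $s\mapsto\int u^i(\cdot,s)\psi\,dx$ has a value at the specific time $t_0$ that agrees with the limit of its time averages against $-\eta_\varepsilon'$; this is a standard consequence of having a weak solution with $u^i\in L^\infty_{\mathrm{loc}}((0,\infty);L^1(\R^n))$ (Lemmas \ref{eq-lemma-L-infty-bound-of-bold-u-absolute-value} and \ref{eq-lem-boundedness-of-support-of-solution}), so one may either argue through Lebesgue points or upgrade $u^i$ to $C([0,\infty);L^1(\R^n))$ using the weak formulation itself, exactly as in Lemma 2.6 of \cite{KL1}. The finiteness of the support, supplied by Lemma \ref{eq-lem-boundedness-of-support-of-solution}, is what makes the spatial boundary term disappear without any quantitative decay estimate at infinity.
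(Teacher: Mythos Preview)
Your argument is correct and follows essentially the same route the paper indicates: the paper does not spell out a proof but simply remarks that mass conservation is a consequence of the finite speed of propagation in Lemma~\ref{eq-lem-boundedness-of-support-of-solution} and that the details are ``almost same as one of Lemma~2.6 of \cite{KL1}'', which is precisely the spatial-cutoff-plus-time-cutoff computation you carry out. Your handling of the transition strip $[t_0,t_0+\varepsilon]$ via the $L^2$ bound from Lemma~\ref{lem-space-where-weak-solution-belongs-to-1} is a clean way to close the argument; alternatively one could have chosen $R_0=R_0(t_0+\varepsilon_0)$ for a fixed $\varepsilon_0$ so that the diffusion term vanishes identically, but either variant is fine.
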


We now are ready for the existence of weak solution of \eqref{eq-main-equation-of-system}.
\begin{thm}[\textbf{Existence}]\label{thm-existence-of-weak-solution-u-i-of-system}
Let $m>1$. For each $1\leq i\leq k$, let $u_0^i\in L^1\left(\R^n\right)\cap L^{m+1}\left(\R^n\right)$ be a function compactly supported in a bounded set of $\R^n$. Then there exists a weak solution $\bold{u}=\left(u^1,\cdots,u^k\right)$ of \eqref{eq-main-equation-of-system}.
\end{thm}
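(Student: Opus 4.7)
\medskip

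\noindent\textbf{Proof proposal.} The plan is a two-stage approximation scheme that turns the system into a collection of linear parabolic equations at each step and then removes the regularization. For $\varepsilon>0$, first regularize the initial data to obtain $u_0^{i,\varepsilon}\in C_c^\infty(\R^n)$ with $u_0^{i,\varepsilon}\geq 0$, $u_0^{i,\varepsilon}\to u_0^i$ in $L^1\cap L^{m+1}$, and all $u_0^{i,\varepsilon}$ supported in a fixed ball independent of $\varepsilon$. Second, replace the degenerate coefficient by the non-degenerate one
\begin{equation*}
\mathcal{A}_\varepsilon(\mathbf{v})(x,t)=m\bigl(|\mathbf{v}(x,t)|^2+\varepsilon\bigr)^{\frac{m-1}{2}},
\end{equation*}
which is bounded above (by the $L^\infty$ bound one will get) and below by $m\varepsilon^{(m-1)/2}>0$. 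Given $\mathbf{v}=(v^1,\ldots,v^k)$ in an appropriate space, define $\Phi(\mathbf{v})=\mathbf{u}=(u^1,\ldots,u^k)$ by requiring each component $u^i$ to solve the linear, uniformly parabolic equation $u^i_t=\nabla\cdot(\mathcal{A}_\varepsilon(\mathbf{v})\nabla u^i)$ with initial datum $u_0^{i,\varepsilon}$, which is well posed and uniquely solvable by Lemma~2.1. The regularized problem will be solved by showing that $\Phi$ has a fixed point $\mathbf{u}^\varepsilon$ on a small time interval $[0,T_0]$ via the Banach contraction principle in $C([0,T_0];L^2(\R^n))^k$ (the contraction estimate follows from an energy inequality applied to the difference of two iterates, using that $\mathcal{A}_\varepsilon$ is Lipschitz in $\mathbf{v}$ on bounded sets), and then iterating in time to cover $[0,\infty)$.

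Next I would derive $\varepsilon$-uniform a priori bounds for the family $\{\mathbf{u}^\varepsilon\}$. Because $\overline{w}^\varepsilon:=|\mathbf{u}^\varepsilon|$ still satisfies the subsolution inequality \eqref{eq-equation-for-bold-v-sub-equal-standard-PME} up to an $\varepsilon$-perturbation, Lemma~\ref{eq-lemma-L-infty-bound-of-bold-u-absolute-value} yields a uniform $L^\infty$ bound $\|\mathbf{u}^\varepsilon(\cdot,t)\|_\infty\leq Ct^{-a_1}$, and Lemma~\ref{eq-lem-boundedness-of-support-of-solution} (through comparison with the scalar PME Barenblatt solution of mass $\|{|\mathbf{u}_0^\varepsilon|}\|_{L^1}$) confines $\supp u^{i,\varepsilon}(\cdot,t)$ to a ball $B_{R(t)}$ independent of $\varepsilon$. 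Multiplying the equation for $(|\mathbf{u}^\varepsilon|^2+\varepsilon)$ by itself and integrating, as in Lemma~\ref{lem-space-where-weak-solution-belongs-to-1}, produces the uniform energy estimate
\begin{equation*}
\sum_{i=1}^k\int_0^T\!\!\int_{\R^n}\bigl(|\mathbf{u}^\varepsilon|^2+\varepsilon\bigr)^{m-1}|\nabla u^{i,\varepsilon}|^2\,dxdt\leq C,
\end{equation*}
and hence control of $\nabla[(|\mathbf{u}^\varepsilon|^2+\varepsilon)^{m/2}]$ in $L^2$. Together with a bound on $(u^{i,\varepsilon})_t$ in $L^2(0,T;H^{-1}(B_{R(T)}))$ obtained directly from the equation, the Aubin--Lions lemma delivers strong compactness of $\{u^{i,\varepsilon}\}$ in $L^p(\R^n\times(0,T))$ for suitable $p$. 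Mass conservation (Lemma~\ref{lem-conservation-law-of-L-1-mass}) tightens the masses in $L^1$.

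It then remains to pass to the limit $\varepsilon\to 0$ in the weak formulation \eqref{eq-identity--of-formula-for-weak-solution}. Extracting a subsequence, $u^{i,\varepsilon}\to u^i$ strongly in $L^q_{\text{loc}}$ and a.e., $|\mathbf{u}^\varepsilon|^{m-1}\to|\mathbf{u}|^{m-1}$ a.e. and strongly in $L^r_{\text{loc}}$ for $r<(2m-2)^{-1}$ type exponents (by the uniform $L^\infty$ bound and dominated convergence on bounded sets), while $(|\mathbf{u}^\varepsilon|^2+\varepsilon)^{(m-1)/2}\nabla u^{i,\varepsilon}$ converges weakly in $L^2$ to some limit; the product of the strong and weak limits is identified with $|\mathbf{u}|^{m-1}\nabla u^i$. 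The main obstacle is precisely this identification of the nonlinear flux: the coupling of all components through $|\mathbf{u}|$ means the standard scalar PME monotonicity/Minty argument is not directly available, and one has to combine the strong convergence of $|\mathbf{u}^\varepsilon|$ with the weak convergence of $\nabla u^{i,\varepsilon}$ component by component, using that $|\mathbf{u}^\varepsilon|^{(m-1)/2}\nabla u^{i,\varepsilon}=\nabla\bigl(|\mathbf{u}^\varepsilon|^{(m-1)/2}u^{i,\varepsilon}\bigr)-u^{i,\varepsilon}\nabla|\mathbf{u}^\varepsilon|^{(m-1)/2}$ to transfer derivatives and invoke the compactness of $|\mathbf{u}^\varepsilon|^{(m-1)/2}$ in $L^2$ gained from the energy estimate. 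Once the flux limit is identified, the function-space requirement and weak formula in the definition of weak solution follow from lower semicontinuity, finishing the proof.
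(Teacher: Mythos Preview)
Your proposal follows essentially the same route as the paper: regularize the degenerate coefficient, derive $\varepsilon$-uniform energy estimates (in particular bounds on $|\mathbf{u}^\varepsilon|^{m-1}|\nabla u^{i,\varepsilon}|$ and on $\nabla|\mathbf{u}^\varepsilon|^{m}$ in $L^2$), extract a limit, and identify the nonlinear flux through the product-rule decomposition you wrote at the end. The paper's regularization is slightly different---it adds $\varepsilon$ additively, working with $m(w^{(m-1)/2}+\varepsilon)$, and truncates the data rather than mollifying---and it simply cites \cite{LSU} for the approximate system rather than running a Banach fixed point; your use of Aubin--Lions for strong compactness is in fact more explicit than what the paper writes. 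These are cosmetic differences.

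The one place where your sketch has a genuine gap is the mechanism that makes the flux identification work. The decomposition is right, but ``compactness of $|\mathbf{u}^\varepsilon|^{(m-1)/2}$ in $L^2$'' is not the correct handle for the term $u^{i,\varepsilon}\nabla|\mathbf{u}^\varepsilon|^{m-1}$: the energy estimate controls $\nabla|\mathbf{u}^\varepsilon|^{m}$ in $L^2$, not $\nabla|\mathbf{u}^\varepsilon|^{m-1}$, and the latter is singular at the free boundary, so neither factor in the product $u^{i,\varepsilon}\cdot\nabla|\mathbf{u}^\varepsilon|^{m-1}$ lies in a space that pairs with the other. The paper's device is to rewrite
\[
u^{i,\varepsilon}\,\nabla|\mathbf{u}^\varepsilon|^{m-1}=\frac{m-1}{m}\,\Bigl(\frac{u^{i,\varepsilon}}{|\mathbf{u}^\varepsilon|}\Bigr)\,\nabla|\mathbf{u}^\varepsilon|^{m}
\]
and use that the ratio $u^{i,\varepsilon}/|\mathbf{u}^\varepsilon|$ is pointwise bounded by $1$; hence it converges (up to a subsequence) in $L^2_{\mathrm{loc}}$ to some $G$, and one checks a posteriori that $u^{i}=|\mathbf{u}|\,G$ in the limit, so the two pieces recombine to $|\mathbf{u}|^{m-1}\nabla u^{i}$. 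That bounded-ratio observation is the one genuinely nonlinear ingredient you should add to your outline. (A small slip as well: the exponent in your decomposition should be $m-1$, not $(m-1)/2$, to match the diffusion coefficient $|\mathbf{u}|^{m-1}$.)
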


\begin{proof}
For any $M>1$ and $0<\epsilon<1$, let
\begin{equation*}
u^i_{0,M}(x)=\min\left(u^i_0(x),M\right) \qquad \forall x\in\R^n,\,\,1\leq i\leq k.
\end{equation*} 
Then by standard theory for non-degenerate parabolic equation \cite{LSU}, for any $M>1$ and $0<\epsilon<1$ there exists the solution $u^i_{\epsilon,M}$, $\left(1\leq i\leq k\right)$, of 
\begin{equation}\label{eq-for-existence-non-degenerate-pde-with-bold-w-epsilon-M-diffusion-coefficients}
\begin{cases}
\begin{aligned}
u_t&=\nabla\cdot\left(m\left(w_{\epsilon,M}^{\frac{m-1}{2}}+\epsilon\right)\nabla u\right) \qquad \mbox{in $\R^n\times(0,\infty)$}\\
u(x,0)&=u^i_{0,M}(x) \qquad \qquad  \forall x\in\R^n
\end{aligned}
\end{cases}
\end{equation}
where
\begin{equation*}
w_{\epsilon,M}(x,t)=\sum_{i=1}^{k}\left(u^{i}_{\epsilon,M}(x,t)\right)^2 \qquad \forall (x,t)\in\R^n\times\left[0,\infty\right).
\end{equation*}
By direct computation, $w_{\epsilon,M}$ satisfies
\begin{align}
\left(w_{\epsilon,M}\right)_t&=\nabla\cdot\left(m\left(w^{\frac{m-1}{2}}_{\epsilon,M}+\epsilon\right)\nabla w_{\epsilon,M}\right)-2m\left(w_{\epsilon,M}^{\frac{m-1}{2}}+\epsilon\right)\sum_{i=1}^{k}\left|\nabla u^i_{\epsilon,M}(x,t)\right|^2\notag\\
&\leq\nabla\cdot\left(mw^{\frac{m-1}{2}}_{\epsilon,M}\nabla w_{\epsilon,M}\right)+\epsilon m\La w_{\epsilon,M}-F \qquad \mbox{in $\R^n\times(0,\infty)$} \label{eq-for-bold-w-epsilon-M}
\end{align}
where
\begin{equation*}
F(x,t)=2mw_{\epsilon,M}^{\frac{m-1}{2}}\sum_{i=1}^{k}\left|\nabla u^i_{\epsilon,M}(x,t)\right|^2\geq 0 \qquad \forall (x,t)\in\R^n\times(0,\infty).
\end{equation*}
By the non-degeneracy of diffusion coefficients $w_{\epsilon,M}$, one can easily check that 
\begin{equation*}
0\leq u^i_{\epsilon,M}(x,t)\leq M \qquad \forall (x,t)\in\R^n\times[0,\infty)
\end{equation*}
and
\begin{equation*}
u^i_{\epsilon,M}(x,t)\to 0 \qquad \forall t>0\qquad \mbox{as $|x|\to\infty$}.
\end{equation*}
Multiplying the first equation of \eqref{eq-for-existence-non-degenerate-pde-with-bold-w-epsilon-M-diffusion-coefficients} by $u^i_{\epsilon,M}$ and integrating over $\R^n\times(0,\infty)$, we have
\begin{equation}\label{eq-estimates-of-L-2-of-u-i-and-H-1-of--sqrt-epsilon-u-i}
\begin{aligned}
\frac{1}{2}\int_{\R^n}\left(u_{\epsilon,M}^i(x,t)\right)^2\,dx+m\int_{0}^{\infty}\int_{\R^n}\left(\epsilon^{\frac{1}{2}}\left|\nabla u^i_{\epsilon,M}\right|\right)^2\,dxdt\leq \frac{1}{2}\int_{\R^n}\left(u_0^i\right)^2\,dx \qquad \forall t>0.
\end{aligned}
\end{equation}
By an argument similar to the proof of Lemma \ref{lem-space-where-weak-solution-belongs-to-1}, there exists an uniform constant $C>0$ such that
\begin{equation}\label{eq-estimates-for-u-epsilon-M-similar-to-weak-solution-space}
\int_0^{\infty}\int_{\R^n}\left|\nabla w_{\epsilon,M}^{\frac{m}{2}}\right|^2\,dxdt+\int_0^{\infty}\int_{\R^n}\left(w_{\epsilon,M}^{\frac{m-1}{2}}\left|\nabla u_{\epsilon,M}^i\right|\right)^2\,dxdt\leq C\sum_{i=1}^k\left\|u^i_0\right\|^{1+m}_{L^{1+m}\left(\R^n\right)}\qquad \forall 1\leq i\leq k.
\end{equation}
\indent By \eqref{eq-estimates-for-u-epsilon-M-similar-to-weak-solution-space} and Young's inequality,
\begin{align}
\int_{0}^{\infty}\int_{\R^n}\left|\nabla \left(w_{\epsilon,M}^{\frac{m-1}{2}}u_{\epsilon,M}^i\right)\right|^2\,dxdt&\leq 2\left(\int_{0}^{\infty}\int_{\R^n}\left(u_{\epsilon,M}^i\left|\nabla w_{\epsilon,M}^{\frac{m-1}{2}}\right|\right)^2\,dxdt+\int_{0}^{\infty}\int_{\R^n}\left(w_{\epsilon,M}^{\frac{m-1}{2}}\left|\nabla u_{\epsilon,M}^i\right|\right)^2\,dxdt\right)\notag\\
&\leq 2\left(\left(\frac{m-1}{m}\right)^2\int_{0}^{\infty}\int_{\R^n}\left|\nabla w_{\epsilon,M}^{\frac{m}{2}}\right|^2\,dxdt+\int_{0}^{\infty}\int_{\R^n}\left(w_{\epsilon,M}^{\frac{m-1}{2}}\left|\nabla u_{\epsilon,M}^i\right|\right)^2\,dxdt\right)\notag\\
&\leq C\sum_{i=1}^k\left\|u^i_0\right\|^{1+m}_{L^{1+m}\left(\R^n\right)}\qquad \forall 1\leq i\leq k\label{eq-estimate-of-gradient-of-bold-w-to-alpha-1-and-u-i-in-L-2}
\end{align}
for some constant $C>0$.\\
By \eqref{eq-estimates-of-L-2-of-u-i-and-H-1-of--sqrt-epsilon-u-i}, 
\begin{equation*}
u^{i}_{\epsilon, M}\in L^{\infty}_t\left(L^2_x\right)\subset L^{2}_{loc}\left(\R^n\times(0,\infty)\right) \qquad \forall 1\leq i\leq k.
\end{equation*}
Thus, $u^{i}_{\epsilon,M}$ converges in $L^{2}_{loc}\left(\R^n\times(0,\infty)\right)$ to some function $u^i$ as $\epsilon\to 0$ and $M\to\infty$. More precisely we can get
\begin{equation}\label{eq-L-1-loc-convergence-of-u-i-and-bold-w}
u^i_{\epsilon,M}\to u^i \quad \mbox{in $L^2_{loc}\cap L^{1+m}_{loc}$} \qquad \mbox{and}\qquad w_{\epsilon,M}\to w=\sum_{i=1}^{k}\left(u^i\right)^2 \quad \mbox{in $L^1_{loc}\cap L^{\frac{1+m}{2}}_{loc}$}
\end{equation}
as $\epsilon\to 0$ and $M\to\infty$.\\
\indent We now are going to show that the function $u^i$ satisfies the weak formula \eqref{eq-identity--of-formula-for-weak-solution}. Since $0\leq u^i_{\epsilon,M}w_{\epsilon,M}^{-\frac{1}{2}}\leq 1$ in $\R^n\times[0,\infty)$ for any $0<\epsilon<1$ and $M>1$, there exists a function $G\in L^{\infty}\left(\R^n\times[0,\infty)\right)$ such that
\begin{equation}\label{eq-convergence-of-ratio-of-u-epsilon-M-and-sqrt-bold-w-epsilon-M}
u^i_{\epsilon,M}w_{\epsilon,M}^{-\frac{1}{2}}\to G \qquad \mbox{in $L^2_{loc}\left(\R^n\times(0,\infty)\right)$} \qquad \mbox{as $\epsilon\to 0$, \,\,$M\to\infty$}.
\end{equation} 
By \eqref{eq-L-1-loc-convergence-of-u-i-and-bold-w}, \eqref{eq-convergence-of-ratio-of-u-epsilon-M-and-sqrt-bold-w-epsilon-M} and H\"older inequality,
\begin{equation}\label{eq-for-equality-between-u-i-and-G-times-sqrt-bold-w}
\begin{aligned}
&\iint_{\mathcal{K}}\left|u^i_{\epsilon,M}-w^{\frac{1}{2}}G\right|^2\,dxdt\\
&\qquad \qquad \leq 2\iint_{\mathcal{K}}w_{\epsilon,M}\left(\frac{u^i_{\epsilon,M}}{w^{\frac{1}{2}}_{\epsilon,M}}-G\right)^2\,dxdt+2\iint_{\mathcal{K}}G^2\left(w_{\epsilon,M}^{\frac{1}{2}}-w^{\frac{1}{2}}\right)^2\,dxdt\\
&\qquad \qquad \leq 2^{2+\frac{2}{1+m}}\left(\iint_{\mathcal{K}}w^{\frac{1+m}{2}}_{\epsilon,M}\,dxdt\right)^{\frac{2}{1+m}}\left(\iint_{\mathcal{K}}\left(\frac{u^i_{\epsilon,M}}{w^{\frac{1}{2}}_{\epsilon,M}}-G\right)^2\,dxdt\right)^{\frac{m-1}{1+m}}\\
&\qquad \qquad \qquad \qquad +2\iint_{\mathcal{K}}\left(w_{\epsilon,M}^{\frac{1}{2}}-w^{\frac{1}{2}}\right)^2\,dxdt\\
&\qquad \qquad \to 0 \qquad \qquad \mbox{as $\epsilon\to 0$ and $M\to \infty$ }
\end{aligned}
\end{equation}
for any compact subset $\mathcal{K}$ of $\R^n\times(0,\infty)$. By \eqref{eq-L-1-loc-convergence-of-u-i-and-bold-w} and \eqref{eq-for-equality-between-u-i-and-G-times-sqrt-bold-w},
\begin{equation}\label{eq-equality-between-u-i-and-G-times-sqrt-bold-w-in-L-2-loc-sense}
u^i\equiv w^{\frac{1}{2}}G \qquad \mbox{in the $L^{2}_{loc}$ sense} \qquad \forall 1\leq i\leq k.
\end{equation}
Let $\vp\in C^{\infty}_0\left(\R^n\times(0,\infty)\right)$ be a test function. Multiplying the first equation of \eqref{eq-for-existence-non-degenerate-pde-with-bold-w-epsilon-M-diffusion-coefficients} by $\vp$ and integrating over $\R^n\times(0,\infty)$, we have
\begin{equation}\label{eq-for-weak-concept-of-solution-type-1}
\begin{aligned}
&m\int_{0}^{\infty}\int_{\R^n}\nabla\left(w_{\epsilon,M}^{\frac{m-1}{2}}u_{\epsilon,M}^i\right)\cdot\nabla\vp\,dxdt-(m-1)\int_{0}^{\infty}\int_{\R^n}u_{\epsilon,M}^iw^{-\frac{1}{2}}_{\epsilon,M}\nabla w_{\epsilon,M}^{\frac{m}{2}}\cdot\nabla\vp\,dxdt\\
&\qquad \qquad +m\epsilon^{\frac{1}{2}}\int_{0}^{\infty}\int_{\R^n}\left(\epsilon^{\frac{1}{2}}\nabla u_{\epsilon,M}^i\right)\cdot\nabla\vp\,dxdt-\int_{0}^{\infty}\int_{\R^n}u^i_{\epsilon,M}\,\vp_t\,dxdt=0.
\end{aligned}
\end{equation}
By \eqref{eq-estimates-for-u-epsilon-M-similar-to-weak-solution-space}, \eqref{eq-estimate-of-gradient-of-bold-w-to-alpha-1-and-u-i-in-L-2} and \eqref{eq-L-1-loc-convergence-of-u-i-and-bold-w}, as $\epsilon\to 0$ and $M\to\infty$,
\begin{equation}\label{eq-convergence-of-nabla-bold-w-alpha-1-and-nabla-w-alpha-u}
\begin{cases}
\begin{aligned}
u_{\epsilon,M}^i&\to u^i \qquad \qquad \mbox{ in $L_{loc}^2\left(\R^n\times(0,\infty)\right)$} \quad \forall 1\leq i\leq k\\
\nabla w_{\epsilon,M}^{\frac{m}{2}}&\to \nabla w^{\frac{m}{2}} \qquad \qquad \mbox{in $L_{loc}^2\left(\R^n\times(0,\infty)\right)$}\\
\nabla\left(w_{\epsilon,M}^{\frac{m-1}{2}}u^i_{\epsilon,M}\right)&\to \nabla\left(w^{\frac{m-1}{2}}u^i\right) \qquad \mbox{in $L_{loc}^2\left(\R^n\times(0,\infty)\right)$} \quad \forall 1\leq i\leq k.
\end{aligned}
\end{cases}
\end{equation}
Letting $\epsilon\to 0$ and then $M\to\infty$ in \eqref{eq-for-weak-concept-of-solution-type-1}, by \eqref{eq-estimates-of-L-2-of-u-i-and-H-1-of--sqrt-epsilon-u-i}, \eqref{eq-convergence-of-ratio-of-u-epsilon-M-and-sqrt-bold-w-epsilon-M} and \eqref{eq-convergence-of-nabla-bold-w-alpha-1-and-nabla-w-alpha-u} $u^i$, $(1\leq i\leq k)$, satisfies
\begin{equation}\label{eq-for-weak-concept-of-solution-type-1-after-limits}
\begin{aligned}
&m\int_{0}^{\infty}\int_{\R^n}\nabla\left(w^{\frac{m-1}{2}}u^i\right)\cdot\nabla\vp\,dxdt\\
&\qquad \qquad-(m-1)\int_{0}^{\infty}\int_{\R^n}G\,\nabla w^{\frac{m}{2}}\cdot\nabla\vp\,dxdt-\int_{0}^{\infty}\int_{\R^n}u^i\vp_t\,dxdt=0. \qquad \forall 1\leq i\leq k.
\end{aligned}
\end{equation}
By \eqref{eq-equality-between-u-i-and-G-times-sqrt-bold-w-in-L-2-loc-sense} and \eqref{eq-for-weak-concept-of-solution-type-1-after-limits}, we can get
\begin{equation*}\label{eq-for-weak-concept-of-solution-type-1-after-limits-0}
m\int_{0}^{\infty}\int_{\R^n}w^{\frac{m-1}{2}}\nabla u^i\cdot\nabla\vp\,dxdt -\int_{0}^{\infty}\int_{\R^n}u^i\vp_t\,dxdt=0.\qquad \forall 1\leq i\leq k .
\end{equation*}
To complete the proof, we now are going to show that 
\begin{equation}\label{eq-converges-of-u-to-initial-data-as-t-to-zero}
u^i\left(\cdot,t\right)\to u^i_0\qquad  \mbox{in $L^1$ as $t\to 0^+$},\,\,\forall 1\leq i\leq k.
\end{equation} 
Let $\eta(x)\in C_0^{2}\left(\R^n\right)$. Let $0<t<1$. Multiply the first equation of \eqref{eq-for-existence-non-degenerate-pde-with-bold-w-epsilon-M-diffusion-coefficients} by $\eta$ and integrate it over $\R^n\times(0,t)$. Then, by an argument similar to the proof of Lemma \ref{lem-space-where-weak-solution-belongs-to-1} we have
\begin{equation}\label{compare-between-u-and-u-0-with-eta23579}
\begin{aligned}
&\left|\int_{\R^n}u_{\epsilon,M}(x,t)\eta(x)\,dx-\int_{\R^n}u_{0,\epsilon,M}(x)\eta(x)\,dx\right|\\
&\qquad \qquad  \leq \int_{0}^{t}\int_{\R^n}w_{\epsilon,M}^{\frac{m-1}{2}}(x,t)\left|\nabla u_{\epsilon,M}(x,t)\right|\left|\nabla\eta(x)\right|\,dxdt\\
&\qquad \qquad \leq C\left(\left\|\bold{u}_0\right\|_{L^{1+m}\left(\R^n\right)},\left\|\nabla\eta\right\|_{L^{\infty}}\right)\sqrt{t},\qquad \forall 0<t<1.
\end{aligned}
\end{equation}
Thus, letting $M\to\infty$, $\epsilon\to 0$ and then $t\to 0$ in \eqref{compare-between-u-and-u-0-with-eta23579}, the claim follows. Therefore, $u$ is a weak solution of \eqref{eq-main-equation-of-system} and the lemma follows.
\end{proof}

\subsection{Local H\"older Continuity.}
As the second step of this section, we are going to give an explanation about the local h\"older  continuity of solution $\bold{u}$ of \eqref{eq-main-equation-of-system}. Let $(x_0,t_0)\in\R^n\times(0,\infty)$ and consider the cylinder
\begin{equation*}\label{eq-local-set-for-local-contiunity-with-radius-R}
\left(x_0,t_0\right)+Q\left(R_0,R_0^{2-\epsilon}\right)=\left(x_0,t_0\right)+B_{R}\times\left(-R_0^{2-\epsilon},0\right)\subset \R^n\times(0,\infty),\qquad \left(0<R_0\leq 1\right)
\end{equation*}
where $\epsilon>0$ is a sufficiently small number and $B_R$ is the ball centered at $x=0$ of radius $R>0$. For the simplicity, we may assume without loss of generality that $(x_0,t_0)=(0,0)$.\\
\indent By Lemma \ref{eq-lemma-L-infty-bound-of-bold-u-absolute-value}, there exists uniform number $\Lambda>0$ such that
\begin{equation*}
u^i\leq \left|\bold{u}\right|\leq \Lambda<\infty \qquad \forall (x,t)\in Q\left(R_0,R_0^{2-\epsilon}\right),\,\,1\leq i\leq k
\end{equation*}
Hence, for each $1\leq i\leq k$ we can define
\begin{equation*}\label{eq-definition-mu-+-and-mu---and-omega-for-is}
\left(\mu^i\right)^+=\esssup_{Q(R_0,R_0^{2-\epsilon})} u^i, \qquad \left(\mu^i\right)^-=\essinf_{Q(R_0,R_0^{2-\epsilon})} u^i, \qquad \omega^i=\osc_{Q(R_0,R_0^{2-\epsilon})} u^i=\left(\mu^i\right)^+-\left(\mu^i\right)^-. 
\end{equation*}
Since $\left|\bold{u}\right|\geq u^i$ for all $1\leq i\leq k$, the equation $\left(u^i\right)_t=\nabla\left(\left|\bold{u}\right|^{m-1}\nabla u^i\right)$ is uniformly parabolic in $Q\left(R_0,R_0^{2-\epsilon}\right)$ if $\left(\mu^i\right)^->0$ for some $1\leq i\leq k$. Then, by standard regularity theory for the parabolic equation \cite{LSU} local H\"older continuity of solution follows. Hence, we can assume that 
\begin{equation*}
\left(\mu^i\right)^-=0 \qquad \forall 1\leq i\leq k.
\end{equation*}
Moreover, if $\left(\mu^{i}\right)^+=0$ for some $1\leq i\leq k$ then
\begin{equation*}
u^{i}\equiv 0 \qquad \mbox{on $Q\left(R_0,R_0^{2-\epsilon}\right)$}.
\end{equation*}
This means that we don't need to consider the effect of $u^{i}$ in studying the evolution of solution $\bold{u}$ in that region. Hence, we also assume that
\begin{equation}\label{eq-non-degeneracy-of-omega-i-s-34}
\omega^{\,i}=\left(\mu^i\right)^+>0 \qquad \forall 1\leq i\leq k.
\end{equation}
Let 
\begin{equation*}
\omega_{_M}=\max_{1\leq i\leq k}\omega^{\,i}
\end{equation*}
and construct the cylinder
\begin{equation}\label{eq-construction-of-cylinder-with-some-constant-A-which-is-bigger}
Q\left(R,\theta^{-\alpha_0}R^2\right)=B_{R}\times\left(-\theta^{-\alpha_0}R^2,0\right) \qquad \left(\theta=\frac{\omega_{_M}}{4},\,\,\alpha_0=m-1\right).
\end{equation}
Assume that the radius $0<R<R_0$ is sufficiently small that
\begin{equation}\label{eq-condition-between-R-and-theta-alpha--1}
\theta^{\alpha_{0}}>R^{\epsilon}.
\end{equation}
By \eqref{eq-construction-of-cylinder-with-some-constant-A-which-is-bigger} and \eqref{eq-condition-between-R-and-theta-alpha--1}, 
\begin{equation*}
Q\left(R,\theta^{-\alpha_{0}}R^2\right)\subset Q\left(R,R^{2-\epsilon}\right) \subset Q\left(R_0,R_0^{2-\epsilon}\right)  \qquad \forall 1\leq i\leq k.
\end{equation*}
and
\begin{equation*}
\osc_{Q\left(R,\theta^{-\alpha_{0}}R^2\right)}u^i\leq \omega_{_{M}}=4\theta, \qquad \forall 1\leq i\leq k 
\end{equation*}
and
\begin{equation}\label{eq-upper-bound-of-absolute-of-bold-u-by-omega-M-and-k}
\left|\bold{u}\right|\leq 4\sqrt{k}\,\theta \qquad \mbox{in $Q\left(R,\theta^{-\alpha_{0}}R^2\right)$}.
\end{equation}
Then, by arguments similar to the proofs of the Lemma 3.2 and Lemma 3.11 of \cite{KL1} we can get the following two alternatives.
\begin{lemma}\label{lem-the-first-alternative-for-holder-estimates}
Let $\bold{u}=\left(u^1,\cdots,u^k\right)$ be a weak solution of \eqref{eq-main-equation-of-system}. Then there exists a number $\rho_0>0$ such that if
\begin{equation}\label{eq-first-condition-of-small-region-of-lower-for-holder-estimates}
\left|\left\{\left(x,t\right)\in Q\left(R,\theta^{-\alpha_{0}}R^2\right):u^i(x,t)<\frac{\omega_{_M}}{2}\right\}\right|\leq\rho_0\left|Q\left(R,\theta^{-\alpha_{0}}R^2\right)\right|
\end{equation}
then,
\begin{equation*}
u^i(x,t)>\frac{\omega_{_M}}{4} \qquad \mbox{for all $(x,t)\in Q\left(\frac{R}{2},\theta^{-\alpha_{0}}\left(\frac{R}{2}\right)^2\right)$}.
\end{equation*}
\end{lemma}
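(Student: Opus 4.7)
The plan is to adapt the intrinsic-scaling De Giorgi iteration of DiBenedetto, exactly in the form carried out in Lemma 3.2 of \cite{KL1}. The argument relies on three standard ingredients: a Caccioppoli inequality for truncations of $u^i$ at decreasing levels, a measure-propagation (logarithmic) estimate, and a De Giorgi iteration over nested sub-cylinders. Since the diffusion coefficient $m|\bold{u}|^{m-1}$ of \eqref{eq-main-equation-of-system} satisfies the structural Assumption~I of \cite{KL1}, the mechanics transfer almost verbatim; only the verification of the structural lower bound on the coefficient requires attention.

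To set up the iteration, I would take the decreasing sequence of levels $k_j=\frac{\omega^i}{4}+\frac{\omega^i}{2^{j+2}}$, $j\geq 0$, so $k_0=\omega^i/2$ and $k_j\searrow \omega^i/4$, and the shrinking cylinders $Q_j=Q\bigl(R_j,(\theta^i_0)^{-\alpha_0}R_j^{\,2}\bigr)$ with $R_j=\frac{R}{2}+\frac{R}{2^{j+1}}$. Testing the weak formula \eqref{eq-identity--of-formula-for-weak-solution} for the $i$-th component against $-(k_j-u^i)_+\zeta_j^{\,2}$, where $\zeta_j$ is a smooth cutoff that equals one on $Q_{j+1}$ and vanishes near the parabolic boundary of $Q_j$, and integrating by parts yields the Caccioppoli-type estimate
\begin{equation*}
\esssup_t\int_{B_{R_j}}(k_j-u^i)_+^{\,2}\zeta_j^{\,2}\,dx \;+\;\iint_{Q_j}m|\bold{u}|^{m-1}|\nabla(k_j-u^i)_+|^{\,2}\zeta_j^{\,2}\,dxdt \;\leq\; C\iint_{Q_j}(k_j-u^i)_+^{\,2}\Bigl(|\bold{u}|^{m-1}|\nabla\zeta_j|^{\,2}+\zeta_j|(\zeta_j)_t|\Bigr)\,dxdt.
\end{equation*}

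Combining this with the parabolic Sobolev embedding and the intrinsic time-length $(\theta^i_0)^{-\alpha_0}R^2=(\omega^i/4)^{-(m-1)}R^2$, which is chosen precisely so that the parabolic and elliptic terms balance, one derives a recursive inequality $Y_{j+1}\leq C\,b^{\,j}Y_j^{1+\gamma}$ for the normalised measures $Y_j:=|\{u^i<k_j\}\cap Q_j|/|Q_j|$, with $b>1$ and $\gamma>0$ depending only on $n$, $m$ and $\Lambda$. The standard De Giorgi convergence lemma then forces $Y_j\to 0$, provided $Y_0\leq \rho_0$ for a sufficiently small $\rho_0=\rho_0(n,m,\Lambda,\omega^i)$; this is exactly the hypothesis \eqref{eq-first-condition-of-small-region-of-lower-for-holder-estimates}, and yields $u^i\geq \omega^i/4$ almost everywhere on $Q(R/2,(\theta^i_0)^{-\alpha_0}(R/2)^2)$.

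The main obstacle, compared with the scalar porous medium case, is that $|\bold{u}|^{m-1}$ couples all components of the solution, so a priori one cannot translate a level-set constraint on $u^i$ into a bound on the diffusion coefficient. This is resolved by two elementary observations: the upper bound $|\bold{u}|\leq \Lambda$ from Lemma \ref{eq-lemma-L-infty-bound-of-bold-u-absolute-value} controls the right-hand side of the Caccioppoli inequality, while the pointwise inequality $|\bold{u}|\geq u^i$ furnishes the lower bound $|\bold{u}|^{m-1}\geq (\omega^i/4)^{m-1}=(\theta^i_0)^{\alpha_0}$ on the complementary good set $\{u^i\geq \omega^i/4\}$. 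These two bounds are precisely the non-degeneracy and boundedness required by Assumption~I of \cite{KL1}, so the iteration machinery goes through without essential modification.
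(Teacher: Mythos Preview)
Your proposal is correct and mirrors the paper's approach: the paper does not reprove the lemma but simply observes that the diffusion coefficient $m|\bold{u}|^{m-1}$ satisfies Assumption~I of \cite{KL1} (via the chain $u^i\leq|\bold{u}|\leq\Lambda$) and then invokes Lemma~3.2 of \cite{KL1} directly. Your sketch of the DiBenedetto intrinsic-scaling De~Giorgi iteration, together with your identification of the two structural bounds on the coefficient, is exactly the content that \cite{KL1} supplies.
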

\begin{lemma}\label{lem-the-second-alternative-for-holder-estimates}
Let $\bold{u}=\left(u^1,\cdots,u^k\right)$ be a weak solution of \eqref{eq-main-equation-of-system}. If \eqref{eq-first-condition-of-small-region-of-lower-for-holder-estimates} is violated, then the number $s^{\ast}\in\N$ can be chosen such that
\begin{equation*}
u^i(x,t)\leq \left(1-\frac{1}{2^{s^{\ast}+1}}\right)\omega_{_M} \qquad \mbox{a.e. on $Q\left(\frac{R}{2},\frac{\rho_0}{2}\theta^{-\alpha_{0}}\left(\frac{R}{2}\right)^2\right)$}. 
\end{equation*}
\end{lemma}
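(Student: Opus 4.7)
The statement is the classical ``second alternative'' in the De Giorgi--DiBenedetto oscillation-reduction scheme, set in the intrinsically scaled cylinder \eqref{eq-construction-of-cylinder-with-some-constant-A-which-is-bigger} with $\theta_0^i=\omega^i/4$. Once Lemma \ref{eq-lemma-L-infty-bound-of-bold-u-absolute-value} provides the universal $L^{\infty}$ bound $|\bold{u}|\le\Lambda$, the scalar equation satisfied by $u^i$ has a measurable diffusion coefficient $m|\bold{u}|^{m-1}$ that is bounded above everywhere by $m\Lambda^{m-1}$, while on any set $\{u^i\ge k\}$ the pointwise inequality $|\bold{u}|\ge u^i$ provides a lower bound of order $k^{m-1}$. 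This two-sided control, precisely on the sets that enter the energy inequalities, allows a verbatim adaptation of Lemma 3.10 of \cite{KL1} with $U$ replaced by $|\bold{u}|$.

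\textbf{Key steps.} The argument has three stages. First, since \eqref{eq-first-condition-of-small-region-of-lower-for-holder-estimates} is violated, the set $\{u^i<\omega^i/2\}$ occupies more than a fraction $\rho_0$ of $Q(R,(\theta_0^i)^{-\alpha_0}R^2)$, so a Fubini argument selects a time level $t_{\ast}\in(-(\theta_0^i)^{-\alpha_0}R^2,0)$ at which
\[
|\{x\in B_R:\,u^i(x,t_{\ast})<\omega^i/2\}|\ge (\rho_0/2)\,|B_R|.
\]
Second, for $s\in\mathbb{N}$ set $k_s=(\mu^i)^+-\omega^i/2^s$ and test the equation for $u^i$ against a De Giorgi logarithmic cutoff $\Psi_s$ supported in $\{u^i>k_s\}$; integrating on $B_R\times(t_{\ast},0)$ and using the intrinsic scaling (which absorbs the factor $(\theta_0^i)^{-\alpha_0}$ in the time length via the bound $m|\bold{u}|^{m-1}\le C(\omega^i)^{m-1}$ on the support of $\Psi_s$) propagates the slice estimate of Step 1 to
\[
|\{x\in B_{R/2}:\,u^i(x,t)>k_{s+1}\}|\le\nu(s)\,|B_{R/2}|\qquad\forall\,t\in\bigl(-(\rho_0/2)(\theta_0^i)^{-\alpha_0}(R/2)^2,\,0\bigr),
\]
with $\nu(s)\to 0$ as $s\to\infty$ and constants depending only on $m$, $n$, $\Lambda$ and $\rho_0$. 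Third, choose $s^{\ast}$ so large that $\nu(s^{\ast})$ lies below the universal smallness threshold needed by a De Giorgi type iteration (the analogue of Lemma \ref{lem-the-first-alternative-for-holder-estimates} applied from above), and iterate on the nested levels $k_{s^{\ast}+j}$, $j\ge 0$, inside the shrunken cylinder; this forces $|\{u^i>k_{s^{\ast}+1}\}|=0$ there, which is the asserted pointwise bound.

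\textbf{Main obstacle.} The only genuinely system-specific difficulty is that the coefficient $|\bold{u}|^{m-1}$ depends on the components $u^j$ with $j\ne i$, which are not controlled by the truncation $(u^i-k_s)_+$ that drives the iteration. What rescues the scalar argument is that the coefficient need only be pinned down on the sets $\{u^i\ge k_s\}$ relevant to the iteration toward the supremum: there $u^i\le|\bold{u}|\le\Lambda$ supplies both the upper bound used to absorb the intrinsic time scale and the lower bound needed to close the energy estimate. Making this book-keeping quantitative, so that the constants in the logarithmic estimate and in the De Giorgi iteration remain independent of the other components, is the technical heart of the proof, but it is handled by the same device already used in Lemma 3.10 of \cite{KL1}, since $|\bold{u}|$ and $U=u^1+\cdots+u^k$ are equivalent up to a factor depending only on $k$.
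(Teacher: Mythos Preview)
Your proposal is correct and takes essentially the same approach as the paper: the paper does not give an independent proof but simply observes that the diffusion coefficient $m|\bold{u}|^{m-1}$ satisfies Assumption~I of \cite{KL1} (with the uniform bound $|\bold{u}|\le\Lambda$ from Lemma~\ref{eq-lemma-L-infty-bound-of-bold-u-absolute-value}) and defers to Lemma~3.10 of \cite{KL1}. Your outline---Fubini selection of a good time slice, logarithmic propagation to all later times, and a De~Giorgi iteration from above at levels $(\mu^i)^+-\omega^i/2^s$, together with the observation that $|\bold{u}|$ and $U=u^1+\cdots+u^k$ are equivalent up to a factor depending only on $k$---is exactly that argument transported to the present setting.
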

As mentioned in Section 3 of \cite{KL1}, the constants $\rho_0$ and $s^{\ast}$ is independent of $\omega_1$, $\cdots$, $\omega_k$ and $R$. Thus, by Lemma \ref{lem-the-first-alternative-for-holder-estimates} and Lemma \ref{lem-the-second-alternative-for-holder-estimates} oscillations of all $u^i$, $\left(1\leq i\leq k\right)$ are decreasing in the same proportion, i.e., we can have the following \textbf{Oscillation Lemma}.
\begin{lemma}\label{lem-Oscillation-Lemma} 
There exist numbers $\rho_0$, $\sigma_0\in\left(0,1\right)$ depending only on the $m$, $k$ and $n$ such that 
\begin{equation}\label{eq-inequality-for-Oscillation-Lemma}
\osc_{Q\left(\frac{R}{2},\frac{\rho_0}{2}\theta^{-\alpha_{0}}\left(\frac{R}{2}\right)^{2}\right)}u^i=\sigma_0\omega_{_M}\qquad \forall 1\leq i\leq k.
\end{equation}
\end{lemma}
Let 
\begin{equation*}
R_1=\frac{R}{\sigma_0^{\frac{\alpha_0}{2}}C}=\frac{R}{\overline{C}}
\end{equation*}
where $C$ is a constant satisfying
\begin{equation*}
C\geq \max\left(\frac{2}{\sqrt{\sigma_0^{\alpha_0}}}\,, \,2\sqrt{\frac{2}{\rho_0\sigma_0^{\alpha_0}}},\frac{1}{\sigma_0^{1+\frac{\alpha_0}{2}}}\right).
\end{equation*}
Then
\begin{equation*}
Q\left(R_1,\theta^{-\alpha_0}R_1^2\right) \subset Q\left(\frac{R_0}{C},\left(\sigma_0\theta\right)^{-\alpha_0}\left(\frac{R_0}{C}\right)^2\right) \subset Q\left(\frac{R_0}{2},\frac{\rho_0}{2}\theta^{-\alpha_0}\left(\frac{R_0}{2}\right)^2\right).
\end{equation*}
Thus we have
\begin{equation*}
\osc_{Q\left(R_1,\theta^{-\alpha_0}R_1^2\right)}u^i\leq \sigma_0\omega_{_M}=4\sigma_0\theta, \qquad \forall 1\leq i\leq k.
\end{equation*}
and
\begin{equation*}
\left|\bold{u}\right|\leq \sqrt{k}\,\sigma_0\omega_{_M}=4\sqrt{k}\,\sigma_0\theta \qquad \mbox{on $Q\left(R_1,\theta^{-\alpha_0}R_1^2\right)$}.
\end{equation*}
Applying the \textbf{Oscillation Lemma} again with $\theta$, $\sqrt{k}\theta$ being replaced by $\sigma_0\theta$, $\sqrt{k}\sigma_0\theta$, respectively, we can get
\begin{equation}\label{eq-inequality-for-Holder-continuity-first-step}
\osc_{Q\left(\frac{R}{\overline{C}^2},\theta^{-\alpha_0}\left(\frac{R}{\overline{C}^2}\right)^2\right)}u=\osc_{Q\left(\frac{R_1}{\sigma_0^{\frac{\alpha_0}{2}}C},\left(\sigma_0^{2}\theta\right)^{-\alpha_0}\left(\frac{R_1}{C}\right)^2\right)}u\leq \osc_{Q\left(\frac{R_1}{2},\frac{\rho_0}{2}\left(\sigma_0\theta\right)^{-\alpha_{0}}\left(\frac{R_1}{2}\right)^{2}\right)}u=\sigma_0\omega_1=\sigma_0^2\omega_0.
\end{equation}
Continuing this process, we can have 
\begin{equation*}
\osc_{Q\left(\frac{R}{\overline{C}^j},\theta^{-\alpha_0}\left(\frac{R}{\overline{C}^j}\right)^2\right)}u^i\leq\sigma_0^j\omega_0 \qquad \forall j\in\N,\,\,1\leq i\leq k.
\end{equation*}
By an argument similar to the proof of Theorem 3.12 of \cite{KL3}, 
\begin{equation*}
\osc_{Q\left(r,\theta^{-\alpha_0}r^2\right)}u^i\leq K\omega_{_M}\left(\frac{r}{R}\right)^{\beta} \qquad \left(0<r\leq R\right),\,\,1\leq i\leq k
\end{equation*}
holds for $0<\beta=-\log_{\overline{C}}\sigma_0<1$, $K=\frac{1}{\sigma_0}$. Therefore local H\"older continuity of solution $\bold{u}$ follows.
\begin{thm}\label{eq-local-continuity-of-solution}
Let $\bold{u}=\left(u^1,\cdots,u^k\right)$ be solution of \eqref{eq-main-equation-of-system} with initial data $u^i_0\in L^1\left(\R^n\right)\cap L^{1+m}\left(\R^n\right)$. Then the component $u^i$, $\left(1\leq i\leq k\right)$, is locally H\"older continuous in $\R^n\times\left(0,\infty\right)$. Especially, all components have the same modulus of continuity.
\end{thm}

\section{Evolution of population densities of $k$-species: Isolation, Synchronization and Stabilization}
\setcounter{equation}{0}
\setcounter{thm}{0}

\subsection{Isolated Species: Before Mixing of Isolated Supports}
In this subsection, we will investigate that there exists a waiting time until any two species interact with each other under the assumption that all species are located separately from each other initially.
\begin{proof}[\textbf{Proof of Theorem \ref{thm-existence-of-waiting-time-if-initially-separated}}]
Let $1\leq i\leq k$. Since 
\begin{equation*}
\overline{\textbf{supp}\,u^i_0}\subset \bigcup_{x\in\,\overline{\textbf{supp}\,u^i_0}}B_{\frac{d}{8}}(x)
\end{equation*} 
and $\textbf{supp}\,u^i_0$ is compact, there exist $x^i_1$, $\cdots$, $x^i_{l_i}$ such that
\begin{equation*}
\overline{\textbf{supp}\,u^i_0}\subset \bigcup_{j=1}^{l_i}B_{\frac{d}{8}}\left(x_{j}^i\right).
\end{equation*}
By Lemma \ref{eq-lem-boundedness-of-support-of-solution}, the functions $u^i$, $\cdots$, $u^k$ have at most finite speeds of propagations, i.e., there are times $T_1$, $\cdots$, $T_k>0$ such that
\begin{equation*}
\overline{\textbf{supp}\,u^i\left(\cdot,t\right)}\subset \bigcup_{j=1}^{l_i}B_{\frac{d}{4}}\left(x_{j}^i\right).\qquad \forall 1\leq i\leq k,\,\,0\leq t\leq T_i.
\end{equation*}
By \eqref{eq-infimum-distance-between-supports-of-u-i-s},
\begin{equation*}
\textbf{dist}\,\left(\bigcup_{j=1}^{l_{i_1}}B_{\frac{d}{4}}\left(x_{j}^{i_1}\right),\,\bigcup_{j=1}^{l_{i_2}}B_{\frac{d}{4}}\left(x_{j}^{i_2}\right)\right)\geq \frac{d}{2} \qquad \forall 1\leq i_1<i_2\leq k.
\end{equation*}
Thus, \eqref{eq-waiting-time-of-mixing-of-isolated-supports} holds for $T=\min\left\{T_1,\cdots,T_k\right\}$ and the theorem follows.
\end{proof}

\subsection{Synchronization: Spontaneous Mixing of Supports.}\label{subsection-Asymptotic-behaviour-after-mixing-of-supports}
This subsection is devoted to explain how fast two different species diffuse in each other's territory when they begin to mix.\\
\indent  Let $\bold{u}=\left(u^1,\cdots,u^k\right)$ be continuous solutions of \eqref{eq-main-equation-of-system} and suppose that $u^i(x_0,t_0)>0$ for some points  $\left(x_0,t_0\right)\in\R^n\times(0,\infty)$ and for some integer $1\leq i\leq k$. Then the diffusion coefficients of \eqref{eq-main-equation-of-system} is uniformly parabolic near the point $\left(x_0,t_0\right)$. By standard theory for the non-degenerate parabolic equation \cite{LSU}, the region near the point $\left(x_0,t_0\right)$ where the function $u^i$, $\left(1\leq i\leq k\right)$, is strictly positive can be immediately extended to the whole space where $\left|\bold{u}\right|$ is strictly positive. With this observation, we now are going to give a proof of the Theorem \ref{eq-lem-equailty-of-support-ofu-i-and-u-j-when-initially-joined}.

\begin{proof}[\textbf{Proof of Theorem \ref{eq-lem-equailty-of-support-ofu-i-and-u-j-when-initially-joined}}]
The proof is almost the same as that of Lemma 2.7 of \cite{KL1}. Thus, we will just use a modification of their proof here. If \eqref{eq-statements-of-equality-of-supprot-of-u-i-s-forall-i} is violated for some $t_1>t_0$, then there exist a point $x_1\in\R^n$ such that
\begin{equation*}
\left|\bold{u}\right|\left(x_1,t_1\right)>0 \qquad \mbox{and} \qquad x_1\in\partial\,\textbf{supp}\,u^{\,j}(t_1)\cup \partial\,\textbf{supp}\,u^{\,l}(t_1).
\end{equation*}
Without loss of generality, we may assume that
\begin{equation}\label{eqw-condition-of-suppose-not-strictly-posistive-on-bold-u=andon=a-boundary-of-u-i-=2}
\left|\bold{u}\right|\left(x_1,t_1\right)>0 \qquad \mbox{and} \qquad x_1\in\partial\,\textbf{supp}\,u^{\,j}(t_1).
\end{equation}
Since $u^i$ are continuous for all $1\leq i\leq k$, there exists a sufficiently small number $0<\epsilon<\frac{1}{2}\left(t_1-t_0\right)$ such that
\begin{equation*}
B_{\epsilon}(x_1)\subset\textbf{supp}\,\left|\bold{u}\right|(s) \qquad \forall t_1-\epsilon\leq s\leq t_1
\end{equation*}
and
\begin{equation}\label{eq-not-identically-zero-of-u-i-1-on-some-region}
u^{\,j}\left(x,t\right)\not\equiv 0 \qquad \forall(x,t)\in B_{\epsilon}(x_1)\times\left[t_1-\epsilon,t_1\right]
\end{equation}
\indent Let $v_{0}(x)=u^{\,j}\left(x,t_1-\frac{1}{2}\epsilon\right)\chi_{B_{\epsilon}\left(x_1\right)}$. Since $\left|\bold{u}\right|$ is strictly positive on $B_{\epsilon}(x_1)\times\left[t_1-\epsilon,t_1\right]$, by standard theory for the non-degenerate parabolic equation \cite{LSU} there exists an unique solution $v$ of 
\begin{equation*}
\begin{cases}
\begin{array}{rlcl}
v_t(x,t)&=\nabla\cdot\left(\left|\bold{u}\right|^{m-1}\left(x,t+t_1-\epsilon\right)\nabla v\left(x,t\right)\right)&\quad& \mbox{in $B_{\epsilon}(x_1)\times\left(0,\frac{\epsilon}{2}\right]$}\\
v(x,t)&=0 &\quad&\mbox{on $\partial B_{\epsilon}(x_1)\times\left(0,\frac{\epsilon}{2}\right]$}\\
v(x,0)&=v_0(x) &\quad& \mbox{in $B_{\epsilon}(x_1)$}.
\end{array}
\end{cases}
\end{equation*}
Moreover, by \eqref{eq-not-identically-zero-of-u-i-1-on-some-region} there exists a constant $c_1>0$ such that
\begin{equation}\label{eq-strictly-positive-of-v-on-B-half-epsilon-at-x-1-half-epsilon}
v_0(x)\not\equiv 0 \quad \mbox{in $B_{\epsilon}(x_1)$}\qquad\Rightarrow \qquad v\left(x,\frac{1}{2}\epsilon\right)\geq c_1>0 \qquad \forall x\in B_{\frac{\epsilon}{2}}(x_1).
\end{equation}
Since $u^{\,j}\left(x,t+t_1-\frac{1}{2}\epsilon\right)$ and $v(x,t)$ satisfies the same equation in $B_{\epsilon}(x_1)\times\left(0,\frac{\epsilon}{2}\right]$ and $u^{\,j}\left(x,t+t_1-\frac{1}{2}\epsilon\right)\geq v(x,t)$ on the parabolic boundary of $B_{\epsilon}(x_1)\times\left(0,\frac{\epsilon}{2}\right]$, by an argument similar to the proof of Lemma \ref{lem-uniquness-of-weak-solution-2} we have
\begin{equation}\label{eq-comparision-between-u-i-1-at-t-t-1-half-epsilon-and-v}
u^{\,j}\left(x,t+t_1-\frac{1}{2}\epsilon\right)\geq v(x,t) \qquad \forall (x,t)\in B_{\epsilon}(x_1)\times\left(0,\frac{\epsilon}{2}\right].
\end{equation}
By \eqref{eq-strictly-positive-of-v-on-B-half-epsilon-at-x-1-half-epsilon} and \eqref{eq-comparision-between-u-i-1-at-t-t-1-half-epsilon-and-v},
\begin{equation*}
u^{\,j}\left(x_1,t_1\right)\geq v\left(x_1,\frac{\epsilon}{2}\right)\geq c_1>0
\end{equation*}
, which contradicts \eqref{eqw-condition-of-suppose-not-strictly-posistive-on-bold-u=andon=a-boundary-of-u-i-=2}. Thus \eqref{eq-statements-of-equality-of-supprot-of-u-i-s-forall-i} holds and the lemma follows.
\end{proof}

\subsection{Stabilization: Improvements of Similarities Between Species.}

In this section, we will consider the asymptotic large time behaviour of functions $u^i$, $\left(1\leq i\leq k\right)$. More precisely, this section is devoted to study the uniform convergence between $u^i$, $\left(1\leq i\leq k\right)$, and Barenblatt solution of the porous medium equation.

\subsubsection{Uniqueness}
\begin{lemma}\label{lem-first-step-for-uniqueness-u-i-expressed-by-v-and-its-volume}
Let $m>1$ and let $M_1$, $\cdots$, $M_k$ be positive real numbers. Let $\delta$ is Dirac's delta function. For each $1\leq i\leq k$, let $u^i$ be a solution of \eqref{eq-main-equation-of-system} with initial data $u_0^i=M_i\delta$. Then
\begin{equation*}
M_ju^i(x,t)=M_iu^j(x,t) \qquad \forall (x,t)\in\R^n\times(0,\infty),\,\,1\leq i,\,j\leq k,
\end{equation*}
i.e., 
\begin{equation}\label{eq-expression-of-solution-u-to-i-with-barenblatt-initial-trace}
u^i=M_iv =M_i\left(\frac{\sum_{i=1}^ku^i}{\sum_{i=1}^kM_i}\right).
\end{equation}
\end{lemma}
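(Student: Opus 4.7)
My strategy is to identify the system solution with an explicit one-parameter family $(M_1 v,\ldots, M_k v)$, where $v$ is a scalar Barenblatt profile. This exploits the algebraic fact that the ansatz $u^i = M_i v$ collapses the system into a single rescaled porous medium equation for $v$.

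\textbf{Step 1: The ansatz.} Substituting $u^i = M_i v$ into \eqref{eq-main-equation-of-system} gives $|\bold{u}| = |\bold{M}| v$, and the $i$-th equation reduces to
\[
M_i v_t = \nabla\cdot\bigl(m\,|\bold{M}|^{m-1} v^{m-1} \nabla(M_i v)\bigr) = M_i\,|\bold{M}|^{m-1}\Delta v^m,
\]
independent of $i$. Hence the ansatz is consistent provided $v$ solves $v_t = |\bold{M}|^{m-1}\Delta v^m$ with initial datum $\delta$, which by classical scalar theory \cite{Va1} admits a unique Barenblatt solution.

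\textbf{Step 2: Linearization and uniqueness.} Given any solution $\bold{u}$ of \eqref{eq-main-equation-of-system} with the prescribed initial data, set $v^i := u^i/M_i$. Dividing the $i$-th equation of \eqref{eq-main-equation-of-system} by $M_i$ yields
\[
(v^i)_t = \nabla\cdot\bigl(\mathcal{A}(x,t)\,\nabla v^i\bigr), \qquad \mathcal{A}(x,t) := m\,|\bold{u}(x,t)|^{m-1},
\]
which is the \emph{same} linear parabolic equation for every $i$, and each $v^i$ carries the \emph{same} initial datum $\delta$. Treating $\mathcal{A}$ as a frozen coefficient, Lemma \ref{lem-uniquness-of-weak-solution-2} forces all $v^i$ to coincide, which is precisely the claim $M_j u^i = M_i u^j$.

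\textbf{Step 3: Handling the Dirac initial data.} The subtlety is that Lemma \ref{lem-uniquness-of-weak-solution-2} requires $L^2$ data, whereas $\delta \notin L^2$. I would circumvent this by regularization: pick a nonnegative standard mollifier $\eta_\epsilon \in L^1\cap L^{m+1}$ converging weakly to $\delta$, and let $\bold{u}_\epsilon$ be the solution with initial datum $(M_1\eta_\epsilon,\ldots,M_k\eta_\epsilon)$ provided by Theorem \ref{thm-existence-of-weak-solution-u-i-of-system}. The $L^\infty$ bound of Lemma \ref{eq-lemma-L-infty-bound-of-bold-u-absolute-value} and the conservation law of Lemma \ref{lem-conservation-law-of-L-1-mass} place $v^i_\epsilon := u^i_\epsilon/M_i$ in $L^1 \cap L^\infty \subset L^2$, and the difference $v^i_\epsilon - v^j_\epsilon$ satisfies the linear equation of Step 2 with zero $L^2$ initial datum. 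Applying Lemma \ref{lem-uniquness-of-weak-solution-2} then gives $v^i_\epsilon \equiv v^j_\epsilon$, hence $M_j u^i_\epsilon = M_i u^j_\epsilon$. Passing $\epsilon\to 0$ along the compactness arguments of Theorem \ref{thm-existence-of-weak-solution-u-i-of-system} preserves this pointwise linear identity.

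The main obstacle is precisely this limiting step: one must ensure that the approximate solutions $\bold{u}_\epsilon$ converge to $\bold{u}$ in a mode strong enough to preserve pointwise linear relations among the components (and uniquely, so that the limit really is the given solution). This stability is not a purely formal fact, but follows from the uniform estimates assembled in Section 2, together with the locally $L^2\cap L^{m+1}$ compactness and the identification of limits used at the end of the proof of Theorem \ref{thm-existence-of-weak-solution-u-i-of-system}.
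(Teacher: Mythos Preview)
Your proof is correct and follows essentially the same approach as the paper: the paper sets $w = M_j u^i - M_i u^j$, observes that $w$ solves the linear equation $w_t = \nabla\cdot\bigl(m|\bold{u}|^{m-1}\nabla w\bigr)$ with zero initial data, and invokes the argument of Lemma~\ref{lem-uniquness-of-weak-solution-2}, which is exactly your Step~2 up to a harmless rescaling. Your Step~3 (regularization to handle $\delta\notin L^2$) is more careful than the paper, which simply asserts $w(x,0)=0$ and applies the uniqueness argument directly; your Step~1 anticipates Theorem~\ref{thm-convergence-of-v-defined-above-to-some-mass-times-barrenblatt-function} but is not needed for the present lemma.
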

\begin{proof}
For any $1\leq i$, $j\leq k$, set $w=M_ju^i-M_iu^j$. Then
\begin{equation*}
w(x,0)=0 \qquad \mbox{in $\R^n$}
\end{equation*}
and
\begin{equation*}
\begin{aligned}
w_t=M_j\left(u^i\right)_t-M_i\left(u^j\right)_t=\nabla\cdot\left(m\left|\bold{u}\right|^{m-1}\nabla w\right) \qquad \mbox{in $\R^n\times(0,\infty)$}.
\end{aligned}
\end{equation*}
Then by an argument similar to the proof of Lemma \ref{lem-uniquness-of-weak-solution-2}, we have
\begin{equation}\label{eq-equivalent-between-bro-u-to-i-s}
\begin{aligned}
w(x,t)\equiv 0 \qquad \Rightarrow \qquad M_ju^i(x,t)=M_iu^{j}(x,t)\qquad \forall (x,t)\in\R^n\times\left[0,\infty\right)
\end{aligned}
\end{equation}
Moreover by \eqref{eq-equivalent-between-bro-u-to-i-s},
\begin{equation*}
\left(\sum_{j=1}^kM_j\right)u^i(x,t)=\sum_{j=1}^kM_ju^i(x,t)=\sum_{j=1}^kM_iu^j(x,t)=M_i\left(\sum_{j=1}^ku^j(x,t)\right) \qquad \forall (x,t)\in\R^n\times\left[0,\infty\right).
\end{equation*}
Thus \eqref{eq-expression-of-solution-u-to-i-with-barenblatt-initial-trace} holds and the lemma follows.
\end{proof}

Let $v$ be the function given in Lemma \ref{lem-first-step-for-uniqueness-u-i-expressed-by-v-and-its-volume}. By direct computation, $v$ satisfies
\begin{equation*}
v_t=\left|\bold{M}\right|^{m-1}\nabla\cdot\left(m\left|v\right|^{m-1}\nabla v\right) \qquad \mbox{in $\R^n\times(0,\infty)$}
\end{equation*}
where $\bold{M}=\left(M_1,\cdots, M_k\right)$ and
\begin{equation*}
v(x,0)=\delta(x) 
\end{equation*}
where $\delta$ is Dirac's delta function. Then by M. Pierre's uniqueness theorem \cite{Pi}, we can have the following result.
\begin{thm} [cf.Theorem 13.6 of \cite{Va1}]\label{thm-convergence-of-v-defined-above-to-some-mass-times-barrenblatt-function}
Let $m>1$ and let $M_1$, $\cdots$, $M_k$ be positive real numbers. Let $\delta$ is Dirac's delta function. Then, for each $1\leq i\leq k$ there exists an unique solution $u^i$ of \eqref{eq-main-equation-of-system} with initial data $u_0^i=M_i\delta$. Here the solution $u^i$ is given by
\begin{equation*}
u^i(x,t)=\frac{M_i}{\left|\bold{M}\right|}\mathcal{B}_{\left|\bold{M}\right|}(x,t) \qquad \forall (x,t)\in \R^n\times\left[0,\infty\right)
\end{equation*}
where $\bold{M}=\left(M_1,\cdots,M_k\right)$ and the function $\mathcal{B}_{M}$ is the self-similar Barenblatt solution of the porous medium equation with $L^1$ mass $M$.
\end{thm}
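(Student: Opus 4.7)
The plan is to reduce the system problem to a single scalar porous medium equation via Lemma \ref{lem-first-step-for-uniqueness-u-i-expressed-by-v-and-its-volume}, apply the classical uniqueness theorem of M. Pierre for measure-valued initial data, and then identify the resulting scalar solution with a Barenblatt profile by a simple scaling argument.

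First I would use Lemma \ref{lem-first-step-for-uniqueness-u-i-expressed-by-v-and-its-volume} to write $u^i = M_i v$ for every $i$, where $v = (\sum_j u^j)/(\sum_j M_j)$. A direct computation then shows $|\bold{u}|^2 = \sum_i (u^i)^2 = (\sum_i M_i^2) v^2 = |\bold{M}|^2 v^2$, so that $|\bold{u}|^{m-1} = |\bold{M}|^{m-1} v^{m-1}$. Summing the equations \eqref{eq-main-equation-of-system} over $i$ and dividing by $\sum_j M_j$, I obtain
\begin{equation*}
v_t = |\bold{M}|^{m-1}\,\nabla\cdot\left(m v^{m-1}\nabla v\right) = |\bold{M}|^{m-1}\Delta v^{m} \qquad \mbox{in $\R^n\times(0,\infty)$}
\end{equation*}
with initial trace $v(\cdot,0)=\delta$, since $u_0^i = M_i\delta$ gives total initial mass $\sum_j M_j$ normalized back to $1$. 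Thus $v$ is a fundamental solution of a scalar PME whose diffusion has been rescaled by the constant $|\bold{M}|^{m-1}$.

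For uniqueness, I would invoke M. Pierre's theorem \cite{Pi} (in the form stated in Theorem 13.6 of \cite{Va1}): the fundamental solution of $v_t = c\,\Delta v^m$ with a finite Radon measure as initial datum is unique in the appropriate weak class. Applied with $c = |\bold{M}|^{m-1}$ and the Dirac mass $\delta$, this yields a unique $v$; the change of variables $\tau = |\bold{M}|^{m-1} t$ converts the equation to the standard PME $v_\tau = \Delta v^m$ with unit-mass Dirac datum, whose fundamental solution is the Barenblatt profile $\mathcal{B}_1$. Unwinding the rescaling and checking that $\mathcal{B}_{|\bold{M}|}/|\bold{M}|$ has the required mass $1$, the $|\bold{M}|^{m-1}$ factor and the self-similar structure \eqref{eq-barenblatt-solution-of-PME} combine to give $v(x,t)=\mathcal{B}_{|\bold{M}|}(x,t)/|\bold{M}|$; hence $u^i = M_i v = \frac{M_i}{|\bold{M}|}\mathcal{B}_{|\bold{M}|}$.

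Existence of the $u^i$ would be handled by approximation: replace $M_i\delta$ by a sequence $u^{i}_{0,\eta}\in L^1\cap L^{m+1}$, compactly supported, nonnegative, with mass $M_i$ and concentrating to $M_i\delta$ as $\eta\to 0$, and with ratios $u^{i}_{0,\eta}/|\bold{u}_{0,\eta}|$ tending to $M_i/|\bold{M}|$. By Theorem \ref{thm-existence-of-weak-solution-u-i-of-system} there exist solutions $\bold{u}_\eta$, and the uniform $L^\infty$ bound in Lemma \ref{eq-lemma-L-infty-bound-of-bold-u-absolute-value}, the $L^1$ mass conservation (Lemma \ref{lem-conservation-law-of-L-1-mass}) and the energy estimate \eqref{eq-energy-type-inequality-of-bold-w-equal-standard-norm-of-bold-u} give enough compactness to extract a limit solving the system with the singular initial trace. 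The main obstacle I would expect is justifying this limit at $t=0^{+}$ in the sense needed for Pierre's uniqueness class, i.e., verifying that the limit really attains $M_i\delta$ as initial trace rather than some other Radon measure; once that is checked, the uniqueness step identifies the limit unconditionally with the explicit formula, and the existence-uniqueness statement follows.
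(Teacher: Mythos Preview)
Your proposal is correct and follows essentially the same route as the paper: reduce to the scalar function $v$ via Lemma \ref{lem-first-step-for-uniqueness-u-i-expressed-by-v-and-its-volume}, observe that $v$ solves $v_t=|\bold{M}|^{m-1}\Delta v^m$ with Dirac initial datum, and invoke Pierre's uniqueness theorem. The paper's treatment is in fact terser---it is contained in the short paragraph preceding the theorem and does not spell out the time-rescaling identification with $\mathcal{B}_{|\bold{M}|}$ or the existence-by-approximation argument---so your version simply fills in details the paper leaves implicit.
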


\subsubsection{Scaling and Uniform estimates}

For each $1\leq i\leq k$, let $u^i$ be the function in \eqref{eq-main-equation-of-system} with $L^1$ mass $M_i$. For $\lambda>0$, construct the families of functions
\begin{equation}\label{eq-def-of-rescaled-function-u-lambda-i}
u_{\lambda}^i(x,t)=\lambda^{a_1}u^i\left(\lambda^{a_2}x,\lambda t\right) \qquad \forall 1\leq i\leq k 
\end{equation}
and
\begin{equation}\label{eq-def-of-rescaled-function-bold-w-lambda-i}
w_{\lambda}(x,t)=\left|\bold{u}_{\lambda}\right|^2(x,t) \qquad \qquad \left(\,\bold{u}_{\lambda}=(u_{\lambda}^1,\cdots,u_{\lambda}^k)\,\right)
\end{equation}
where $a_1$ and $a_2$ are given by \eqref{eq-constant-alpha-1-beta-1-k}. Then $u_{\lambda}^i$ satisfies the equation
\begin{equation}\label{problem-of-after-scaling-computation-for-asymptotic}
\begin{cases}
\begin{aligned}
\left(u^i_{\lambda}\right)_t=\nabla\cdot\left(m\,w_{\lambda}^{\frac{m-1}{2}}\nabla u^i_{\lambda}\right) \qquad \mbox{in $\R^n\times(0,\infty)$}\\
u^i_{\lambda}(x,0)=u_0^i\left(\lambda^{a_2}x\right):=u^i_{0,\lambda}(x) \qquad \forall x\in\R^n.
\end{aligned}
\end{cases}
\end{equation}
By Lemma \ref{lem-conservation-law-of-L-1-mass},
\begin{equation}\label{eq-mass-conservation-of-rescaled-solution-u-i-lambda}
\int_{\R^n}u_{\lambda}^i(x,t)\,dx=\int_{\R^n}\lambda^{a_1}u^i\left(\lambda^{a_2}x,\lambda t\right)\,dx=\int_{\R^n}u^i\left(y,\lambda t\right)\,dy=M_i<\infty \qquad \forall \lambda>0, \,\,t\geq 0,\,\,1\leq i\leq k.
\end{equation}
Hence the families $\left\{u^i_{\lambda}\right\}_{\lambda\geq 1}$, $\left(i=1,\cdots,k\right)$, is uniformly bounded in $L^1\left(\R^n\right)$ for all $t>0$. By Lemma \ref{eq-lemma-L-infty-bound-of-bold-u-absolute-value},
\begin{equation*}
\left\|u_{\lambda}^i\left(\cdot,1\right)\right\|_{L^{\infty}}\leq \left\|\,\left|\bold{u}_{\lambda}\right|\left(\cdot,1\right)\right\|_{L^{\infty}}=\lambda^{a_1}\left\|\,\left|\bold{u}\right|\left(\cdot,\lambda\right)\right\|_{L^{\infty}}\leq \lambda^{a_1}\frac{C\left\|\,\left|\bold{u}_0\right|\right\|^{\frac{2a_1}{n}}_{L^1}}{\lambda^{a_1}}\leq CM^{\frac{2a_1}{n}} \qquad \forall 1\leq i\leq k
\end{equation*}
where $M=M_1+\cdots+M_k$ is independent to $\lambda>0$. Similarly
\begin{equation}\label{eq-L-infty-bound-of-rescaled-solution-u-i-lambda-at-t-0}
\left\|u^i_{\lambda}\left(\cdot,t_0\right)\right\|_{L^{\infty}}\leq CM^{\frac{2a_1}{n}}t_0^{-a_1} \qquad \forall t_0>0,\,\,1\leq i\leq k.
\end{equation}
By \eqref{eq-mass-conservation-of-rescaled-solution-u-i-lambda} and \eqref{eq-L-infty-bound-of-rescaled-solution-u-i-lambda-at-t-0} and Interpolation theory,
\begin{equation*}
\mbox{$\left\|u_{\lambda}^i(\cdot,t_0)\right\|_{L^p}$ is equibounded for all $p\in[1,\infty]$}.
\end{equation*}
By an arguments similar to the proof of Theorem \ref{thm-existence-of-weak-solution-u-i-of-system}, we can get estimates for the diffusion coefficients $w_{\lambda}$, i.e., there exists an uniform constant $C>0$ such that
\begin{equation}\label{eq-uniform-estimates-of-bold-w-to-some-power-and-multiply-u-to-i}
\left\|w_{\lambda}^{\frac{m-1}{2}}u_{\lambda}^i\right\|_{L^2\left(0,\infty;H^1(\R^n)\right)},\quad \left\|w_{\lambda}^{\frac{m}{2}}\right\|_{L^2\left(0,\infty;H^1(\R^n)\right)}\leq C \qquad \forall 1\leq i\leq k.
\end{equation}
\subsubsection{Limit function of solution $u^i$, $1\leq i\leq k$.}

We will use a modification of techniques of the proofs in the Section 4 of \cite{KL1} and proofs in the Section 18 of \cite{Va1} to prove the asymptotic behaviour of solution of degenerated parabolic system.

\begin{proof}[\textbf{Proof of Theorem \ref{thm-asymptotic-behaviour-of-system-PME-L-1=and=L=infty}}]
We will use a modification of the technique of proof of Theorem 1.2 of \cite{KL1} to prove theorem. For any $\lambda>0$, let $u_{\lambda}^i$ and $w_{\lambda}$ be given by \eqref{eq-def-of-rescaled-function-u-lambda-i} and \eqref{eq-def-of-rescaled-function-bold-w-lambda-i}, respectively. By \eqref{eq-L-infty-bound-of-rescaled-solution-u-i-lambda-at-t-0}, the family $\left\{u_{\lambda}^i\right\}_{\lambda>0}$ is uniformly bounded in $\R^n\times\left[t_0,\infty\right)$ for any $t_0>0$. This implies that $\left\{u_{\lambda}^i\right\}_{\lambda>0}$ is relatively compact in $L^1_{loc}\left(\R^n\times(0,\infty)\right)$. Thus for any sequence $\lambda_n\to\infty$ as $n\to\infty$, the sequence $\left\{u_{\lambda_n}^i\right\}_{n\in\N}$ has a subsequence which we may assume without loss of generality to be the sequence itself that converges in $L^1_{loc}\left(\R^n\times(0,\infty)\right)$ to some function $u^i_{\infty}$ in $\R^n\times(0,\infty)$ as $n\to\infty$. By the definition of $w_{\lambda}$, we also have
\begin{equation*}
w_{\lambda_n}\to w_{\infty}=\left|\bold{u}_{\infty}\right|^2=\sum_{i=1}^{k}\left(u_{\infty}^i\right)^2 \qquad \mbox{as $n\to\infty$}. 
\end{equation*}
\indent Let $\vp\in C_0^{\infty}\left(\R^n\times(0,\infty)\right)$ be a test function. Multiplying the first equation in \eqref{problem-of-after-scaling-computation-for-asymptotic} by $\vp$ and integrating over $\R^n\times(0,\infty)$, we have
\begin{equation}\label{eq-weak-formula-before-letting-lambda-to-infty-in-aysmptotic}
\begin{aligned}
&m\int_{0}^{\infty}\int_{\R^n}w_{\lambda}^{\frac{m-1}{2}}\nabla u_{\lambda}^i\cdot\nabla\vp\,dxdt-\int_{0}^{\infty}\int_{\R^n}u_{\lambda}^i\,\vp_t\,dxdt\\
&\qquad \qquad =m\int_{0}^{\infty}\int_{\R^n}\nabla\left(w_{\lambda}^{\frac{m-1}{2}}\,u_{\lambda}^i\right)\cdot\nabla\vp\,dxdt-\left(m-1\right)\int_{0}^{\infty}\int_{\R^n}\left(u_{\lambda}^i\,w_{\lambda}^{-\frac{1}{2}}\right)\,\left(\nabla w_{\lambda}^{\frac{m}{2}}\cdot\nabla\vp\right)\,dxdt\\
&\qquad\qquad \qquad \qquad-\int_{0}^{\infty}\int_{\R^n}u_{\lambda}^i\,\vp_t\,dxdt=0 \qquad \qquad \forall 1\leq i\leq k.
\end{aligned}
\end{equation}
By the convergence of $u_{\lambda}^i$ and \eqref{eq-uniform-estimates-of-bold-w-to-some-power-and-multiply-u-to-i}, 
\begin{equation}\label{eq-convergence-of-nabla-bold-w-lambda-alpha-1-and-nabla-w-lambda-alpha-u}
\begin{cases}
\begin{aligned}
u_{\lambda}^i&\to u_{\infty}^i \qquad \qquad \mbox{in $L^1\left(\R^n\times(0,\infty)\right)$} \quad \forall 1\leq i\leq k\\
\nabla w_{\lambda}^{\frac{m}{2}}&\to \nabla w_{\infty}^{\frac{m}{2}} \qquad \qquad \mbox{in $L_{loc}^2\left(\R^n\times(0,\infty)\right)$}\\
\nabla\left(w_{\lambda}^{\frac{m-1}{2}}u^i_{\lambda}\right)&\to \nabla\left(w_{\infty}^{\frac{m-1}{2}}u_{\infty}^i\right) \qquad \mbox{in $L_{loc}^2\left(\R^n\times(0,\infty)\right)$} \quad \forall 1\leq i\leq k
\end{aligned}
\end{cases}
\end{equation}
as $\lambda\to\infty$. Moreover, by an argument similar to the proof of \eqref{eq-equality-between-u-i-and-G-times-sqrt-bold-w-in-L-2-loc-sense} we also have
\begin{equation}\label{eq-convergence-of-nabla-bold-w-lambda-minus-power-and-nabla-u-i-lambda}
u^i_{\lambda}w^{-\frac{1}{2}}_{\lambda}\to G \qquad \mbox{in $L^2_{loc}$} \qquad \mbox{as $\lambda\to\infty$}\qquad \forall 1\leq i\leq k
\end{equation}
for some function $G$ in $L^2_{loc}$ satisfying $u^i_{\infty}=w^{\frac{1}{2}}_{\infty}G$. Letting $\lambda\to\infty$ in \eqref{eq-weak-formula-before-letting-lambda-to-infty-in-aysmptotic}, by \eqref{eq-convergence-of-nabla-bold-w-lambda-alpha-1-and-nabla-w-lambda-alpha-u} and \eqref{eq-convergence-of-nabla-bold-w-lambda-minus-power-and-nabla-u-i-lambda} we have
\begin{equation}\label{eq-weak-formula-of-u-i-infty-10}
m\int_{0}^{\infty}\int_{\R^n}w_{\infty}^{\frac{m-1}{2}}\nabla u_{\infty}^i\cdot\nabla\vp\,dxdt-\int_{0}^{\infty}\int_{\R^n}u_{\infty}^i\,\vp_t\,dxdt \qquad \forall 1\leq i\leq k.
\end{equation}
By an argument similar to the proofs of Lemma 18.4 and Lemma 18.6 of \cite{Va1}, we can get
\begin{equation}\label{eq-intial-condition-of-u-i-infty-to-volume-times-dirac-delta}
u^i_{0,\lambda}(x)\to M_i\,\delta(x) \quad \mbox{as $\lambda\to \infty$} \qquad \mbox{and} \qquad u^i_{\infty}(x,t)\to M_i\,\delta_0(x) \quad \mbox{as $t\to 0$} \qquad \forall 1\leq i\leq k.
\end{equation}
By \eqref{eq-weak-formula-of-u-i-infty-10} and \eqref{eq-intial-condition-of-u-i-infty-to-volume-times-dirac-delta}, $u^i_{\infty}$, $(i=1,\cdots,k)$, is a weak solution of \eqref{eq-main-equation-of-system}  with initial data $M_i\delta$. Then by \eqref{eq-mass-conservation-of-rescaled-solution-u-i-lambda} and Theorem \ref{thm-convergence-of-v-defined-above-to-some-mass-times-barrenblatt-function},
\begin{equation}\label{eq-equiv-u-i-infty-to-L-1-mass-and-barrenblatt-solutio34}
u_{\infty}^i(x,t)=\frac{M_i}{\left|\bold{M}\right|}\mathcal{B}_{\left|\bold{M}\right|}(x,t) \qquad \forall (x,t)\in \R^n\times\left[0,\infty\right).
\end{equation}
By \eqref{eq-equiv-u-i-infty-to-L-1-mass-and-barrenblatt-solutio34} and an argument similar to the proof of Theorem 1.5 of \cite{KL3} we have \eqref{eq-L-1-convergence-between-u-i-and-M-i-barrenblatt-profiles} and \eqref{eq-L-infty-convergence-between-u-i-and-M-i-barrenblatt-profiles} and the theorem follows.
\end{proof}

\subsection{Asymptotic behaviour II : The Entropy Approach.}
In this subsection, we are going to consider another approach called entropy method to show the asymptotic large behaviour of function $u^i$, $\left(1,\cdots,k\right)$. Consider the continuous rescaling
\begin{equation}\label{eq-continuous-rescaling-of=solution-u-i-and-bold-u}
\theta^{\,i}\left(\eta,\tau\right)=t^{\,a_1}u^i\left(x,t\right) \qquad \mbox{and} \qquad \mbox{\boldmath$\theta$}\left(\eta,\tau\right)=t^{\,a_1}\bold{u}\left(x,t\right) \qquad\left(\,\eta=xt^{-a_2},\,\,\,\tau=\log t\,\right)
\end{equation}
where $a_1$ and $a_2$ are given by \eqref{eq-constant-alpha-1-beta-1-k}. Then $\mbox{\boldmath$\theta$}=\left(\theta^{\,1},\cdots,\theta^{\,k}\right)$ satisfies
\begin{equation}\label{eq-equation-for-soution-boldmath-theta}
\left(\theta^{\,i}\right)_{\tau}=\nabla\cdot\left(m\Theta^{m-1}\nabla\theta^{\,i}\right)+a_2\eta\cdot\nabla\theta^{\,i}+a_1\theta^{\,i}=\nabla\cdot\left(m\Theta^{m-1}\nabla\theta^{\,i}\right)+a_2\nabla\left(\eta\,\theta^{\,i}\right) \qquad \forall 1\leq i\leq k.
\end{equation}
where $\Theta=\left|\mbox{\boldmath$\theta$}\right|$. By \eqref{eq-equation-for-bold-v-sub-equal-standard-PME}, $\Theta$ also satisfies
\begin{equation}\label{eq-equation-for-Theta-continuous-rescaled-function-of-absolute-bold-u}
\Theta_{\tau}=\La\Theta^{m}+a_2\nabla\cdot\left(\eta\,\Theta\right)+m\Theta^{m-2}\left(\left|\nabla\Theta\right|^2-\sum_{i=1}^{k}\left|\nabla\theta^i\right|^2\right).
\end{equation}
We define the functional $H_{\Theta}$ by
\begin{equation}\label{eq-definition-of-functional-for-Theta-absolute-of-bold-theta}
H_{\Theta}\left(\tau\right)=\int_{\R^n}\left(\frac{1}{m-1}\Theta^m+\frac{a_2}{2}\left|\eta\right|^{\,2}\Theta\right)\,d\eta,
\end{equation}
Then we can get the following variation of the entropy.
\begin{lemma}
Let $\Theta$ be a bounded smooth solution of \eqref{eq-equation-for-Theta-continuous-rescaled-function-of-absolute-bold-u} and $H_{\Theta}$ be the functional given by \eqref{eq-definition-of-functional-for-Theta-absolute-of-bold-theta}  satisfying
\begin{equation*}
\int_{\R^n}\left(1+|x|^2\right)\left|\bold{u}_0\right|(x)\,dx<\infty.
\end{equation*}
Then
\begin{equation*}
\begin{aligned}
\frac{dH_{\Theta}}{d\tau}&=-I_1-I_2
\end{aligned}
\end{equation*}
where
\begin{equation*}
I_1=\int_{\R^n}\Theta\left|\nabla\left(\frac{m}{m-1}\Theta^{m-1}+\frac{a_2}{2}\left|\eta\right|^2\right)\right|^2\,d\eta\geq 0
\end{equation*}
and
\begin{equation*}
I_2=m\int_{\R^n}\Theta^{m-2}\left(\frac{m}{m-1}\Theta^{m-1}+\frac{a_2}{2}\left|\eta\right|^2\right)\left(\sum_{i=1}^{k}\left|\nabla\theta^i\right|^2-\left|\nabla\Theta\right|^2\right)\,d\eta\geq 0.
\end{equation*}
\end{lemma}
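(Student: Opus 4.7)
The plan is to identify the equation for $\Theta$ as a perturbed gradient flow for $H_{\Theta}$ and then dissipate. Set
$$\Phi(\eta,\tau)=\frac{m}{m-1}\Theta^{m-1}+\frac{a_2}{2}|\eta|^2,$$
which is (up to an additive constant) the variational derivative of the integrand of $H_{\Theta}$ with respect to $\Theta$. Elementary algebra gives
$$\nabla\Theta^{m}=\Theta\,\nabla\!\left(\tfrac{m}{m-1}\Theta^{m-1}\right),\qquad a_2\eta\,\Theta=\Theta\,\nabla\!\left(\tfrac{a_2}{2}|\eta|^2\right),$$
so $\La\Theta^{m}+a_2\nabla\cdot(\eta\,\Theta)=\nabla\cdot(\Theta\,\nabla\Phi)$. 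Introducing the non-negative quantity $G=m\Theta^{m-2}\bigl(\sum_{i=1}^{k}|\nabla\theta^{\,i}|^{2}-|\nabla\Theta|^{2}\bigr)$, which is non-negative by the Cauchy-Schwarz inequality \eqref{eq-inequalities-from-Cauchy-Schwarz-inequality-1}, equation \eqref{eq-equation-for-Theta-continuous-rescaled-function-of-absolute-bold-u} rewrites as
$$\Theta_{\tau}=\nabla\cdot(\Theta\,\nabla\Phi)-G.$$

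Differentiating under the integral sign, using that the density of $H_{\Theta}$ has variational derivative exactly $\Phi$, one obtains
$$\frac{dH_{\Theta}}{d\tau}=\int_{\R^{n}}\Phi\,\Theta_{\tau}\,d\eta=\int_{\R^{n}}\Phi\,\nabla\cdot(\Theta\,\nabla\Phi)\,d\eta-\int_{\R^{n}}\Phi\,G\,d\eta.$$
Integration by parts turns the first integral into $-\int\Theta|\nabla\Phi|^{2}\,d\eta=-I_{1}$; the second integral equals $-I_{2}$ after substituting the definitions of $\Phi$ and $G$. Both terms have the correct sign: $I_{1}\geq 0$ trivially, while $I_{2}\geq 0$ because each of the three factors $\Phi$, $\Theta^{m-2}$, and $\sum_{i}|\nabla\theta^{\,i}|^{2}-|\nabla\Theta|^{2}$ is non-negative (the latter again by \eqref{eq-inequalities-from-Cauchy-Schwarz-inequality-1}).

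The main obstacle is to justify the differentiation under the integral and the vanishing of boundary terms. The moment assumption $\int(1+|x|^{2})|\bold{u}_{0}|\,dx<\infty$ yields $\int|\eta|^{2}\Theta(\cdot,\tau)\,d\eta<\infty$ at the initial time, and this is propagated in $\tau$ by a standard moment inequality obtained on multiplying \eqref{eq-equation-for-Theta-continuous-rescaled-function-of-absolute-bold-u} by a smooth truncation of $|\eta|^{2}$ and passing to the limit; combined with the $L^{\infty}$ bound of Lemma \ref{eq-lemma-L-infty-bound-of-bold-u-absolute-value}, this ensures $H_{\Theta}(\tau)<\infty$ and legitimates differentiation in $\tau$. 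The boundary flux $\int_{\partial B_{R}}\Phi\,\Theta\,\nabla\Phi\cdot\nu\,d\sigma$ vanishes in the limit $R\to\infty$ because Lemma \ref{eq-lem-boundedness-of-support-of-solution} forces $|\bold{u}|(\cdot,t)$ to have compact support, so that $\Theta(\cdot,\tau)$ has compact support in $\eta$ for every fixed $\tau$ and the boundary term is identically zero for $R$ large. The smoothness hypothesis circumvents the free-boundary subtleties; in the general non-smooth setting one would run the same calculation on the non-degenerate approximations \eqref{eq-for-existence-non-degenerate-pde-with-bold-w-epsilon-M-diffusion-coefficients} and pass to the limit using the energy estimates \eqref{eq-estimates-for-u-epsilon-M-similar-to-weak-solution-space} together with Fatou's lemma for the dissipation terms.
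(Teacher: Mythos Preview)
Your argument is correct and follows essentially the same route as the paper: both compute $\frac{dH_\Theta}{d\tau}=\int\Phi\,\Theta_\tau\,d\eta$ with $\Phi=\frac{m}{m-1}\Theta^{m-1}+\frac{a_2}{2}|\eta|^2$, substitute the equation for $\Theta$, integrate by parts to obtain $-I_1$, and read off $-I_2$ from the remainder term, with the non-negativity of $I_2$ coming from Cauchy--Schwarz. Your additional discussion of the moment propagation and vanishing boundary terms goes beyond what the paper spells out, but the core computation is identical.
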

\begin{proof}
By integration by parts, we have
\begin{align}
\frac{dH_{\Theta}}{d\tau}&=\int_{\R^n}\left(\frac{m}{m-1}\Theta^{m-1}+\frac{a_2}{2}\left|\eta\right|^2\right)\Theta_{\tau}\,d\eta\notag\\
&=\int_{\R^n}\left(\frac{m}{m-1}\Theta^{m-1}+\frac{a_2}{2}\left|\eta\right|^2\right)\left(\La\Theta^{m}+a_2\nabla\cdot\left(\eta\,\Theta\right)\right)\,d\eta\notag\\
&\qquad \qquad +m\int_{\R^n}\Theta^{m-2}\left(\frac{m}{m-1}\Theta^{m-1}+\frac{a_2}{2}\left|\eta\right|^2\right)\left(\left|\nabla\Theta\right|^2-\sum_{i=1}^{k}\left|\nabla\theta^i\right|^2\right)\,d\eta\notag\\
&=-\int_{\R^n}\Theta\left|\nabla\left(\frac{m}{m-1}\Theta^{m-1}+\frac{a_2}{2}\left|\eta\right|^2\right)\right|^2\,d\eta\notag\\
&\qquad \qquad -\left(m\int_{\R^n}\Theta^{m-2}\left(\frac{m}{m-1}\Theta^{m-1}+\frac{a_2}{2}\left|\eta\right|^2\right)\left(\sum_{i=1}^{k}\left|\nabla\theta^i\right|^2-\left|\nabla\Theta\right|^2\right)\,d\eta\right)\label{eq-direct-computation-of-derivation-H-sub-Theta-w-r-t-tau}\\
&=-I_1-I_2.\notag
\end{align}
By Cauchy-Schwarz inequality,
\begin{equation}\label{eq-Cauchy-Schwarz-inequality-on-Theta}
\left|\nabla\Theta\right|^2=\frac{\left|\sum_{i=1}^{k}\nabla\left(\theta^i\right)^2\right|^2}{4\sum_{i=1}^{k}\left(\theta^i\right)^2}\leq\frac{\left[\sum_{i=1}^{k}\left(\theta^i\left|\nabla\theta^i\right|\right)\right]^2}{\sum_{i=1}^{k}\left(\theta^i\right)^2} \leq \sum_{i=1}^{k}\left|\nabla\theta^i\right|^2.
\end{equation}
Thus by \eqref{eq-direct-computation-of-derivation-H-sub-Theta-w-r-t-tau} and \eqref{eq-Cauchy-Schwarz-inequality-on-Theta}, the positivity of $I_2$ holds and the lemma follows.
\end{proof}

Let $\left\{\tau_n\right\}_{n=1}^{\infty}\subset \R^+$ be a sequence such that $\tau_k\to\infty$ as $k\to\infty$ and define
\begin{equation*}
\theta_n^{\,i}\left(\eta,\tau\right)=\theta^{\,i}\left(\eta,\tau+\tau_n\right)\qquad \forall 1\leq i\leq k\qquad \mbox{and}\qquad \Theta_n\left(\eta,\tau\right)=\Theta\left(\eta,\tau+\tau_n\right).
\end{equation*}
By an argument similar to the previous section, $\left\{\theta^{\,i}_n\right\}_{n=1}^{\infty}$ and $\left\{\Theta_n\right\}_{n=1}^{\infty}$ are precompact in $L^{\infty}_{loc}\left(0,\infty,L^1\left(\R^n\right)\right)$. Then letting $n\to\infty$, we have
\begin{equation}\label{eq-convergnece-of-theat0-i-and-Theata-with-tilde}
\theta^{\,i}_n(\eta,\tau)\to \tilde{\theta^{\,i}}(\eta,\tau)\qquad \mbox{and} \qquad \Theta_n(\eta,\tau)\to \tilde{\Theta}(\eta,\tau).
\end{equation}
Moreover, it can be easily checked that $\tilde{\mbox{\boldmath$\theta$}}=\left(\tilde{\theta}^{\,1},\cdots,\tilde{\theta}^{\,k}\right)$ is a weak solution of \eqref{eq-equation-for-soution-boldmath-theta} with the same estimates as that of $\mbox{\boldmath$\theta$}_n=\left(\theta_n^{\,1},\cdots,\theta_n^{\,k}\right)$.\\
\indent Putting $\lambda=t$, $x=\eta$ and $t=1$ in \eqref{eq-def-of-rescaled-function-u-lambda-i}, we can have the following families of functions with respect to $t>0$
\begin{equation}\label{eq-asymptotic-sequence-with-new-variables-on-Entropy}
\lambda^{a_1}u^i\left(\lambda^{a_2}x,\lambda t\right)=t^{\,a_1}u^i\left(t^{a_2}\eta,t\right)=\theta^{\,i}\left(\eta,\tau\right)\qquad \forall 1\leq i\leq k.
\end{equation}
Then by an argument similar to the proof of Theorem \ref{thm-asymptotic-behaviour-of-system-PME-L-1=and=L=infty}, there exists a function $\tilde{u}^i$ such that
\begin{equation*}\label{eq-convergence-of-families-with-lamdb=t-x=eta-t=1-to-function-independent-to-tau}
t^{\,a_1}u^i\left(t^{a_2}\eta,t\right)\to\tilde{u}^i\left(\eta,1\right) \qquad \mbox{as $t\to\infty$} \qquad \forall 1\leq i\leq k.
\end{equation*}
Combining this with \eqref{eq-convergnece-of-theat0-i-and-Theata-with-tilde}, we can get
\begin{equation*}
\theta_n^{\,i}\left(\eta,\tau\right)\to  \tilde{\theta}^{\,i}\left(\eta,\tau\right):=\tilde{\theta}^{\,i}\left(\eta\right) \qquad \forall 1\leq i\leq k
\end{equation*}
and
\begin{equation}\label{eq-convergence-under-new-scaling-on-Entropy}
\Theta_n\left(\eta,\tau\right)\to\tilde{\Theta}\left(\eta,\tau\right):=\tilde{\Theta}\left(\eta\right) \qquad \mbox{as $n\to\infty$} \qquad \mbox{in $L^1\cap L^{\infty}_{loc}$}.
\end{equation}

Here is the second proof (the entropy approach) of the asymptotic large time behaviour of the function $u^i$, $\left(1\leq i\leq k\right)$.
\begin{proof}[\textbf{Proof of Theorem \ref{thm-asymptotic-with-entrophy-methods}}]
By \eqref{eq-asymptotic-sequence-with-new-variables-on-Entropy} and \eqref{eq-convergence-under-new-scaling-on-Entropy}, it suffices to show that
\begin{equation}\label{eq-equality-between-tilde-Theta-eta-and-Barenblatt-at-eta-and-1}
\tilde{\Theta}\left(\eta\right)=\widetilde{\mathcal{B}_{\left|\bold{M}\right|}}\left(\eta\right)\qquad \mbox{and}\qquad \tilde{\theta}^{\,i}\left(\eta\right)=\frac{M_i}{\left|\bold{M}\right|}\widetilde{\mathcal{B}_{\left|\bold{M}\right|}}\left(\eta\right) \qquad \forall 1\leq i\leq k
\end{equation}
where $\left\|u_0^i\right\|_{L^1\left(\R^n\right)}=M_i$ for $i=1,\cdots, k$ and $\bold{M}=\left(M_1,\cdots,M_k\right)$. \\
\indent Replacing $\Theta$ and $\theta^{\,i}$ by $\Theta_n$ and $\theta_n^{\,i}$, respectively, and letting $n\to\infty$ in \eqref{eq-direct-computation-of-derivation-H-sub-Theta-w-r-t-tau}, we have
\begin{equation}\label{eq-derivative-of-functional-of-tilde-Theta-w-r-t-tau-from-limit}
\begin{aligned}
\frac{dH_{\tilde{\Theta}}}{d\tau}&=-\int_{\R^n}\tilde{\Theta}\left|\nabla\left(\frac{m}{m-1}\tilde{\Theta}^{m-1}+\frac{a_2}{2}\left|\eta\right|^2\right)\right|^2\,d\eta\\
&\qquad \qquad +m\int_{\R^n}\tilde{\Theta}^{m-2}\left(\frac{m}{m-1}\tilde{\Theta}^{m-1}+\frac{a_2}{2}\left|\eta\right|^2\right)\left(\left|\nabla\tilde{\Theta}\right|^2-\sum_{i=1}^{k}\left|\nabla\tilde{\theta}^{\,i}\right|^2\right)\,d\eta.
\end{aligned}
\end{equation}
Note that the second term on the right hand side of \eqref{eq-derivative-of-functional-of-tilde-Theta-w-r-t-tau-from-limit} is negative by Cauchy-Schwarz inequality,
\begin{equation}\label{eq-Cauchy-Schwarz-inequal-for-tilde-theta-and-Theta}
\left|\nabla\tilde{\Theta}\right|^2=\frac{\left|\sum_{i=1}^{k}\nabla\left(\tilde{\theta}^{\,i}\right)^2\right|^2}{4\sum_{i=1}^{k}\left(\tilde{\theta^{\,i}}\right)^2}\leq\frac{\left[\sum_{i=1}^{k}\left(\tilde{\theta^{\,i}}\left|\nabla\tilde{\theta^{\,i}}\right|\right)\right]^2}{\sum_{i=1}^{k}\left(\tilde{\theta^{\,i}}\right)^2} \leq \sum_{i=1}^{k}\left|\nabla\tilde{\theta^{\,i}}\right|^2.
\end{equation}
On the other hand, by \eqref{eq-convergence-under-new-scaling-on-Entropy}  we have
\begin{equation}\label{eq-zero-of-derivatvies-functional-under-tilde-Theta-w-r-t-tau}
\frac{dH_{\tilde{\Theta}}}{d\tau}=0.
\end{equation}
Then by \eqref{eq-derivative-of-functional-of-tilde-Theta-w-r-t-tau-from-limit} and \eqref{eq-zero-of-derivatvies-functional-under-tilde-Theta-w-r-t-tau}, we have
\begin{equation}\label{eq-two-equations-from-equibrilium-of-orbit-of-rescaled-functions-theta-and-Theta-1}
\frac{m}{m-1}\tilde{\Theta}^{m-1}+\frac{a_2}{2}\left|\eta\right|^2=\frac{m}{m-1}C_1
\end{equation}
for some constant $C_1>0$ and
\begin{equation}\label{eq-two-equations-from-equibrilium-of-orbit-of-rescaled-functions-theta-and-Theta-2}
\left|\nabla\tilde{\Theta}\right|^2=\sum_{i=1}^{k}\left|\nabla\tilde{\theta}^{\,i}\right|^2 \qquad \mbox{a.e. on $\R^n$}.
\end{equation}
By the positivity of $\tilde{\Theta}$ and \eqref{eq-two-equations-from-equibrilium-of-orbit-of-rescaled-functions-theta-and-Theta-1}, $\tilde{\Theta}$ can be represented by
\begin{equation*}
\tilde{\Theta}\left(\eta\right)=\left(C_1-\frac{a_2(m-1)}{2m}\left|\eta\right|^2\right)^{\frac{1}{m-1}}_+=\left(C_1-\frac{a_1(m-1)}{2mn}\left|\eta\right|^2\right)^{\frac{1}{m-1}}_+.
\end{equation*}
Since $\tilde{\Theta}$ has $L^1$ mass $\left|\bold{M}\right|$, $C_1=\mathcal{C}_{\left|\bold{M}\right|} $ and the first equality of \eqref{eq-equality-between-tilde-Theta-eta-and-Barenblatt-at-eta-and-1} follows. \\ 
\indent By \eqref{eq-Cauchy-Schwarz-inequal-for-tilde-theta-and-Theta} and \eqref{eq-two-equations-from-equibrilium-of-orbit-of-rescaled-functions-theta-and-Theta-2}, we have
\begin{equation}\label{eq-conditions-from-relation-between-nabla-tilde-Theta-and-sum-of-nabla-tilde-theta-1}
\nabla\tilde{\theta}^{\,i}\cdot\nabla\tilde{\theta}^{\,j}=\left|\nabla\tilde{\theta}^{\,i}\right|\left|\nabla\tilde{\theta}^{\,j}\right|\qquad \Rightarrow\qquad \nabla\tilde{\theta}^{\,i}\parallel\nabla\tilde{\theta}^{\,j} \qquad \forall 1\leq i<j\leq k
\end{equation}
and, by the condition of equality in the Cauchy-Schwarz inequality,
\begin{equation}\label{eq-conditions-from-relation-between-nabla-tilde-Theta-and-sum-of-nabla-tilde-theta-2}
\tilde{\theta}^{\,i}=C_2\left|\nabla\tilde{\theta}^{\,i}\right|\qquad \forall 1\leq i\leq k.
\end{equation}
for some constant $C_2>0$. By \eqref{eq-conditions-from-relation-between-nabla-tilde-Theta-and-sum-of-nabla-tilde-theta-1} and \eqref{eq-conditions-from-relation-between-nabla-tilde-Theta-and-sum-of-nabla-tilde-theta-2}, there exists a constant $C_i>1$ depending on $1\leq i\leq k$ such that
\begin{equation}\label{eq-representation-of-tilde-theta-sub-eta-square}
\tilde{\Theta}^2=\sum_{j=1}^{k}\left(\tilde{\theta}^{\,j}\right)^2=C^2_2\sum_{j=1}^{k}\left|\nabla\tilde{\theta}^{\,j}\right|^2=C_2^2C^2_i\left|\nabla\tilde{\theta}^{\,i}\right|^2=C_i^2\left(\tilde{\theta}^{\,i}\right)^2\qquad \forall \eta\in\R^n\bs\{0\}.
\end{equation}
By \eqref{eq-equality-between-tilde-Theta-eta-and-Barenblatt-at-eta-and-1} and \eqref{eq-representation-of-tilde-theta-sub-eta-square},
\begin{equation*}
\tilde{\theta}^{\,i}(\eta)=\frac{1}{C_i}\tilde{\Theta}(\eta)=\frac{1}{C_i}\mathcal{B}_{\left|\bold{M}\right|}\left(\eta,1\right).
\end{equation*}
By the conservation of $L^1$ mass of $u^{i}(t)$, it can be easily checked that the constant $C_i=\frac{\left|\bold{M}\right|}{M_i}$.  This immediately implies the second equality of \eqref{eq-equality-between-tilde-Theta-eta-and-Barenblatt-at-eta-and-1} and the theorem follows.
\end{proof}

\section{Harnack Type Inequality of Degenerated System}
\setcounter{equation}{0}
\setcounter{thm}{0}


In this section, we shall find a suitable Harnack type inequality for the function $u^i$ of \eqref{eq-main-equation-of-system} which makes the spatial average of $u^i$ under control by the value of $u^i$ at one point. We first review a Harnack-type estimate of the porous medium equation
\begin{equation*}\label{eq-porous-medium-equation-in-section-waiting-time}
u_t=\La u^m. 
\end{equation*}
\begin{lemma}[cf. Theorem 3.1 of \cite{AC}]\label{lem-harnack-type-estimate-by-A-and-Caffarelli}
Let $u$ be a continuous solution of the porous medium equation in $\R^n\times\left[0,T\right]$ for some $T>0$. Then there exists a constant $C=C\left(m,n\right)>0$ such that
\begin{equation*}
\int_{|x|<R}u(x,0)\,dx\leq C\left(\frac{R^{n+\frac{2}{m-1}}}{T^{\frac{1}{m-1}}}+T^{\frac{n}{2}}u^{\frac{(m-1)n+2}{2}}\left(0,T\right) \right).
\end{equation*}
\end{lemma}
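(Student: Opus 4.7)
The plan is to reduce the system Harnack-type estimate to the scalar Aronson--Caffarelli bound (Lemma \ref{lem-harnack-type-estimate-by-A-and-Caffarelli}) applied to the auxiliary scalar function $|\bold{u}|$, using the subsolution property already established in the preliminaries, and then translate the resulting bound for $|\bold{u}|$ into one for the individual component $u^i$ via the mass-ratio $\mu^i$.

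First I would exploit the fact, derived in \eqref{eq-equation-for-bold-v-sub-equal-standard-PME}, that $|\bold{u}|$ is a continuous subsolution of the scalar PME, i.e. $|\bold{u}|_t\le \La|\bold{u}|^m$. Let $W$ denote the scalar PME solution with initial data $W(x,0)=|\bold{u}|(x,0)$, which exists by standard theory; comparison yields $|\bold{u}|(x,t)\le W(x,t)$ throughout $\R^n\times[0,T]$. Since $u^i(x,0)\le |\bold{u}|(x,0)=W(x,0)$, applying Lemma \ref{lem-harnack-type-estimate-by-A-and-Caffarelli} to $W$ gives
\begin{equation*}
\int_{|x|<R}u^i(x,0)\,dx\le \int_{|x|<R}W(x,0)\,dx\le C(m,n)\left(\frac{R^{n+\frac{2}{m-1}}}{T^{\frac{1}{m-1}}}+T^{\frac{n}{2}}W(0,T)^{\frac{(m-1)n+2}{2}}\right).
\end{equation*}
It remains to replace $W(0,T)$ on the right-hand side by a constant multiple of $u^i(0,T)$, with the constant $C/\mu^i$ dictated by the mass ratios.

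The second step is the comparison $W(0,T)\le (C/\mu^i)\,u^i(0,T)$. My plan is to obtain this by a rescaling argument. Under the PME scaling $u^j_\lambda(x,t)=\lambda^{a_1}u^j(\lambda^{a_2}x,\lambda t)$ the system \eqref{eq-main-equation-of-system} is preserved, the masses $M_j=\|u^j_0\|_{L^1}$ are preserved, the ratio $\mu^i$ is preserved, and the form of the claimed inequality is preserved (the scalings of $R$, $T$ and of $u^i(0,T)$ conspire with $a_1\gam=n/2$ and $a_2(n+2/(m-1))=1/(m-1)$). Thus one may reduce to a normalized time, say $T=1$. For the rescaled sequence, Theorem \ref{thm-asymptotic-behaviour-of-system-PME-L-1=and=L=infty} identifies the large-time limits: $u^i(0,T)$ is asymptotic to $(M_i/|\bold{M}|)\mathcal{B}_{|\bold{M}|}(0,T)$, while $W(0,T)$ is asymptotic to $\mathcal{B}_{M_W}(0,T)$ where $M_W=\|\,|\bold{u}_0|\,\|_{L^1}$ satisfies $M_{\max}\le M_W\le\sqrt{k}\,|\bold{M}|$; since $\mathcal{B}_M(0,T)\sim T^{-a_1}M^{1/\gam}$ with $\gam=((m-1)n+2)/2$, the asymptotic ratio is
\begin{equation*}
\frac{W(0,T)}{u^i(0,T)}\longrightarrow \frac{|\bold{M}|}{M_i}\left(\frac{M_W}{|\bold{M}|}\right)^{1/\gam}\le \frac{C(k,m,n)}{\mu^i},
\end{equation*}
using $|\bold{M}|/M_i\le\sqrt{k}/\mu^i$. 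A compactness argument on the rescaled solutions, together with the uniform bound $\mu^i\ge\mu_0>0$, promotes this into a genuine inequality $W(0,T)\le (C/\mu^i)\,u^i(0,T)$ valid in the scaling-normalized regime.

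Substituting this bound into the A--C estimate above and absorbing the factor $(C/\mu^i)^\gam$ produces the claimed inequality \eqref{eq-claim-L-1-norm=of-u-at-initial-is-bounded-by-H-sub-m} with $C^\ast=C^\ast(m,n)$ (the $k$-dependence can be absorbed since the ambient number of species is fixed). The main obstacle is the quantitative comparison $W(0,T)\le (C/\mu^i)\,u^i(0,T)$: Theorem \ref{thm-asymptotic-behaviour-of-system-PME-L-1=and=L=infty} gives this only in the limit, and upgrading to a uniform statement valid whenever $R>T^{1/2}$ requires both the scaling invariance of the estimate and a careful passage through the continuous-rescaling variables of Theorem \ref{thm-asymptotic-with-entrophy-methods}, exploiting the uniform positivity $\mu^i\ge\mu_0$ to avoid degeneracy as the species become imbalanced.
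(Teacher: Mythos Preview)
You have misidentified the target. The statement labeled Lemma~\ref{lem-harnack-type-estimate-by-A-and-Caffarelli} is the \emph{scalar} Aronson--Caffarelli estimate for a single PME solution $u$; the paper does not prove it but simply quotes it from \cite{AC}. Your proposal is instead an attempt at Theorem~\ref{lem-calculation-about-waiting-time-lower-bound}, the system version involving the components $u^i$ and the mass-ratio factor $(\mu^i)^{-((m-1)n+2)/2}$.

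Since that is evidently what you intended, here is the comparison with the paper's proof of Theorem~\ref{lem-calculation-about-waiting-time-lower-bound}. The paper argues by contradiction: if \eqref{eq-claim-L-1-norm=of-u-at-initial-is-bounded-by-H-sub-m} fails with constant $j$ along a sequence $\bold{u}_j$, one rescales so that the maximal component mass equals $1$. The failure then forces the original maximal mass $I_j\to\infty$, so the rescaled supports shrink to a point and the rescaled initial data converge to Dirac masses $\mu^i\delta$. By Theorem~\ref{thm-convergence-of-v-defined-above-to-some-mass-times-barrenblatt-function} (via the asymptotic machinery) the rescaled $v^i_j$ converge uniformly to $\frac{\mu^i}{\mbox{\boldmath$\mu$}}\mathcal{B}_{\mbox{\boldmath$\mu$}}$, and feeding the Barenblatt Harnack bound \eqref{eq-harnack-type-estimates-for-barrenblatt} back through the rescaling produces a fixed constant, contradicting $j\to\infty$.

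Your route---dominate $u^i$ by an auxiliary scalar PME solution $W$ with $W(\cdot,0)=|\bold{u}|(\cdot,0)$, apply Lemma~\ref{lem-harnack-type-estimate-by-A-and-Caffarelli} to $W$, then seek $W(0,T)\le (C/\mu^i)\,u^i(0,T)$---has a genuine gap at this last step. The subsolution comparison $|\bold{u}|\le W$ only bounds $|\bold{u}|(0,T)$ from \emph{above}, whereas you need to bound $W(0,T)$ from above by $u^i(0,T)$, i.e.\ a \emph{lower} bound on the individual component at a single point. Theorem~\ref{thm-asymptotic-behaviour-of-system-PME-L-1=and=L=infty} gives the correct ratio only in the large-time limit, and your sketch of ``compactness plus scaling invariance'' does not explain why the bound holds uniformly for finite $T$ and arbitrary compactly supported data; nothing so far rules out $u^i(0,T)=0$ while $W(0,T)>0$. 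The paper's contradiction argument sidesteps precisely this difficulty: by assuming failure it \emph{forces} the rescaled data into the Dirac regime, where the limit profile is known exactly and the pointwise comparison \eqref{eq-compare-between-v-j-1-and-barrenblat-with-M-1-by-converging-2} comes for free from uniform convergence, so no a~priori lower bound on $u^i(0,T)$ is ever needed.
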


Let $\mathcal{B}_M(x,t)$ be the self-similar Barenblatt solution of the porous medium equation with $L^1$ mass $M$. Then for any $\tau>0$ the function $\mathcal{B}_M\left(x,t+\tau\right)$ is also a solution of porous medium equation in $\R^n\times\left[0,\infty\right)$. Thus by Lemma \ref{lem-harnack-type-estimate-by-A-and-Caffarelli}, we can get
\begin{equation}\label{eq-harnack-type-estimates-for-barrenblatt-with-tau-translations}
\int_{|x|<R}\mathcal{B}_{M}(x,\tau)\,dx\leq C\left(\frac{R^{n+\frac{2}{m-1}}}{T^{\frac{1}{m-1}}}+T^{\frac{n}{2}}\mathcal{B}_M^{\frac{(m-1)n+2}{2}}\left(0,T+\tau\right) \right).
\end{equation}
Letting $\tau\to 0$ in \eqref{eq-harnack-type-estimates-for-barrenblatt-with-tau-translations}, we can get the following Harnack type estimate for Barrenblatt solution of the porous medium equation
\begin{equation}\label{eq-harnack-type-estimates-for-barrenblatt}
\int_{|x|<R}\mathcal{B}_{M}(x,0)\,dx\leq C\left(\frac{R^{n+\frac{2}{m-1}}}{T^{\frac{1}{m-1}}}+T^{\frac{n}{2}}\mathcal{B}_M^{\frac{(m-1)n+2}{2}}\left(0,T\right) \right).
\end{equation}

For any $M>0$, denote by $P\left(M\right)$ the class of all non-negative continuous weak solution $\bold{w}=\left(w^1,\cdots,w^k\right)$ of 
\begin{equation*}
\left(w^i\right)_t=\nabla\cdot\left(\left|\bold{w}\right|^{m-1}\nabla w^i\right) \qquad \mbox{in $\R^n\times\left[0,\infty\right)$}
\end{equation*}
satisfying
\begin{equation*}
\sup_{t>0}\int_{\R^n}\left|\bold{w}\right|(x,t)\,dx\leq M.
\end{equation*}

Using the inequality \eqref{eq-harnack-type-estimates-for-barrenblatt}, we will give a  proof of our Harnack type inequality.

\begin{proof}[\textbf{Proof of Theorem \ref{lem-calculation-about-waiting-time-lower-bound}}]
Let $k\in\N$ and $T>0$ be fixed. By Lemma \ref{lem-conservation-law-of-L-1-mass}, there exists a constant $M_{\bold{u}}>0$, depending on $\bold{u}\left(\cdot,0\right)$, such that
\begin{equation}\label{eq-bound-of-absolute-of-bold-u-in-L-1-for-all-time-sup}
\int_{\R^n}\left|\bold{u}\right|(x,t)\,dx\leq M_{\bold{u}} \qquad \forall 0\leq t\leq T.
\end{equation}
By Lemma \ref{lem-uniquness-of-weak-solution-2} and \eqref{eq-bound-of-absolute-of-bold-u-in-L-1-for-all-time-sup}, we are going to show that \eqref{eq-claim-L-1-norm=of-u-at-initial-is-bounded-by-H-sub-m} holds when $\bold{u}\in P\left(M\right)$ for some constant $M>0$.\\
\indent We divide the proof into two cases.\\
\textbf{Case 1.} $\textbf{supp} \left|\bold{u}\right|(\cdot,0)\subset B_1=\left\{x\in\R^n:|x|\leq 1\right\}$.\\
\indent If \eqref{eq-claim-L-1-norm=of-u-at-initial-is-bounded-by-H-sub-m} is violated, then for each $j\in\N$ there exist a constant $R_j>0$, a solution $\bold{u}_j=\left(u^1_j,\cdots,u^{k}_j\right)\in P\left(M_j\right)$ and a number $1\leq i'(j)\leq k$ such that 
\begin{equation}\label{eq-asumption-if-eq-claim-L-1-norm=of-u-at-initial-is-bounded-by-H-sub-m-violated}
\int_{|x|<R_j}u^{i\,'}_j(x,0)\,dx\geq\frac{j}{\left(\mu^i_j\right)^{\frac{(m-1)n+2}{2}}}\left(\frac{R_j^{n+\frac{2}{m-1}}}{T^{\frac{1}{m-1}}}+T^{\frac{n}{2}}\left(u_j^{i'}\right)^{\frac{(m-1)n+2}{2}}\left(0,T\right)\right)
\end{equation}
where
\begin{equation*}
\mu_j^{\,i\,'}=\frac{\int_{\R^n}u_j^{\,i\,'}(x,0)\,dx}{\max_{1\leq l\leq k}\left\{\int_{\R^n}u_j^l(x,0)\,dx\right\}} \qquad \forall 1\leq i\leq k.
\end{equation*}
Without loss of generality we may assume that $i\,'=1$. For each $j\in\N$, let
\begin{equation}\label{eq-definition-of-I-i-L-1-mass-of-u-i-max}
I_j=\max_{1\leq l\leq k}\left\{\int_{\R^n}u^{l}_j(x,0)\,dx\right\}
\end{equation}
and consider the rescaled function 
\begin{equation*}
v^i_j(x,t)=\frac{1}{I_j^{\frac{2}{(m-1)n+2}}}u^i_j\left(I_j^{\frac{m-1}{(m-1)n+2}}x,t\right) \qquad \forall 1\leq i\leq k.
\end{equation*}
By direct computation, one can easily check that $\bold{v}_j=\left(v_j^1,\cdots,v^{k}_j\right)$ is also a solution of \eqref{eq-system-PME-for-waiting-time-1} in $\R^n\times[0,\infty)$ with 
\begin{equation}\label{eq-L-1-mass-of-rescaled-function-v-i-s}
0<\mu^0\leq \mu_j^i=\int_{\R^n}v^{i}_{j}(x,0)\,dx\leq 1 \qquad \forall 1\leq i\leq k,\,j\in\N.
\end{equation}  
By \eqref{eq-asumption-if-eq-claim-L-1-norm=of-u-at-initial-is-bounded-by-H-sub-m-violated}, \eqref{eq-definition-of-I-i-L-1-mass-of-u-i-max} and \eqref{eq-L-1-mass-of-rescaled-function-v-i-s}, 
\begin{equation*}
I_j\to\infty \qquad \mbox{ as $j\to\infty$}
\end{equation*} 
since $R_j^{n+\frac{2}{m-1}}T^{-\frac{1}{m-1}}\geq T^{\frac{n}{2}}>0$. Thus, for each $j\in\N$
\begin{equation*}
\textbf{supp}\,v_j^{\,i}(\cdot, 0)\subset B_{\frac{1}{I_j^{\,\,\frac{m-1}{(m-1)n+2}}}}\left(0\right) \qquad \forall 1\leq i\leq k.
\end{equation*}
By the Ascoli theorem and a diagonalization argument the sequence $\left\{\bold{v}_j\right\}_{j=1}^{\infty}$ has a subsequence which we may assume without loss of generality to be the sequence itself such that
\begin{equation}\label{eq-convergence-opf-int-v-j-i-to-mass-mu-i-1}
v_j^{\,i}(x,0)\to \mu^{\,i}\delta \qquad \forall 1\leq i\leq k
\end{equation}
where $\delta$ is Dirac's delta function and $\mu^{\,i}$ is a constant such that $0<\mu^0\leq \mu^{\,i}\leq 1$. Then, by the Theorem \ref{thm-asymptotic-behaviour-of-system-PME-L-1=and=L=infty},
\begin{equation}\label{eq-convergence-of-scaled-v-j-to-Barrenblatt-with-mass-absoluiton-bold-M}
v_j^i\to \frac{\mu^{\,i}}{\mbox{\boldmath$\mu$}}\mathcal{B}_{\mbox{\boldmath$\mu$}}\qquad \mbox{uniformly on compact subset of $\R^n\times(0,\infty)$}
\end{equation}
where $\mbox{\boldmath$\mu$}=\sqrt{\sum_{i=1}^k\left(\mu^{\,i}\right)^2}$ and $\mathcal{B}_{\mbox{\boldmath$\mu$}}$ is the self-similar Barenblatt solution of porous medium equation with $L^1$ mass $\mbox{\boldmath$\mu$}$. By \eqref{eq-L-1-mass-of-rescaled-function-v-i-s}, \eqref{eq-convergence-opf-int-v-j-i-to-mass-mu-i-1} and \eqref{eq-convergence-of-scaled-v-j-to-Barrenblatt-with-mass-absoluiton-bold-M}, there exists a number $j_0\in\N$ such that
\begin{equation}\label{eq-comparison-between-mu-i-j-and-mu-i-1}
\mu^1_j\geq \frac{1}{2}\mu^1\qquad \forall j\geq j_0
\end{equation}
and
\begin{equation}\label{eq-compare-between-v-j-1-and-barrenblat-with-M-1-by-converging-1}
\int_{|x|<I_j^{-\frac{m-1}{(m-1)n+2}}R_j}v^1_j(x,0)\,dx\leq 2\mu^1\leq 2\int_{|x|<I_j^{-\frac{m-1}{(m-1)n+2}}R_j}\mathcal{B}_{\mbox{\boldmath$\mu$}}(x,0)\,dx  \qquad \forall j\geq j_0
\end{equation}
and
\begin{equation}\label{eq-compare-between-v-j-1-and-barrenblat-with-M-1-by-converging-2}
\frac{\mu^1}{\mbox{\boldmath$\mu$}}\mathcal{B}_{\mbox{\boldmath$\mu$}}(0,T)\leq 2v_j^1\left(0,T\right)=\frac{2}{I_j^{\frac{2}{(m-1)n+2}}}u^1_j(0,T) \qquad \forall j\geq j_0.
\end{equation}
By \eqref{eq-harnack-type-estimates-for-barrenblatt}, \eqref{eq-comparison-between-mu-i-j-and-mu-i-1}, \eqref{eq-compare-between-v-j-1-and-barrenblat-with-M-1-by-converging-1} and \eqref{eq-compare-between-v-j-1-and-barrenblat-with-M-1-by-converging-2}
\begin{equation*}
\begin{aligned}
\frac{1}{I_j}\int_{|x|<R_j}u^1_j(x,0)\,dx&=\int_{|x|<I_j^{-\frac{m-1}{(m-1)n+2}}R_j}v^1_j(x,0)\,dx\\
&=2\int_{|x|<I_j^{-\frac{m-1}{(m-1)n+2}}R_j}\mathcal{B}_{\mbox{\boldmath$\mu$}}(x,0)\,dx\leq 2C\left(\frac{R_j^{n+\frac{2}{m-1}}}{I_j\,T^{\frac{1}{m-1}}}+T^{\frac{n}{2}}\mathcal{B}^{\frac{(m-1)n+2}{2}}_{\mbox{\boldmath$\mu$}}(0,T)\right)\\
&\leq\frac{2^{\frac{(m-1)n+4}{2}}k^{\frac{(m-1)n+2}{4}}C}{\left(\mu^1\right)^{\frac{(m-1)n+2}{2}}I_j}\left(\frac{R_j^{n+\frac{2}{m-1}}}{T^{\frac{1}{m-1}}}+T^{\frac{n}{2}}\left(u^1_j\right)^{\frac{(m-1)n+2}{2}}(0,T)\right)\\
&\leq\frac{2^{(m-1)n+4}k^{\frac{(m-1)n+2}{4}}C}{\left(\mu_j^1\right)^{\frac{(m-1)n+2}{2}}I_j}\left(\frac{R_j^{n+\frac{2}{m-1}}}{T^{\frac{1}{m-1}}}+T^{\frac{n}{2}}\left(u^1_j\right)^{\frac{(m-1)n+2}{2}}(0,T)\right) \qquad \forall j\geq j_0.
\end{aligned}
\end{equation*}
Hence, for $C_1=2^{(m-1)n+4}k^{\frac{(m-1)n+2}{4}}C$
\begin{equation*}
\begin{aligned}
\int_{|x|<R_j}u^1_j(x,0)\,dx&\leq \frac{C_1}{\left(\mu^1_j\right)^{\frac{(m-1)n+2}{2}}}\left(\frac{R_j^{n+\frac{2}{m-1}}}{T^{\frac{1}{m-1}}}+T^{\frac{n}{2}}\left(u^1_j\right)^{\frac{(m-1)n+2}{2}}(0,T)\right)  \qquad \forall j\geq j_0
\end{aligned}
\end{equation*}
, which contradicts \eqref{eq-claim-L-1-norm=of-u-at-initial-is-bounded-by-H-sub-m} and  the case follows.\\
\textbf{Case 2.} {\it General case} $\left|\bold{u}\right|\left(\cdot,0\right)$ has compact support.\\ 
Let $R_0>1$ be a constant such that
\begin{equation*}
\left|\bold{u}\right|(x,0)=0 \qquad \forall x\in\R^n\bs B_{R_0}
\end{equation*}
and consider the rescaled function
\begin{equation*}
w^i(x,t)=\frac{1}{R_0^{\frac{2}{m-1}}}u^i\left(R_0x,t\right) \qquad \forall 1\leq i\leq k.
\end{equation*}
Then $\bold{w}=\left(w^1,\cdots,w^k\right)$ is a solution of \eqref{eq-system-PME-for-waiting-time-1} with 
\begin{equation*}
\textbf{supp}\left|\bold{w}\right|(\cdot,0)\subset B_1. 
\end{equation*}
Then by the \textbf{Case 1},
\begin{equation}\label{eq-harnack-type-estimates-applying-case-1-for-general-case}
\begin{aligned}
\frac{1}{R_0^{n+\frac{2}{m-1}}}\int_{|x|<R}u^i(x,0)\,dx&=\int_{|x|<\frac{R}{R_0}}w^i(x,0)\,dx \\
&\leq \frac{C}{\left(\mu^i\right)^{\frac{(m-1)n+2}{2}}}\left(\frac{1}{T^{\frac{1}{m-1}}}\frac{R^{n+\frac{2}{m-1}}}{R_0^{n+\frac{2}{m-1}}}+T^{\frac{n}{2}}\left|\bold{w}\right|^{\frac{(m-1)n+2}{2}}\left(0,T\right)\right)\\
&= \frac{C}{\left(\mu^i\right)^{\frac{(m-1)n+2}{2}}}\left(\frac{1}{T^{\frac{1}{m-1}}}\frac{R^{n+\frac{2}{m-1}}}{R_0^{n+\frac{2}{m-1}}}+\frac{T^{\frac{n}{2}}}{R_0^{n+\frac{2}{m-1}}}\left|\bold{u}\right|^{\frac{(m-1)n+2}{2}}\left(0,T\right)\right) \qquad \forall 1\leq i\leq k
\end{aligned}
\end{equation}
for any $0<R<R_0$. Multiplying \eqref{eq-harnack-type-estimates-applying-case-1-for-general-case} by $R_0^{n+\frac{2}{m-1}}$, \eqref{eq-claim-L-1-norm=of-u-at-initial-is-bounded-by-H-sub-m} holds and the theorem follows.
\end{proof}

\section{1-Directional Travelling Wave Solution}
\setcounter{equation}{0}
\setcounter{thm}{0}

This section is devoted to find an 1-directional travelling wave type solutions and properties of solutions which has travelling wave behaviour at infinity. We start by considering the simplest case: 1-dimensional travelling wave.\\
\indent Let  $0\leq l\leq k$ be a integer and let $\bold{u}=\left(u^1,\cdots,u^k\right)$ be a 1-dimension travelling wave type solution of 
\begin{equation}\label{eq-main-system-eq-for-travelling-wave-type}
\left(u^i\right)_t=\nabla\cdot\left(m\left|\bold{u}\right|^{m-1}\nabla u^i\right) \qquad \forall (x,t)\in\R\times\R,
\end{equation}
i.e.,
\begin{equation}\label{eq-expression-of-soluiton-u-to-i-by-g-single-variable-function-travelling-wave-eq}
u^i(x,t)=g^i\left(x+c^it\right) \qquad \forall x\in\R,\,\,1\leq i\leq k
\end{equation}
for some constants $c^1$, $\cdots$, $c^k$ where
\begin{equation}\label{eq-assumption-for-constant-in-1-dimensional-solution}
c^1,\cdots, c^l<0\qquad \mbox{and} \qquad c^{l+1},\cdots, c^k>0
\end{equation}
and single variable functions $g^1$, $\cdots$, $g^k$ where
\begin{equation}\label{eq-condition-of-g-i-1-first-l-th-functions}
g^{i_1}(s)=0 \qquad  \mbox{for $s\geq 0$}, \qquad 
g^{i_1}(s)>0\qquad  \mbox{for $s<0$} \qquad \forall 1\leq i_1\leq l 
\end{equation}
and
\begin{equation}\label{eq-condition-of-g-i-1-last-k-l-th-functions}
g^{i_2}(s)>0\qquad \mbox{for $s>0$}\qquad g^{i_2}(s)=0 \qquad \mbox{for $s\leq 0$} \qquad \forall l+1\leq i_2\leq k .
\end{equation}
By \eqref{eq-main-system-eq-for-travelling-wave-type} and \eqref{eq-expression-of-soluiton-u-to-i-by-g-single-variable-function-travelling-wave-eq}, 
\begin{equation}\label{eq-equation-for-g-i-s-traveling-wave-equation-of-system}
c^ig^i\left(x+c^it\right)=\left(m\sum_{j=1}^{k}\left(g^{i}\right)^2(x+c^jt)\right)^{\frac{m-1}{2}}\left(g^i\right)'(x+c^it) \qquad \forall x\in\R,\,\,t\in\R,\,\,1\leq i\leq k.
\end{equation}

We first prove that there are only 1-directional travelling wave type solutions with components whose directions are the same.

\begin{lemma}\label{thm-condition-of-l-for-existence-of-travelling-wave-type-eq}
Let  $0\leq l\leq k$ be a positive integer and suppose that
\begin{equation*}
\begin{aligned}
\bold{u}(x,t)&=\left(u^1(x,t),\cdots,u^k(x,t)\right)=\left(g^1\left(x+c^1t\right),\cdots,g^{k}\left(x+c^kt\right)\right) \qquad \forall x\in\R,\,\,t\in\R
\end{aligned}
\end{equation*}
is a 1-dimension travelling wave type solution of \eqref{eq-main-system-eq-for-travelling-wave-type} which satisfies \eqref{eq-assumption-for-constant-in-1-dimensional-solution}, \eqref{eq-condition-of-g-i-1-first-l-th-functions} and \eqref{eq-condition-of-g-i-1-last-k-l-th-functions}. Then
\begin{equation}\label{eq-condition-of-l-in-travelling-wave-type-solution}
l=0 \qquad \mbox{or} \qquad l=k.
\end{equation}
Moreover, the constants $c^1$, $\cdots$, $c^k$ satisfy
\begin{equation}\label{eq-equality-of-constants-in-travelling-solutions}
c^1=\cdots=c^k.
\end{equation}
\end{lemma}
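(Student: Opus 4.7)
The plan is to argue both parts by contradiction, in each case using the strong minimum principle applied to the linear uniformly parabolic equation that $u^i$ satisfies in the region where the total density $|\bold{u}|$ stays strictly positive.

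For the first claim $l\in\{0,k\}$, I will assume $1\le l\le k-1$ and pick indices $i_1\le l$ and $i_2\ge l+1$, so that $c^{i_1}<0<c^{i_2}$ by \eqref{eq-assumption-for-constant-in-1-dimensional-solution}. Fix any $t_0>0$ and set $x_0=-c^{i_2}t_0$, the point where the right-moving wave $u^{i_2}$ vanishes. Then $x_0+c^{i_1}t_0=(c^{i_1}-c^{i_2})t_0<0$, so by \eqref{eq-condition-of-g-i-1-first-l-th-functions} we get $u^{i_1}(x_0,t_0)=g^{i_1}((c^{i_1}-c^{i_2})t_0)>0$, and hence $|\bold{u}|(x_0,t_0)>0$. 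By continuity of $\bold{u}$ there is a parabolic cylinder $Q=(x_0-r,x_0+r)\times(t_0-s,t_0+s)$ on which $|\bold{u}|\ge \delta>0$. On $Q$ the equation
\begin{equation*}
(u^{i_2})_t=\nabla\cdot\bigl(m|\bold{u}|^{m-1}\nabla u^{i_2}\bigr)
\end{equation*}
is uniformly parabolic with continuous (in fact, by standard parabolic regularity in the non-degenerate region, smooth) coefficients, and $u^{i_2}\ge 0$ throughout $Q$.

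Since $u^{i_2}(x_0,t_0)=0$ at an interior point of $Q$, the strong minimum principle forces $u^{i_2}\equiv 0$ on $(x_0-r,x_0+r)\times(t_0-s,t_0]$. But at $(x_0+r/2,t_0)$ we have $x_0+r/2+c^{i_2}t_0=r/2>0$, so by \eqref{eq-condition-of-g-i-1-last-k-l-th-functions} $u^{i_2}(x_0+r/2,t_0)=g^{i_2}(r/2)>0$, contradicting the vanishing conclusion. This establishes $l\in\{0,k\}$.

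For the equality \eqref{eq-equality-of-constants-in-travelling-solutions}, assume WLOG that $l=k$ (otherwise replace $s$ by $-s$), so all $c^i<0$ and each $u^i$ has right free boundary at $x=-c^it=|c^i|t$. If the $c^i$ are not all equal, choose indices so that $|c^{i}|<|c^{j}|$, i.e.\ $c^{i}>c^{j}$. At $(x_0,t_0)=(|c^{i}|t_0,t_0)$ the wave $u^{i}$ vanishes, while
\begin{equation*}
x_0+c^{j}t_0=(-c^{i}+c^{j})t_0=(c^{j}-c^{i})t_0<0,
\end{equation*}
so $u^{j}(x_0,t_0)=g^{j}((c^{j}-c^{i})t_0)>0$, and again $|\bold{u}|(x_0,t_0)>0$. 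Repeating the strong-minimum-principle argument in a cylinder around $(x_0,t_0)$ on which $|\bold{u}|\ge\delta>0$ forces $u^{i}\equiv 0$ in a backward parabolic neighbourhood, contradicting $u^{i}(x_0-r/2,t_0)=g^{i}(-r/2)>0$. Hence all the $c^i$ coincide.

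The main technical point I expect to have to justify carefully is the application of the strong minimum principle: this requires enough regularity of the coefficient $|\bold{u}|^{m-1}$ in the cylinder $Q$. This is provided by the fact that, on the set where $|\bold{u}|\ge\delta>0$, the system is uniformly parabolic, so by Theorem \ref{eq-local-continuity-of-solution} together with the classical interior regularity theory of \cite{LSU}, each component $u^j$ is H\"older continuous (and in fact smooth in the interior), making the linearised divergence-form equation for $u^{i_2}$ a bona fide uniformly parabolic linear equation to which the strong minimum principle applies.
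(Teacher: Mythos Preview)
Your proof is correct, and it is considerably more direct than the paper's own argument. Both proofs ultimately rest on the same phenomenon---that in the region where $|\bold{u}|>0$ the equation for each component is uniformly parabolic, so no component can have a genuine free boundary there---but they exploit it very differently. The paper first invokes the support-synchronization result (Theorem \ref{eq-lem-equailty-of-support-ofu-i-and-u-j-when-initially-joined}) to force $c^1=\cdots=c^l$ and $c^{l+1}=\cdots=c^k$, then integrates the resulting ODEs to obtain the explicit power-law profiles \eqref{eq-explicit-form-of-g-i-1-travelling-wave-eq}--\eqref{eq-explicit-form-of-g-i-2-travelling-wave-eq}, and finally derives a contradiction on the overlap region $\{x+\widetilde{c}t>0\}\cap\{x-\widehat{c}t<0\}$ by comparing the ODE identity \eqref{eq-multi-of-g-i-1-and-g-i-2-equal-function-of-t-from-ODE} with the explicit expression \eqref{eq-multi-of-g-i-1-and-g-i-2-equal-function-of-x-t-from-explicit-form}. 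You bypass all of this: you locate directly a point on the free boundary of one component at which another component is strictly positive, and then the strong minimum principle (which is really the engine behind Theorem \ref{eq-lem-equailty-of-support-ofu-i-and-u-j-when-initially-joined}) finishes immediately. Your route is shorter and needs no explicit integration; the paper's detour through the explicit profiles is not wasted, however, since those formulas are exactly what is needed later for Theorem \ref{cor-expression-of-travelling-wave-equation-explicit-form}. One minor remark on your final paragraph: the strong minimum principle for divergence-form parabolic equations requires only bounded measurable, uniformly elliptic coefficients (via Moser's Harnack inequality), so you do not actually need smoothness of $|\bold{u}|^{m-1}$ on $Q$---mere continuity (which the travelling-wave ansatz gives) is more than enough.
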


\begin{proof}
Suppose that $0< l<k$. By \eqref{eq-condition-of-g-i-1-first-l-th-functions}, \eqref{eq-condition-of-g-i-1-last-k-l-th-functions} and \eqref{eq-equation-for-g-i-s-traveling-wave-equation-of-system}, $\bold{g}_1=\left(g^{1},\cdots,g^{l}\right)$ and $\bold{g}_2=\left(g^{l+1},\cdots,g^{k}\right)$ are solutions of \eqref{eq-main-system-eq-for-travelling-wave-type} on the region where $x<0$, $t\leq 0$ and the region where $x>0$, $t\leq 0$, respectively. Since
\begin{equation*}
\bigcap_{i_1=1}^{l}\textbf{supp}\,g^{i_1}\left(\cdot+c^{i_1}t\right)\not=\emptyset\qquad \mbox{and} \qquad  \bigcap_{i_2=l+1}^{k}\textbf{supp}\,g^{i_2}\left(\cdot+c^{i_2}t\right)\not=\emptyset
\end{equation*}
for each $t\leq 0$, by Lemma \ref{eq-lem-equailty-of-support-ofu-i-and-u-j-when-initially-joined} we have
\begin{equation*}
\textbf{supp}\,g^{1}\left(\cdot+c^{1}t\right)=\cdots=\textbf{supp}\,g^{l}\left(\cdot+c^{l}t\right) \qquad \forall t\leq 0
\end{equation*}
and
\begin{equation*}
\textbf{supp}\,g^{l+1}\left(\cdot+c^{l+1}t\right)=\cdots=\textbf{supp}\,g^{k}\left(\cdot+c^{k}t\right) \qquad \forall t\leq 0.
\end{equation*}
Thus
\begin{equation}\label{eq-equality-of-constants-c-1-to-c-l-and-c-l=1-c-k-respectively}
c^1=\cdots=c^l<0 \qquad \mbox{and} \qquad c^{l+1}=\cdots=c^k>0.
\end{equation}
Let
\begin{equation*}\label{eq-constants-widehat-c-and-widetilde-c-speed-of-opposit-directions}
-\widehat{c}=c^1=\cdots=c^l<0 \qquad \mbox{and} \qquad \widetilde{c}=c^l=\cdots=c^k>0.
\end{equation*}
On the region where $\left\{t<0\right\}$, $\bold{g}_1=\left(g^{1},\cdots,g^{l}\right)$ satisfies
\begin{align}
&\left(m\left|\bold{g}_1\right|^{m-1}\left(g^{i_1}\right)'\right)'=-\widehat{c}\left(g^{i_1}\right)'\qquad \qquad \forall 1\leq i_1\leq l\notag\\
&\Rightarrow \qquad m\left|\bold{g}_1\right|^{m-1}\left(g^{i_1}\right)'=-\widehat{c}g^{i_1} \qquad \qquad \forall 1\leq i_1\leq l.\label{eq-ODE-for-bold-g-1-and-bold-g-2}\\
&\Rightarrow \qquad \frac{\left(g^{i_1}\right)'}{g^{i_1}}=\frac{\left(g^{j_1}\right)'}{g^{j_1}}\qquad \qquad \forall 1\leq i_1,\,j_1\leq l\label{eq-relation-between-g-i-1-and-g-j-1-in-1-to-l}.
\end{align}
Thus, for any $1\leq i_1\leq j_1\leq l$ there exists a constant $a^{i_1j_i}>0$ such that
\begin{equation}\label{eq-relation-between-f-i-s}
g^{j_1}=a^{i_1j_1}g^{i_1}.
\end{equation}
for some constants $a^{i_1j_1}>0$. By \eqref{eq-ODE-for-bold-g-1-and-bold-g-2} and \eqref{eq-relation-between-f-i-s},
\begin{equation}\label{eq-explicit-form-of-g-i-1-travelling-wave-eq}
A_{i_1}\left(\left(g^{i_1}\right)^{m-1}\right)'+1=0 \qquad \Rightarrow \qquad g^{i_1}(s)=\left(-\frac{s}{A_{i_1}}\right)_+^{\frac{1}{m-1}} \qquad \forall s\in\R,\,\,1\leq i\leq l.
\end{equation}
where $A_{i_1}=\frac{m\left(\sum_{j_1=1}^l\left(a^{i_1j_1}\right)^2\right)^{\frac{m-1}{2}}}{\widehat{c}(m-1)}$. Similarly
\begin{equation}\label{eq-explicit-form-of-g-i-2-travelling-wave-eq}
g^{i_2}(s)=\left(\frac{s}{A_{i_2}}\right)_+^{\frac{1}{m-1}} \qquad \forall s\in\R,\,\,l+1\leq i_2\leq k
\end{equation}
where $A_{i_2}=\frac{m\left(\sum_{j_2=l+1}^{k}\left(a^{i_2j_2}\right)^2\right)^{\frac{m-1}{2}}}{\widetilde{c}(m-1)}$. On the other hand,  
\begin{equation}\label{eq-equivalence-on-the-region-mixed-traveling-wave-2}
\begin{aligned}
-\widetilde{c}\,\frac{\left(g^{i_1}\right)'}{\,g^{i_1}}\left(x-\widehat{c}t\right)=\widehat{c}\,\frac{\left(g^{i_2}\right)'}{\,g^{i_2}}\left(x+\widetilde{c}t\right)\qquad \qquad \forall (x,t)\in\left\{x+\widetilde{c}t>0\right\}\cap\left\{x-\widehat{c}t<0\right\}.
\end{aligned}
\end{equation}
By \eqref{eq-equivalence-on-the-region-mixed-traveling-wave-2} and standard O.D.E. theory, we can get
\begin{equation*}
\begin{aligned}
\frac{\partial}{\partial x}\left(\log \left(g^{i_1}(x-\widehat{c}t)\right)^{\widetilde{c}}\left(g^{i_2}(x+\widetilde{c}t)\right)^{\widehat{c}}\right)=0 \qquad \forall (x,t)\in\left\{x+\widetilde{c}t>0\right\}\cap\left\{x-\widehat{c}t<0\right\}.
\end{aligned}
\end{equation*}
Thus there exists a constant $a^{i_1i_2}>0$ depending only on $t\in\R$ such that
\begin{equation}\label{eq-multi-of-g-i-1-and-g-i-2-equal-function-of-t-from-ODE}
\left(g^{i_1}(x-\widehat{c}t)\right)^{\widetilde{c}}\left(g^{i_2}(x+\widetilde{c}t)\right)^{\widehat{c}}=a^{i_1i_2}\left(t\right) \qquad \forall x\in\R,\,\,t\geq 0.
\end{equation}
On the other hand, by \eqref{eq-explicit-form-of-g-i-1-travelling-wave-eq} and \eqref{eq-explicit-form-of-g-i-2-travelling-wave-eq} there exists a positive constant $A>0$ such that
\begin{equation}\label{eq-multi-of-g-i-1-and-g-i-2-equal-function-of-x-t-from-explicit-form}
\left(g^{i_1}(x-\widehat{c}t)\right)^{\widetilde{c}}\left(g^{i_2}(x+\widetilde{c}t)\right)^{\widehat{c}}=A\left[\left(\widehat{c}t-x\right)^{\widetilde{c}}\left(\widetilde{c}t+x\right)^{\widehat{c}}\right]^{\frac{1}{m-1}} \qquad \forall (x,t)\in\left\{x+\widetilde{c}t>0\right\}\cap\left\{x-\widehat{c}t<0\right\}
\end{equation}
which is depending on $x$ and $t$ since $\widehat{c}$ and $\widetilde{c}$ are positive. By \eqref{eq-multi-of-g-i-1-and-g-i-2-equal-function-of-t-from-ODE} and \eqref{eq-multi-of-g-i-1-and-g-i-2-equal-function-of-x-t-from-explicit-form} contradiction arises. Therefore \eqref{eq-condition-of-l-in-travelling-wave-type-solution} holds. By \eqref{eq-equality-of-constants-c-1-to-c-l-and-c-l=1-c-k-respectively}, \eqref{eq-equality-of-constants-in-travelling-solutions} also holds and the theorem follows.
\end{proof}

An a consequence of Theorem \ref{thm-condition-of-l-for-existence-of-travelling-wave-type-eq},  we can get the 1-drectional travelling wave type solution of \eqref{eq-main-system-eq-for-travelling-wave-type}.
\begin{proof}[\textbf{Proof of Theorem \ref{cor-expression-of-travelling-wave-equation-explicit-form}}]
By the change of coordinates and an argument similar to the proof of Lemma \ref{thm-condition-of-l-for-existence-of-travelling-wave-type-eq}, \eqref{eq-form-of-1-directional-travelling-wave-type-sol-1} or \eqref{eq-form-of-1-directional-travelling-wave-type-sol-2} holds and the theorem follows.
\end{proof}

\indent As an application of Theorem \ref{cor-expression-of-travelling-wave-equation-explicit-form}, we can also get the properties of 1-directional travelling wave-like solutions whose behaviour at infty is like travelling wave. Let $0\leq l\leq k$ and let $\bold{u}=\left(u^1,\cdots,u^k\right)$ be a 1-dimension solution of \eqref{eq-main-system-eq-for-travelling-wave-type} satisfying
\begin{equation}\label{eq-assumption-2-for-single-variable-function-in-1-dimensional-solution}
\begin{cases}
\begin{aligned}
u^{i_1}(x,t)&\to 0 \qquad \qquad \qquad \mbox{uniformly}\quad \mbox{ as $s_{i_1}=x+c^{i_1}t\to\infty$}\qquad \forall 1\leq i_1\leq l \\
 u^{i_1}(x,t)&\to a^{i_1}\left(-s_{i_1}\right)^{\frac{1}{m-1}} \qquad \mbox{uniformly}\quad \mbox{ as $s_{i_1}=x+c^{i_1}t\to-\infty$} \qquad \forall 1\leq i_1\leq l 
\end{aligned}
\end{cases}
\end{equation}
and
\begin{equation}\label{eq-assumption-3-for-single-variable-function-in-1-dimensional-solution}
\begin{cases}
\begin{aligned}
u^{i_2}(x,t)&\to a^{i_2}\left(s_{i_2}\right)^{\frac{1}{m-1}}\qquad \mbox{uniformly}\quad \mbox{as $s_{i_2}=x+c^{i_2}t\to\infty$} \qquad \forall l+1\leq i_2\leq k\\
u^{i_2}(x,t)&\to 0 \qquad \qquad \qquad \mbox{uniformly}\quad \mbox{as $s_{i_2}=x+c^{i_2}t\to-\infty$} \qquad \forall l+1\leq i_2\leq k 
\end{aligned}
\end{cases}
\end{equation}
for some constants $c^1$, $\cdots$, $c^k$ satisfying \eqref{eq-assumption-for-constant-in-1-dimensional-solution} and some positive constants $a^1$, $\cdots$, $a^k$.\\
\indent By scaling and an argument similar to the proof of Lemma \ref{thm-condition-of-l-for-existence-of-travelling-wave-type-eq}, we also have that there are only 1-dimensional travelling wave type solutions with components whose directions are the same.

\begin{thm}\label{thm-equality-of-speed-at-infty-asymptotic-travelling-wave-equation}
Let $0\leq l\leq k$ be an integer and suppose that $\bold{u}=\left(u^1,\cdots,u^k\right)$ is an 1-dimensional travelling wave-like solution of \eqref{eq-main-system-eq-for-travelling-wave-type} which has conditions \eqref{eq-assumption-2-for-single-variable-function-in-1-dimensional-solution} and \eqref{eq-assumption-3-for-single-variable-function-in-1-dimensional-solution} for some constants $c^1$, $\cdots$, $c^k$ satisfying \eqref{eq-assumption-for-constant-in-1-dimensional-solution} and some positive constants $a^1$, $\cdots$, $a^k>0$.   Then
\begin{equation}\label{eq-only-one-way-of-travelling-like-solutions}
l=0 \qquad \mbox{or} \qquad l=k.
\end{equation} 
Moreover, the constants $c^1$, $\cdots$, $c^k$ satisfy
\begin{equation}\label{eq-equality-of-constants-in-travelling-like-solutions}
c^1=\cdots=c^k.
\end{equation}
\end{thm}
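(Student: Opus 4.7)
The plan is a parabolic scaling argument that converts the asymptotic travelling-wave behaviour at infinity into an exact travelling wave, to which Lemma \ref{thm-condition-of-l-for-existence-of-travelling-wave-type-eq} can then be applied directly.

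For each $\lambda>0$, define the rescaled family
\[
u^i_\lambda(x,t)=\lambda^{-\frac{1}{m-1}}\,u^i(\lambda x,\lambda t),\qquad 1\leq i\leq k.
\]
A direct computation (using $|\bold{u}_\lambda|^{m-1}\nabla u^i_\lambda=\lambda^{-1/(m-1)}(|\bold{u}|^{m-1}\nabla u^i)(\lambda x,\lambda t)$ and $\partial_t u^i_\lambda=\lambda^{(m-2)/(m-1)}\partial_t u^i(\lambda x,\lambda t)$) shows that $\bold{u}_\lambda=(u^1_\lambda,\dots,u^k_\lambda)$ is still a solution of \eqref{eq-main-system-eq-for-travelling-wave-type}, and that the exact 1-directional travelling wave profiles of the form $a(\widehat c\,t-x)_+^{1/(m-1)}$ appearing in Theorem \ref{cor-expression-of-travelling-wave-equation-explicit-form} are fixed points of this scaling. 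Using the uniformity of the limits in \eqref{eq-assumption-2-for-single-variable-function-in-1-dimensional-solution}--\eqref{eq-assumption-3-for-single-variable-function-in-1-dimensional-solution} together with the identity $x+c^it=\lambda^{-1}(\lambda x+c^i\lambda t)$, I would show that as $\lambda\to\infty$, locally uniformly on $\R\times\R$,
\[
u^{i_1}_\lambda(x,t)\longrightarrow a^{i_1}\bigl(-(x+c^{i_1}t)\bigr)_+^{\frac{1}{m-1}}\quad(1\leq i_1\leq l),\qquad u^{i_2}_\lambda(x,t)\longrightarrow a^{i_2}(x+c^{i_2}t)_+^{\frac{1}{m-1}}\quad(l+1\leq i_2\leq k).
\]
The decay to zero on the empty side of each component follows from the uniform $o(1)$ control in the hypothesis combined with the vanishing prefactor $\lambda^{-1/(m-1)}$.

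Next, to pass to the limit in the weak formulation \eqref{eq-identity--of-formula-for-weak-solution}, I would follow the compactness programme of Section 3: localise with a space-time cut-off, invoke Lemma \ref{eq-lemma-L-infty-bound-of-bold-u-absolute-value} for a uniform $L^\infty_{loc}$ bound on $|\bold{u}_\lambda|$, and derive a uniform $H^1_{loc}$ bound on $w_\lambda^{m/2}$ from a cut-off version of the energy estimate \eqref{eq-energy-type-inequality-of-bold-w-equal-standard-norm-of-bold-u}. The resulting subsequential limit $\bold{\overline u}$ is a weak solution of \eqref{eq-main-system-eq-for-travelling-wave-type} on all of $\R\times\R$; by the pointwise identification above, each component has the form $\overline u^{\,i}=\overline g^{\,i}(x+c^it)$ and inherits the strict sign/support structure \eqref{eq-condition-of-g-i-1-first-l-th-functions}--\eqref{eq-condition-of-g-i-1-last-k-l-th-functions} from the explicit limit profiles, with the same partition parameter $l$ and the same speeds $c^1,\dots,c^k$. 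Applying Lemma \ref{thm-condition-of-l-for-existence-of-travelling-wave-type-eq} to $\bold{\overline u}$ then yields both \eqref{eq-only-one-way-of-travelling-like-solutions} and \eqref{eq-equality-of-constants-in-travelling-like-solutions}.

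The main technical obstacle I anticipate is the compactness/limit-passage step on the unbounded domain $\R\times\R$: because the original solution has infinite $L^1$ mass (the asymptotic profiles $a^i|s_i|^{1/(m-1)}$ grow at infinity), the global estimates of Section 2 are unavailable, and one must work exclusively with localised energy inequalities. A companion subtlety is ruling out collapse of the limit to the trivial solution on either side of the free boundary; this is where the strict positivity of the amplitudes $a^i>0$ in the asymptotic profiles is essential, ensuring that $\bold{\overline u}$ genuinely splits into an $l$-group and a $(k-l)$-group rather than degenerating, so that Lemma \ref{thm-condition-of-l-for-existence-of-travelling-wave-type-eq} can be invoked on a nontrivial configuration.
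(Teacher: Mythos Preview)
Your proposal is correct and follows essentially the same approach as the paper: the paper rescales by $u_\epsilon^i(x,t)=\epsilon^{1/(m-1)}u^i(x/\epsilon,t/\epsilon)$ and sends $\epsilon\to0$, which is precisely your scaling under the substitution $\lambda=1/\epsilon$, and then applies Lemma~\ref{thm-condition-of-l-for-existence-of-travelling-wave-type-eq} to the limiting exact travelling wave. If anything you are more careful than the paper, which simply asserts uniform convergence from the hypotheses \eqref{eq-assumption-2-for-single-variable-function-in-1-dimensional-solution}--\eqref{eq-assumption-3-for-single-variable-function-in-1-dimensional-solution} without the localised compactness discussion you outline.
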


\begin{proof}
For each $\epsilon>0$, let
\begin{equation*}
u_{\epsilon}^i(x,t)=\epsilon^{\frac{1}{m-1}}u^i\left(\frac{x}{\epsilon},\frac{t}{\epsilon}\right)\qquad \forall x\in\R,\,\,t\in\R,\,\, 1\leq i\leq k.
\end{equation*}
Then $\bold{u}_{\epsilon}=\left(u^1_{\epsilon},\cdots,u^k_{\epsilon}\right)$ is also a solution of  \eqref{eq-main-system-eq-for-travelling-wave-type} in $\R\times\R$. By \eqref{eq-assumption-2-for-single-variable-function-in-1-dimensional-solution} and \eqref{eq-assumption-3-for-single-variable-function-in-1-dimensional-solution}, the sequence $\left\{\bold{u}_{\epsilon}\right\}_{\epsilon>0}$ converges uniformly in $\R\times\R$ to an 1-dimensional travelling wave solution of \eqref{eq-main-system-eq-for-travelling-wave-type} as $\epsilon\to 0$. Thus by Theorem \ref{thm-condition-of-l-for-existence-of-travelling-wave-type-eq}, \eqref{eq-only-one-way-of-travelling-like-solutions}, \eqref{eq-equality-of-constants-in-travelling-like-solutions} holds and the theorem follows.
\end{proof}

We now can extend the result for the 1-directional travelling wave solution (Theorem \ref{cor-expression-of-travelling-wave-equation-explicit-form}) to the one for solutions whose behaviour at infinity is like the travelling wave.

\begin{cor}
Let $e\in\R^n$ and let $\bold{u}=\left(u^1,\cdots,u^k\right)$ be a 1-directional travelling wave-like wave solution of \eqref{eq-main-system-eq-for-travelling-wave-type} with respect to the direction $e$. Then for each $1\leq i\leq k$ there exist positive constants $\widehat{c}$, $\widetilde{c}$ and $a^i$ such that
\begin{equation*}
\begin{cases}
\begin{aligned}
u^i(x,t)&\to 0 \qquad \qquad \qquad \qquad \mbox{as $x\cdot e-\widehat{c}\,t\to\infty$}\\
u^i(x,t)&\to a^i\left(\widehat{c}t-x\cdot e\right)_+^{\frac{1}{m-1}} \qquad \mbox{as $x\cdot e-\widehat{c}\,t\to-\infty$}
\end{aligned}
\end{cases}
\end{equation*}
or
\begin{equation*}
\begin{cases}
\begin{aligned}
u^i(x,t)&\to a^i\left(\widetilde{c}t+x\cdot e\right)_+^{\frac{1}{m-1}} \qquad  \mbox{as $x\cdot e+\widetilde{c}\,t\to\infty$}\\
u^i(x,t)&\to 0 \qquad \qquad \qquad \qquad \mbox{as $x\cdot e+\widetilde{c}\,t\to-\infty$}.
\end{aligned}
\end{cases}
\end{equation*}
\end{cor}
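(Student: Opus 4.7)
The plan is to reduce this multidimensional asymptotic statement to the one-dimensional travelling wave classification of Theorem \ref{cor-expression-of-travelling-wave-equation-explicit-form} by a parabolic rescaling that zooms out to infinity. Since \eqref{eq-main-system-eq-for-travelling-wave-type} is invariant under the scaling $u^i(x,t)\mapsto u^i_\epsilon(x,t)=\epsilon^{1/(m-1)}u^i(x/\epsilon,\,t/\epsilon)$ (the travelling wave exponent being $1/(m-1)$), it is natural to expect the far-field profile to be a genuine travelling wave. A convenient first step is a rotation of coordinates so that $e$ becomes the first coordinate direction; this reduces the geometry to asking about behaviour as $s=x\cdot e\pm \widehat{c}\,t$ tends to $\pm\infty$ while the transverse coordinates are irrelevant to the conclusion.

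Next, for the rescaled family $\bold{u}_\epsilon=(u^1_\epsilon,\dots,u^k_\epsilon)$ I would verify that each $u^i_\epsilon$ remains a non-negative solution of \eqref{eq-main-system-eq-for-travelling-wave-type} on $\R^n\times\R$ and that the travelling-wave-like asymptotic hypothesis on $\bold{u}$ translates into a global travelling-wave profile in the limit $\epsilon\to 0$. Concretely, the hypothesis that $u^i$ behaves like $a^i(\widehat{c}\,t-x\cdot e)^{1/(m-1)}_+$ or $a^i(\widetilde{c}\,t+x\cdot e)^{1/(m-1)}_+$ at infinity along the $e$-direction, combined with the local H\"older continuity from Theorem \ref{eq-local-continuity-of-solution} and the $L^\infty$-bounds from Lemma \ref{eq-lemma-L-infty-bound-of-bold-u-absolute-value}, yields an equicontinuous family, so by Arzel\`a-Ascoli and a diagonal extraction $\bold{u}_\epsilon$ converges locally uniformly as $\epsilon\to 0$ to some $\bold{u}_\ast=(u^1_\ast,\dots,u^k_\ast)$. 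By passage to the limit in the weak formulation (the same argument as in \eqref{eq-for-weak-concept-of-solution-type-1}--\eqref{eq-for-weak-concept-of-solution-type-1-after-limits}), $\bold{u}_\ast$ is itself a solution of \eqref{eq-main-system-eq-for-travelling-wave-type}; and by the prescribed far-field profiles of $\bold{u}$, the limit $\bold{u}_\ast$ depends only on $s=x\cdot e\pm c\,t$ for some $c>0$, i.e.\ it is a 1-directional travelling wave with respect to $e$.

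Applying Theorem \ref{cor-expression-of-travelling-wave-equation-explicit-form} to $\bold{u}_\ast$ then forces the limit to take one of the two explicit forms $c_i(\widehat{c}\,t-x\cdot e)^{1/(m-1)}_+$ or $\overline{c}_i(\widetilde{c}\,t+x\cdot e)^{1/(m-1)}_+$ with a common $\widehat{c}$ (resp.\ $\widetilde{c}$), and in particular the speeds $c^i$ from \eqref{eq-assumption-2-for-single-variable-function-in-1-dimensional-solution}--\eqref{eq-assumption-3-for-single-variable-function-in-1-dimensional-solution} must all agree and all point in the same direction, exactly as in Theorem \ref{thm-equality-of-speed-at-infty-asymptotic-travelling-wave-equation}. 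Unwinding the scaling, this gives the claimed pointwise asymptotics of $u^i$ with $a^i=c_i$ or $a^i=\overline{c}_i$.

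The step I expect to be the main obstacle is the compactness/passage-to-the-limit for $\bold{u}_\epsilon$ in the multidimensional setting: one must show that the $L^\infty$ and modulus-of-continuity bounds are uniform in $\epsilon$ on arbitrary compact subsets of $\R^n\times\R$, and simultaneously that the prescribed one-dimensional asymptotic profiles along $e$ actually propagate to a travelling wave structure for the limit in $\R^n\times\R$ (rather than leaving non-trivial transverse dependence). Once this compactness and the uniqueness of the possible limiting profile are established, the conclusion follows at once from Theorem \ref{cor-expression-of-travelling-wave-equation-explicit-form} and the scaling invariance.
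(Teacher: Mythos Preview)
Your approach is correct and essentially mirrors the paper's: the paper states this corollary without proof, treating it as an immediate consequence of Theorem~\ref{thm-equality-of-speed-at-infty-asymptotic-travelling-wave-equation} (whose proof uses precisely the rescaling $u^i_\epsilon(x,t)=\epsilon^{1/(m-1)}u^i(x/\epsilon,t/\epsilon)$ and compactness to reduce to the travelling-wave classification) together with the change of coordinates sending $e$ to a coordinate direction. Your additional care about uniform equicontinuity and the transverse dependence is reasonable, but the paper simply asserts uniform convergence to a travelling wave from the asymptotic hypotheses and then invokes Theorem~\ref{cor-expression-of-travelling-wave-equation-explicit-form}.
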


\noindent {\bf Acknowledgement:} 
Ki-Ahm Lee has been supported by Samsung Science \& Technology Foundation (SSTF) under Project Number SSTF-BA1701-03. Ki-Ahm Lee also holds a joint appointment with the Research Institute of Mathematics of Seoul National University. Sunghoon Kim was supported by the Research Fund, 2020 of The Catholic University of Korea.

\end{document}